\theoremstyle{plain}
\newtheorem{theorem}{Theorem}[section]
\newtheorem{lem}[theorem]{Lemma}
\newtheorem{prop}[theorem]{Proposition}
\newtheorem{cor}[theorem]{Corollary}
\theoremstyle{remark}
\newtheorem{rem}[theorem]{Remark}
\theoremstyle{plain}
\newtheorem*{introthm}{Theorem}
\theoremstyle{definition}
\newtheorem{defn}{Definition}[section]
\newtheorem{assump}{Assumption}[section]
\theoremstyle{remark}
\newcommand{\A}{\mathbb{A}}
\newcommand{\Ax}{\mathbb{A}^{\times}}
\newcommand{\Cx}{\mathbb{C}^{\times}}
\newcommand{\C}{\mathbb{C}}
\newcommand{\W}{\mathbb{W}}
\newcommand{\Sch}{\mathcal{S}}
\newcommand{\Pe}{\mathcal{P}}
\newcommand{\Qe}{\mathcal{Q}}
\newcommand{\V}{\mathcal{V}}
\newcommand{\B}{\mathcal{B}}
\newcommand{\I}{\mathcal{I}}
\newcommand{\Ga}{\mathcal{G}}
\newcommand{\GLt}{\operatorname{GL}_{2}}
\newcommand{\GL}{\operatorname{GL}}
\newcommand{\SLt}{\operatorname{SL}_{2}}
\newcommand{\Msymt}{\operatorname{M}^{\mathrm{sym}}_{2}}
\newcommand{\Msym}{\operatorname{M}^{\mathrm{sym}}}
\newcommand{\GSpf}{\operatorname{GSp}_{4}}
\newcommand{\PGSpf}{\operatorname{PGSp}_{4}}
\newcommand{\GSp}{\operatorname{GSp}}
\newcommand{\Spf}{\operatorname{Sp}_{4}}
\newcommand{\Sp}{\operatorname{Sp}}
\newcommand{\GO}{\operatorname{GO}}
\newcommand{\G}{\operatorname{G}}
\newcommand{\GSO}{\operatorname{GSO}}
\newcommand{\Or}{\operatorname{O}}
\newcommand{\SO}{\operatorname{SO}}
\newcommand{\Dx}{D^{\times}}
\newcommand{\Kx}{K^{\times}}
\newcommand{\Ex}{E^{\times}}
\newcommand{\Fx}{F^{\times}}
\newcommand{\Bx}{B^{\times}}
\newcommand{\Lx}{L^{\times}}
\newcommand{\inv}{^{\ast}}
\newcommand{\Tr}{\operatorname{Tr}}
\newcommand{\N}{\operatorname{N}}
\newcommand{\Hom}{\operatorname{Hom}}
\newcommand{\im}{\operatorname{Im}}
\newcommand{\Res}{\operatorname{Res}}
\newcommand{\Ind}{\operatorname{Ind}}
\newcommand{\sgn}{\operatorname{sgn}}
\newcommand{\isom}{\cong}
\newcommand{\pr}{^{\prime}}
\newcommand{\abs}[1]{\lvert #1 \rvert}	
\newcommand{\disc}{\operatorname{disc}}
\newcommand{\Ad}{\operatorname{Ad}}
\newcommand{\std}{\operatorname{std}}
\newcommand{\Vol}{\operatorname{Vol}}
\newcommand{\AI}{\operatorname{AI}}
\newcommand{\supp}{\operatorname{supp}}
\newcommand{\tran}[2]{\tensor[^t]{#1}{^{#2}}}
\newcommand{\dgroup}[1]{\tensor[^{L}]{#1}{}}
\newcommand{\du}{^{\vee}}
\newcommand{\bk}{\backslash}
\newcommand{\BC}{\operatorname{BC}}
\newcommand{\Au}{\mathcal{A}}
\newcommand{\Ao}{\mathcal{A}_{0}}
\newcommand{\Gal}{\operatorname{Gal}}
\newcommand{\weil}{\operatorname{\boldsymbol\omega}}
\newcommand{\Weil}{\operatorname{\boldsymbol\Omega}}
\newcommand{\Set}{\mathfrak{S}}
\newcommand{\redstar}{\textcolor{red}{\bigstar}}
\newcommand{\bluestar}{\textcolor{blue}{\bigstar}}
\newcommand{\emb}{\hookrightarrow}
\newcommand{\emblong}{\ensuremath{\lhook\joinrel\relbar\joinrel\rightarrow}}
\newcommand{\arrup}[2]{\mathrel{\mathop{#1}^{#2}}}
\renewcommand{\tocsection}[3]{%
  \indentlabel{\@ifnotempty{#2}{\ignorespaces#1 #2.~}}#3\dotfill}
\let\oldtocsection=\tocsection
\renewcommand{\tocsection}[2]{\hspace{0em}\oldtocsection{#1}{#2}}
\newcommand{\makeandyabstract}{
\normalfont\normalsize
\@setabstract
}
\newcommand{\ReportTitle}{A proof of the refined Gan--Gross--Prasad conjecture\\ for non-endoscopic Yoshida lifts}
\newcommand{\ShortTitleCaps}{THE REFINED GAN--GROSS--PRASAD CONJECTURE FOR YOSHIDA LIFTS} 
\newcommand{\ReportAuthorName}{Andrew J.~Corbett}
\newcommand{\ReportDate}{29$^{\mathrm{th}}$ January 2016}
\title[\ShortTitleCaps]{\Large\ReportTitle}
\author{\ReportAuthorName}
\date{\ReportDate}
\address{School of Mathematics, University of Bristol, Bristol, BS8 1TW, United Kingdom}
\email{andrew.corbett@bristol.ac.uk}
\numberwithin{equation}{section}
\begin{document}

\pagenumbering{arabic}
\setcounter{page}{1}

\begin{abstract}
We prove a precise formula relating the Bessel period of certain automorphic forms on ${\rm GSp}_{4}(\mathbb{A}_{F})$ to a central $L$-value. This is a special case of the refined Gan--Gross--Prasad conjecture for the groups $({\rm SO}_{5},{\rm SO}_{2})$ as set out by Ichino--Ikeda \cite{ichino_ikeda} and Liu \cite{liu}. This conjecture is deep and hard to prove in full generality; in this paper we succeed in proving the conjecture for forms lifted, via automorphic induction, from ${\rm GL}_{2}(\mathbb{A}_{E})$ where $E$ is a quadratic extension of $F$. The case where $E=F\times F$ has been previously dealt with by Liu \cite{liu}.
\end{abstract}

\maketitle

\tableofcontents

\section{Introduction}

The aim of this paper is to prove a special case of a deep conjectural relation between periods of automorphic forms and central values of $L$-functions. An early prototype of such a result is due to Waldspurger \cite{waldspurger}, who computed toric integrals of automorphic forms on $\GLt$ to be an `Euler-product' of local integrals scaled by a global constant of certain $L$-values. Soon after, Gross--Prasad \cite{gp1} made a series of fascinating conjectures relating periods of $\SO_{n+1}\times\SO_{n}$-forms along $\SO_{n}$ (embedded diagonally) to central $L$-values -- the case $n=2$ is implied by the work of Waldspurger. These conjectures were extended to include all classical groups by Gan--Gross--Prasad \cite{ggp}.


In their original form, the Gross--Prasad conjectures omit a precise description of the factorisation of the global automorphic period. However, a recent work of Liu \cite{liu}, extending that of Ichino--Ikeda \cite{ichino_ikeda}, offers a refined conjecture by giving a precise conjectural formula for the Bessel period of a wide family of automorphic forms in terms of the central values of certain $L$-functions. In its full generality, Liu's conjecture appears out of reach of our current methods, even for specific groups. Nevertheless, one can try to prove special cases of it; Liu himself proved his conjecture in the case of \textit{endoscopic} automorphic forms on $\GSpf$ \cite{liu}, motivated by Prasad--Takloo-Bighash \cite{prasad_takloo-bighash}. These endoscopic forms are classically known as Yoshida lifts and essentially correspond to lifts from $\GLt\times\GLt$.

In this paper we prove such a formula for the \textit{non-endoscopic Yoshida lifts}: the automorphic forms on $\GSpf$ lifted from the \textit{non-split} orthogonal group $\GO_{4}$ (that is, the underlying quadratic space defining $\GO_{4}$ has non-square discriminant).  Making use of exceptional isomorphisms, we see that such forms are obtained by automorphic induction from $\GLt(E)$ where $E$ is a quadratic extension of the base field $F$. (Liu's result covers the \textit{split} case where $E=F\times F$.) For our proof we require both a much finer analysis of the four-dimensional quadratic spaces governing $\GO_{4}$ (of non-square discriminant) and a more detailed construction of the automorphic representations of this group than that found in \cite{liu}. This analysis provides a notable diversion from Liu's method, especially in the final deduction of our explicit formula \S \ref{sec_final}.

Before describing our results in more detail we also remark on a conjecture of B\"ocherer \cite{bocherer_conj} (see also \cite{saha_bc}). In this work B\"ocherer formulates an equality between sums of Fourier coefficients (indexed by ideal classes of a fixed quadratic field $K$) of Siegel modular forms and certain $L$-values. The present paper considers the Bessel period of an automorphic form on $\GSpf(\A)$; if the form in question is the ad\`elisation of a Siegel modular form then (by \cite{furusawa} for example) one computes the Bessel period to be precisely the Fourier coefficients that B\"ocherer considered. Thus our result provides a proof of (a refinement of) B\"ocherer's conjecture for non-endoscopic Yoshida lifts.

\subsection{The Bessel period}

Let $F$ be a (totally real) number field with ad\`ele ring $\A=\A_{F}$. We consider the refined Gan--Gross--Prasad conjecture for the groups $(\SO_{5},\SO_{2})$. In this case we extend $\SO_{2}$ to the \textit{Bessel subgroup} $R=U\rtimes\SO_{2}$, with $R\emb\SO_{5}$, where $U$ is a certain unipotent subgroup of $\SO_{5}$. The conjecture describes the explicit form of a period integral of automorphic forms on $\SO_{5}\times R$ along the (diagonally embedded) subgroup $R$. Our approach to the problem makes use of the exceptional isomorphisms
\begin{equation*}
\SO_{5}\isom\PGSpf\quad {\rm and} \quad\SO_{2}\isom\Res_{K/F}\Kx/\Fx
\end{equation*}
where $K$ is a quadratic field extension of $F$.

More specifically, let $\chi$ be a unitary Hecke character of $\Ax_{K}$, simultaneously thought of as a character of $\SO_{2}(F)\bk \SO_{2}(\A)$, and let $\pi$ be an irreducible, cuspidal automorphic representation of $\GSpf(\A)$ in the space of cusp forms $\V_{\pi}$. Impose the central character condition that $\pi\otimes \chi \vert_{\Ax} =1$. Additionally, make a (standard and inconsequential) choice of automorphic character $\psi$ of $U$ so that $\psi\boxtimes\chi$ is an automorphic character of $R$. We then define the \textit{$\chi$-Bessel period} of $\varphi\in\V_{\pi}$ to be the absolutely convergent integral
\begin{equation}\label{eq_bessel_intro}
\Pe(\varphi,\chi)=\displaystyle\int_{\Ax R(F)\bk R(\A)}\varphi(g)\,(\psi\boxtimes\chi)(g)\,dg\,.
\end{equation}
This integral defines an element of $\Hom_{R(\A)}(\pi\otimes(\psi\boxtimes\chi),\C)$. The unrefined conjecture claims that there exists some vector $\varphi^{\ast}$ in (the Vogan $L$-packet of) $\pi$ such that
\begin{equation*}
\Pe(\varphi^{\ast},\chi)\neq 0 \quad\Longleftrightarrow\quad L(1/2,\pi\boxtimes \chi)\neq 0
\end{equation*}
where $\Pe(\varphi^{\ast},\chi)$ may be defined for more general elements $\varphi^{\ast}$ of the Vogan $L$-packet in a similar way to \eqref{eq_bessel_intro}. It is this unrefined dependence which we make explicit.

To discuss the local side, assume the factorisations $\pi=\otimes_{v}\pi_{v}$ ; $\chi=\otimes_{v}\chi_{v}$ ; $\psi=\otimes_{v}
\psi_{v}$ and suppose that $\varphi=\otimes_{v}\varphi_{v}$. Associated to this data, we follow Liu in defining
\begin{equation*}
\alpha^{\natural}(\varphi_{v},\chi_{v})\in\Hom_{R(F_{v})}(\pi_{v}\otimes(\psi_{v}\boxtimes\chi_{v}),\C)
\end{equation*}
at each place $v$ to be an integral over local matrix coefficients (see \S \ref{sec_local}). Roughly speaking -- up to a normalisation constant (see \eqref{eq_nomalise_complicated}) -- the integral defining $\alpha^{\natural}(\varphi_{v},\chi_{v})$ is equal to
\begin{equation*}
\int_{\Fx_{v}\bk R(F_{v})}\B_{\pi_{v}}( \pi_{v}(g_{v})\varphi_{v},\bar{\varphi}_{v})\,(\chi_{v}\boxtimes\psi_{v})(g_{v})\, dg_{v}
\end{equation*}
where $\B_{\pi_{v}}$ is a local unitary pairing for $\pi_{v}$. The foundation on which Liu is able to generalise the refined conjecture is the regularisation of these integrals. They are shown to converge absolutely and a natural normalisation is found such that $\alpha^{\natural}(\varphi_{v},\chi_{v})=1$ for almost all places $v$ \cite[Theorem 2.1 \& 2.2]{liu}. We may thus make sense of the infinite product $\prod_{v}\alpha^{\natural}(\varphi_{v},\chi_{v})$. The refined Gan--Gross--Prasad conjecture then asks for the constant of proportionality between this product of local factors and the square of the absolute value of the Bessel period.

\subsection{Lifted representations}\label{sec_intro_lifts} The representations of $\SO_{5}(\A)\isom\PGSpf(\A)$ are precisely those representations of $\GSpf(\A)$ with trivial central character. We consider a class of representations of $\PGSpf(\A)$ which are lifted from representations of the group $\GO_{4}(\A)$, when $\GO_{4}$ is non-split, via the theta correspondence for $(\GO_{4},\GSpf)$ -- we call such lifted representations the \textit{non-endoscopic Yoshida lifts}. The domain of this lift comprises of the representations of $\GO_{4}(\A)$ (of trivial central character); these are uniquely determined by representations of $\Dx(\A_{E})$ for a canonical choice of quadratic extension $E/F$ and quaternion algebra $D$ over $F$. Thus, via Jacquet--Langlands transfer, one may view a non-endoscopic Yoshida lift $\pi$ as being of the form $\pi=\AI(\pi')$: the automorphic induction, to $\GSpf(\A)$, of a representation $\pi'$ of $\GLt(\A_{E})$.


\subsection{Main result} We refer the reader to Theorem \ref{thm_andy_thm1} for a more precise statement of our result. To simplify notation here assume the following decompositions for both the Petersson inner product $\B_{\pi}$ on $\pi$ and the Tamagawa measure $dg$ on $\Ax\bk R(\A)$:
\begin{equation}\label{eq_decompositions}
\B_{\pi}=\prod_{v}\B_{\pi_{v}}\,,\qquad dg=\prod_{v}dg_{v}\,
\end{equation}
where $\B_{\pi_{v}}$ and $dg_{v}$ are the local factors used to define $\alpha^{\natural}(\varphi_{v},\chi_{v})$.
\begin{introthm}
Let $\pi=\AI(\pi\pr)$ be a non-endoscopic Yoshida lift to $\PGSpf(\A)$, as per \S \ref{sec_intro_lifts}, where $\pi\pr$ is an irreducible, cuspidal automorphic representation of $\GLt(\A_{E})$ with trivial central character. Let $K$ be a quadratic field extension of $F$ such that $\SO_{2}\isom\Kx/\Fx$. Let $\chi$ be a unitary Hecke character of $\Ax_{K}$ with $\chi\vert_{\Ax}=1$, then $\chi$ is simultaneously an automorphic character of $\SO_{2}(\A)$. Denote by $\chi_{K/F}$ the quadratic character associated to $K$ by class field theory. Assume the choices of \eqref{eq_decompositions} and that the local integrals $\alpha^{\natural}(\varphi_{v},\chi_{v})$ are \textit{properly} normalised (as in Definition \ref{def_properly_normalised}). Then for a cusp form $\varphi=\otimes_{v}\varphi_{v}$ in the space associated to $\pi$ we have

\begin{equation*}
\abs{\Pe(\varphi,\chi)}^{2}=\dfrac{1}{4}\, \dfrac{\zeta_{F}(2)\,\zeta_{F}(4)\,L(1/2,\pi\boxtimes\chi)}{L(1,\pi,\Ad)\,L(1,\chi_{K/F})}\,\prod_{v}\alpha^{\natural}(\varphi_{v},\chi_{v})\,.
\end{equation*}
\end{introthm}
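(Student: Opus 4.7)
The plan is to exploit the description of $\pi$ as an automorphically induced representation, realising it as a theta lift from $\GO_{4}$, in order to reduce the Bessel period to a toric period on $\GLt(\A_{E})$, to which (a refined version of) Waldspurger's formula can be applied. Concretely, I would first write $\varphi$ as a theta integral $\varphi=\theta_{\phi}(\varphi\pr)$ for a suitable Schwartz function $\phi$ and a cusp form $\varphi\pr$ on $\GO_{4}(\A)$; by the very definition of a non-endoscopic Yoshida lift, one may choose $\varphi\pr$ to be attached to the Jacquet--Langlands transfer of $\pi\pr$ to $\Dx(\A_{E})$ for an appropriate quaternion algebra $D/E$. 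Carrying this out requires a careful choice of the four-dimensional quadratic space underlying $\GO_{4}$ -- the ``finer analysis'' alluded to in the introduction -- so that $E/F$ matches the quaternion algebra $D/E$ whose Jacquet--Langlands image carries $\pi\pr$.

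\emph{Unfolding.} I would then substitute this realisation into \eqref{eq_bessel_intro} and exchange the order of integration:
\begin{equation*}
\Pe(\varphi,\chi)\;=\;\int\varphi\pr(h)\left(\int_{\Ax R(F)\bk R(\A)}\theta_{\phi}(g,h)\,(\psi\boxtimes\chi)(g)\,dg\right)dh\,.
\end{equation*}
The Fourier expansion of $\theta_{\phi}$ collapses the integration over $U(F)\bk U(\A)$, while a seesaw argument on the dual pair $(\GSpf\supset R,\;\GO_{4})$ identifies the remaining $\SO_{2}$-integration with a toric period on $\GO_{4}$. The outcome is that $\Pe(\varphi,\chi)$ is rewritten as a global toric period of $\varphi\pr$ over a torus $T\subset \GO_{4}$ arising from the interplay of $K/F$ and $E/F$, twisted by a natural character built from $\chi$.

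\emph{Reduction to $\GLt(\A_{E})$ and Waldspurger.} Invoking the exceptional isomorphism for non-split $\GSO_{4}$ together with Jacquet--Langlands, this toric period decomposes as a Waldspurger-type toric period on $\GLt(\A_{E})$ attached to a torus inside the biquadratic composite $EK/F$. Ichino's refined formula \cite{ichino_ikeda} in the $\GLt$-setting then evaluates the square of this period as the ratio $L(1/2,\pi\boxtimes\chi)/L(1,\pi,\Ad)$, after identifying the relevant $L$-functions via the factorisations coming from automorphic induction and base change (in particular, the adjoint $L$-function splits off a factor $L(s,\chi_{E/F})$ which must be absorbed correctly into the global constant).

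\emph{Local matching and main obstacle.} Finally, at each place $v$ one factors the local contribution of the global unfolding and compares it with the normalisation defining $\alpha^{\natural}(\varphi_{v},\chi_{v})$; the Tamagawa volume factors then conspire to produce the constant $\tfrac{1}{4}\,\zeta_{F}(2)\zeta_{F}(4)/L(1,\chi_{K/F})$. The principal obstacle, and what most distinguishes this argument from Liu's split-case treatment, lies in the unfolding and reduction: one must delicately track how the quadratic character $\chi_{K/F}$ interacts with the discriminant character of $\GO_{4}$ (i.e.~that of $E/F$) in the unfolded torus, and treat separately the case-dependent scenarios of how $K$ and $E$ sit relative to each other, before producing a uniform statement as in \S \ref{sec_final}.
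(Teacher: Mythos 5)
Your proposal follows the same overall route as the paper: realise $\varphi=\theta(f,\phi)$ as a theta lift from $\GO_{4}$, unfold the Bessel period (the orthogonality over $U(F)\backslash U(\A)$ imposes $M_{x}=S$ and restricts the theta kernel's sum to $X^{2}_{S}(F)$), use the exceptional isomorphism $\GSO(X)\isom\Fx\times\Bx/\Delta\Ex$ together with the identifications $W_{1}\isom X_{L}^{\perp}\isom K$ to reduce the remaining integration to a toric period on $\Bx(\A_{E})$, and then invoke Waldspurger's refined formula followed by local matching. This is precisely the chain carried out in \S\ref{sec_global}--\S\ref{sec_final}.

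Where I would push back is on your identification of the ``principal obstacle.'' The interaction between $K$, $E$, and the discriminant of $\GO_{4}$ is handled cleanly by Lemmas \ref{lem_W1_isom_K}, \ref{lem_K_subspace_X} and Proposition \ref{prop_isom_Phi}. The genuinely delicate step --- and the true source of the constant $1/4$ --- is a $\mu_{2}$ bookkeeping that your sketch elides. Since $\sigma$ lives on $H=\GO(X)=\GSO(X)\rtimes\mu_{2}$ while Waldspurger's formula involves data attached to $\Bx(\A_{E})$ at the $\GSO(X)$ level, the paper introduces the variant theta integral $\theta^{0}$ over $H_{1}^{0}$ (\S\ref{sec_cal_in_var}) and expands $\theta(f,\phi)$ as an average over $\mu_{2}(F)\backslash\mu_{2}(\A)$; one must then carefully track, in \S\ref{sec_reform_form}, how the $\mu_{2}$-coset summation interacts with the ramification set $S$ and the set $\Set=\{v:\sigma_{v}\isom\sigma_{v}\otimes\sgn\}$, using the support Lemma \eqref{eq_gi_lem2.2} and the regular/invariant/distinguished classification of \S\ref{sec_autm_reps_H}. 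The resulting powers of $2$, combined with Waldspurger's own $\tfrac12$ and the Tamagawa number $\B_{\chi}=\Vol(\Ax\Kx\backslash\Ax_{K})=2$, yield $\tfrac14$, while the factor $L(1,\sigma,\std)/\zeta_{F}(2)\zeta_{F}(4)$ cancels against Gan--Ichino's Rallis-inner-product evaluation of $\B_{\theta(\sigma)}$ in \eqref{eq_GI_global_theta_pair}. Attributing the constant to ``Tamagawa volume factors'' alone would not explain why one obtains $1/4$ here versus Liu's $1/8$ in the split case; the difference is that the Waldspurger period occurs once rather than twice, a direct consequence of the $\GO_{4}/\GSO_{4}$ representation theory that your proposal does not flag.
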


\subsection{Remarks}

The case where $E=F\times F$ is dealt with by Liu \cite[\S 4]{liu}. Liu's theorem determines the Bessel period attached to an automorphic form on $\GSpf$ which is a lift from $\GLt\times\GLt$. These lifts are precisely the  \textit{endoscopic} representations of $\GSpf$. Moreover, Qui has proved a formula for $\abs{\Pe(\varphi,\chi)}^{2}$ when $\pi$ is in the \textit{nontempered} cuspidal spectrum of $\SO_{5}$ (see \cite{qiu}). This is achieved by considering the so-called Saito--Kurukawa and Soudry lifts.

Following these two works, this paper uses the functorial lift from $\GLt(E)$ to give a wide class of nonendoscopic, tempered, cuspidal automorphic representations of $\PGSpf$ that conform to the refined Gan--Gross--Prasad conjecture. Further works on attempting to prove such a formula in general have been approached by using tools such as relative trace formulae (see \cite{furusawa_mem} for example).

The assumption that $F$ is a totally real number field is needed only to permit the application of a result of \cite{gan_ichino} on the Petersson inner product of a theta lift; they, in turn, only require this assumption to use the Siegel--Weil formula in their calculation.

Finally we would like to highlight the occurrence of the constant $1/4$ in our formula, to be compared with the constant $1/8$ appearing in \cite{liu}. This falls in line with the general conjecture of Liu \cite{liu} in that it relates precisely to the (conjectural) Arthur parameters of $\pi$ and $\chi$ (as first pointed out by Ichino--Ikeda \cite[\S 2]{ichino_ikeda} and then by Gan--Ichino \cite[Remark 1.2]{gan_ichino}). Specifically, the constant should be $\frac{1}{\abs{\mathcal{S}_{\pi}}\abs{\mathcal{S}_{\chi}}}$ where $\mathcal{S}_{\pi}$ (resp. $\mathcal{S}_{\chi}$) is the centraliser of the image of the Arthur parameter of $\pi$ (resp. $\chi$); note that in our case we trivially have $\abs{\mathcal{S}_{\chi}}=2$. The discrepancy of $1/2$ between our result and that of \cite{liu} is supported by the observation that
\begin{equation*}
\abs{\mathcal{S}_{\pi}}=\left\lbrace\begin{array}{cl}\vspace{0.05in}
4 & {\rm if}\,\, E=F\times F,\\
2 & {\rm if}\,\, E=F(\sqrt{e}\,)\,\, {\rm for\,\,some}\,\,e\not\in(\Fx)^{2}.
\end{array}\right.
\end{equation*}
It is interesting to see this factor arise naturally due to the structure of the representations of $\GO_{4}(\A)$: in \cite{liu} the Bessel period boils down to twice the period considered by Waldspurger \cite{waldspurger} in contrast to the single occurrence that we observe in our computation.

This paper is set out as follows: after some preliminary definitions regarding the Bessel period (\S \ref{sec_prelim}) we review the theta correspondence for $(\GO_{4},\GSpf)$ (\S \ref{sec_theta_section}) and discuss the representation theory of $\GO_{4}$ (\S \ref{sec_autm_reps_H}), explaining the lift we use and its domain. We then analyse the global (\S \ref{sec_global}) and local (\S \ref{sec_local}) periods before uniting these quantities (\S \ref{sec_final}) via a theorem of Waldspurger and proving the result at hand.

\subsection*{Acknowledgements}

The author would like to offer sincere thanks to both Yifeng Liu, for his helpful comments and discussions, and Abhishek Saha, for his valuable guidance. Thanks are also due to Katharine Thornton for her many insightful suggestions.

\section{Preliminary Discussion}\label{sec_prelim}

\subsection{Some conventions} We work over a fixed number field $F$ which we assume to be totally real. Put $\mathcal{O}$ for the ring of integers of $F$ and $\A$ for the ring of $F$-ad\`eles. Given an extension $L\supset F$ let $\A_{L}=\A\otimes_{F} L$.

If $G$ is a linear algebraic group defined over $F$ and $R$ is an $F$-algebra write $G(R)$ for the $R$-points of $G$. At a place $v$ of $F$ simplify the notation $G(F_{v})$ to $G_{v}$. Given a function $f$ on $G$, denote left and right translation by elements $g\in G$ by
\begin{equation*}
L(g)f(x)=f(g^{-1}x)\quad {\rm and}\quad R(g)f(x)=f(xg)\,.
\end{equation*}

If $S$ is a finite set of places of $F$ then introduce the following notation: $F_{S}=\prod_{v\in S}F_{v}$ and $\A^{S}=\prod_{v\not\in S}\pr F_{v}$. Note the compatibility of the products $G(F_{S})=\prod_{v\in S}G(F_{v})$ and $G(\A^{S})=\prod_{v\not\in S}\pr G(F_{v})$ meaning that we can formally identify $G(\A)=G(F_{S})G(\A^{S})$.

\subsubsection{Measures}

For an algebraic group $G$ we fix a Haar measure on $G(\A)$ by taking the Tamagawa measure $dg$ (as originally defined in \cite{weil_tam}). Let $dg_{v}$ be a specified choice of local Haar measures on $G_{v}$ for each $v$ such that $\prod_{v} dg_{v}$ is a well defined measure on $G(\A)$. By the uniqueness of Haar measures there exists a constant of proportionality $C\in\C$ such that $dg=C\prod_{v} dg_{v}$. We call such a $C$ \textit{Haar measure constant}, as in \cite{ichino_ikeda}.

\subsubsection{Automorphic representations and pairings} The space of automorphic (resp. cusp) forms on $G(\A)$ shall be denoted $\Au(G)$ (resp. $\Ao(G)$). For an irreducible, cuspidal automorphic representation $\pi$ of $G(\A)$ we denote by $\V_{\pi}$ the realisation of $\pi$ in $\Ao(G)$ and put $\omega_{\pi}$ for its central character. One has $\pi\isom\otimes_{v}\pi_{v}$ (and $\V_{\pi}\isom\otimes^{\prime}_{v}\V_{\pi_{v}}$) where at each place $v$ of $F$, $\pi_{v}$ is an irreducible, admissible, unitary representation of $G_{v}$ on $\V_{\pi_{v}}$. Let $\bar{\pi}$ denote the \textit{conjugate} representation of $\pi$ realised on the space
\[\V_{\bar{\pi}}=\left\lbrace \bar{f}\,:\,f\in\V_{\pi}\right\rbrace.\]
There is a canonical bilinear pairing $\B_{\pi}\colon\V_{\pi}\otimes\V_{\bar{\pi}}\rightarrow\C$ given by the Petersson inner product
\begin{equation*}
\B_{\pi}(f,\tilde{f})=\displaystyle\int_{Z_{G}(\A)G(F)\bk G(\A)}f(g)\tilde{f}(g)dg
\end{equation*}
where $Z_{G}$ is the maximal split torus in the centre of $G$ and $dg$ is the Tamagawa measure on $(Z_{G}\bk G)(\A)$ as always. In particular, since $\V_{\pi}$ is a complex Hilbert space and $\pi$ is unitary, one can show that $\bar{\pi}$ is isomorphic to $\pi\du$, the \textit{contragredient representation} of $\pi$ realised on the space of smooth vectors in the dual space $\V_{\pi}\du$ of $\V_{\pi}$. Moreover, any pairing on a unitary Hilbert space representation is unique up to a scalar factor. Both of these facts are corollaries to the Riesz representation theorem. Throughout, any local, irreducible, admissible representation of $G_{v}$ is always considered to be unitary.


\subsubsection{$L$-functions}\label{sec_l_functions} Given a representation $r$ of the Langlands dual group and an automorphic representation $\pi$ of $G$ we have the Langlands $L$-function $L(s,\pi,r)$. When $r$ is the standard representation of the dual group, which we assume is a subgroup of $\GL_{n}(\C)$, we write $L(s,\pi)$ for $L(s,\pi,r)$. The notation $\pi_{1}\boxtimes\pi_{2}$ denotes the (external tensor product) representation of the direct product $G_{1}\times G_{2}$, where $\pi_{i}$ are representations of the groups $G_{i}$ for $i=1,2$, respectively.

The most interesting $L$-function for us is given as follows. Let $\pi$ be an automorphic representation of $\PGSpf(\A)\isom\SO_{5}$ and let $\chi$ be a character of $\SO_{2}(F)\bk \SO_{2}(\A)$ corresponding to a Hecke character of $\Ax_{K}$ as in the introduction. Then we consider the $\SO_{5}\times\SO_{2}$ $L$-function $L(s, \pi\boxtimes\chi)$. However, other authors interpret this $L$-function as:

\begin{itemize}\vspace{0.05in}
\item the $\GSpf\times\GLt$ $L$-function $L(s,\pi\boxtimes\AI(\chi))$, where $\AI(\chi)$ is the automorphic induction of $\chi$ from $\Ax_{K}$ to $\GLt(\A)$, or,\vspace{0.05in}
\item the $\GSpf(K)$ $L$-function $L(s,\BC(\pi)\otimes\chi)$, where $\BC(\pi)$ is the base change of $\pi$ from $\GSpf(\A_{F})$ to $\GSpf(\A_{K})$.\vspace{0.05in}
\end{itemize}
Each of these representations arises due to a functorial transfer from the original representation $\pi\boxtimes\chi$. The characteristic property of such a transfer implies that these $L$-functions are indeed all equal.

Other notation includes: $\zeta_{F}$, the Dedekind zeta function for a number field $F$, and $\chi_{K/F}$ which always denotes the quadratic character of $\Kx$ given by class field theory. Note that for any Hecke character $\chi$ of $\Ax_{K}$, the adjoint $L$-function is trivially $L(s,\chi,\Ad)=L(s,\chi_{K/F})$.

\subsubsection{Quadratic spaces}\label{sec_quad_spaces} Let $(V,q)$ be a quadratic space over $F$ of even dimension $2m$ (we always assume such a $V$ is non-degenerate). The quadratic form $q$ corresponds to a symmetric matrix $S_{q}\in\Msym_{2m}(F)$ such that $q(v)=\tran{v}{}S_{q}v$ for $v\in V$. One defines the \textit{discriminant} of $V$ to be $\disc V = (-1)^{m}\,\det S_{q}$ and the associated \textit{discriminant algebra} as
\begin{equation}\label{eq_discalg}
K_{V}=\left\lbrace\begin{array}{ll}\vspace{0.05in}
 F(\sqrt{\disc V}\,)&{\rm if}\,\, \disc V\not\in (\Fx)^{2}\\
 F\times F &{\rm if}\,\, \disc V\in (\Fx)^{2}\,.
\end{array}\right.\vspace{0.05in}
\end{equation}
We intend to study the \textit{orthogonal similitude group} of $V$:

\begin{equation*}
\begin{array}{rcl}\vspace{0.1in}
\GO(V)&=&\left\lbrace\, g\in\GL(V)\, :\, q(gv)=\lambda(g)q(v)\,\,\forall\,v\in V\,\right\rbrace\\\vspace{0.05in}
&=&\left\lbrace\, g\in\GL_{2m}(F)\, :\, \tran{g}{}S_{q}g=\lambda(g)S_{q}\,\right\rbrace
\end{array}
\end{equation*}
where $\lambda\colon\GO(V)\rightarrow \Fx$ is the \textit{similitude character}. One observes that $(\det g)^{2} = \lambda(g)^{2m}$, so there is a natural \textit{sign character} on $\GO(V)$:
\begin{equation*}
\sgn\colon g\longmapsto \det g/\lambda(g)^{m}\,\in\mu_{2}
\end{equation*}
where $\mu_{2}=\mu_{2}(F)$. We define the connected component of $\GO(V)$ to be the normal subgroup $\GSO(V)=\ker(\sgn)$ which sits in the exact sequence

\begin{equation*}
1\longrightarrow\GSO(V)\longrightarrow\GO(V)\arrup{\longrightarrow}{\sgn\,\,}\mu_{2}\longrightarrow 1\,.
\end{equation*}
Similarly, if one defines the classical \textit{orthogonal group} $\Or(V)=\ker(\lambda)$, then the \textit{special orthogonal group} $\SO(V)$ is found in the exact sequence

\begin{equation*}
1\longrightarrow\SO(V)\longrightarrow\Or(V)\arrup{\longrightarrow}{\det\,\,}\mu_{2}\longrightarrow 1
\end{equation*}
where $\det=\sgn$ here. When $\dim V=4$ we see later in \S \ref{sec_fourdim} that the sign character is surjective and we exhibit a natural choice of representatives for $\GO(V)/\GSO(V)$. In essence, there is a unique element $\iota\in\GO(V)$ with
\begin{equation}\label{eq_first_mention_iota}
\lambda(\iota)=1\,;\qquad\iota^{2}=1\,;\qquad \det\iota=-1\,.
\end{equation}
We are then able to fix a splitting such that $\mu_{2}$ is identified with the subgroup of $\GO(V)$ generated by $\iota$. In particular we arrive at the decomposition $\GO(V) = \GSO(V)\rtimes \mu_{2}$.

\begin{rem}
For an $F$-algebra $A$, the above comments apply more generally to the exact sequence
\begin{equation*}
1\longrightarrow \GSO(V)(A)\longrightarrow \GO(V)(A)\arrup{\longrightarrow}{\sgn\,\,}\mu_{2}(A)\longrightarrow 1\vspace{0.07in}
\end{equation*}
where the $A$ points of $\GSO(V)$ coincide with the kernel of the sign function on $\GO(V)(A)$. In particular we have a well defined notion of $\mu_{2}(\A)$, $\GSO(V)(\A)$, $\GSO(V)_{v}$ and so on.
\end{rem}

\subsection{The Bessel period and definitions}\label{sec_defn_gps_period}

\subsubsection{$\GSpf(F)$ in coordinates}\label{sec_GSpf_in_coord} Let $W=F^{4}$ and endow $W$ with an antisymmetric bilinear form $(\cdot,\cdot)_{W}$ so that $W$ becomes a four-dimensional symplectic vector space over $F$. In the coordinates of $F^{4}$ one may choose
\[(u,v)_{W}=\tran{u}{}
\begin{pmatrix}
0&1_{2}\\
-1_{2}&0
\end{pmatrix}v\,\]
where $1_{2}$ is the $2\times 2$ identity matrix. Setting $W_{1}=F^{2}$ then $W=W_{1}\oplus W_{1}\du$ gives a complete polarisation of $W$ such that $W_{1}\du$ is identified with the dual space of $W_{1}$ under the form $(\cdot,\cdot)_{W}$. Recall the definition for the \textit{symplectic similitude group}:
\[\GSpf(F)=\GSp(W)=\left\lbrace \, g\in\GL(W)\,:\,(gu,gv)_{W}=\lambda(g)(u,v)_{W}\quad\forall\,\,u,v\in W\right\rbrace\,\]
where $\lambda(g)\in\Fx$. We use $\lambda$ for the similitude character of any similitude group.

\subsubsection{The torus}\label{sec_the_torus}

Fix a choice of anisotropic, symmetric matrix

\begin{equation*}
S=\begin{pmatrix}a&b/2\\
b/2&c
\end{pmatrix}
\in\Msymt(F)\vspace{0.07in}
\end{equation*}
to represent the quadratic form $q_{S}(v)=\tran{v}{}Sv$ for $v\in W_{1}$. Then $(W_{1},q_{S})$ is a two-dimensional quadratic space over $F$ of (scaled) discriminant
\begin{equation*}
d=-4\det S=b^{2}-4ac\,.
\end{equation*}
By the anisotropy of $S$ (that $q_{S}(v)=0\Rightarrow v=0$) it is clear that $d$ is not a square in $F$. Hence the discriminant algebra $K_{W_{1}}=F(\sqrt{d}\,)$ is a quadratic field extension of $F$. Fix the notation $K=K_{W_{1}}$. We consider a maximal, non-split torus in $\GLt(F)$ given by the orthogonal group
\begin{equation*}
T=T_{S}=\left\lbrace g\in\GL_{2}(F)\, \colon\, \tran{g}{}Sg=(\det g)S\right\rbrace=\GSO(W_{1})\,.
\end{equation*}
One has the isomorphism $T\isom\Res_{K/F}\Kx$ of algebraic groups over $F$. Specifically, one shows that
\begin{equation*}
T(F)=\left\lbrace \, x+y \left(\begin{smallmatrix}
b/2&c\\
-a&-b/2
\end{smallmatrix}\right)\,:\,x,y\in F\right\rbrace^{\times}\vspace{0.07in}
\end{equation*}
and defines an isomorphism $T(F) \rightarrow \Kx=F(\sqrt{d}\,)^{\times}$ by

\begin{equation*}\begin{array}{c}
x+y \left(\begin{smallmatrix}
b/2&c\\
-a&-b/2
\end{smallmatrix}\right)\longmapsto x+y\frac{\sqrt{d}\,}{2}\,.
\end{array}
\end{equation*}

\subsubsection{The Bessel subgroup} Consider the following subgroups of $\GSpf(F)$:

\begin{itemize}\vspace{0.05in}

\item Let $U$ be the \textit{unipotent radical} stabilising the flag $\lbrace 0 \rbrace \subset W_{1} \subset W$; explicitly we have
\begin{equation*}
U=\left\lbrace u(A)=\begin{pmatrix}
1_{2}&A\\
0&1_{2}
\end{pmatrix}\, \colon\, A\in \Msymt(F)\right\rbrace\,.\vspace{0.07in}
\end{equation*}
All elements of $U$ have similitude $\lambda(u(A))=1$. We also identify $U$ with the space of symmetric $F$-linear maps $W_{1}\du\rightarrow W_{1}$. Taking the standard additive character
\begin{equation}\label{eq_std_char}
\psi\colon F\bk\A\longrightarrow\Cx\,,
\end{equation}
we define a character $\psi_{M}$ of $U(F)\bk U(\A)$, for a matrix $M\in\Msymt(F)$, by
\begin{equation}\label{eq_uni_char_def}
\psi_{M}(u(A))=\psi(\Tr(MA))\,.
\end{equation}
All characters of $U$ arise in this way for some $M$.\vspace{0.05in}

\item One has an embedding $T\emb\GSp(W)$ by mapping $g\in T$ to

\begin{equation*}
\hat{g}=\begin{pmatrix}
g&\\
&(\det g)\, \tran{g}{-1}
\end{pmatrix}\in \GSp(W)\,.\vspace{0.07in}
\end{equation*}
This element has similitude factor $\lambda(\hat{g})=\det g$. Moreover if $u\in U$ then $ug=gu$.\vspace{0.05in}

\item The \textit{Bessel subgroup} of $\GSpf(F)$ is then the semidirect product
\begin{equation*}
R=U\rtimes T\,.
\end{equation*}
\end{itemize}

\subsubsection{The Bessel period}\label{sec_bessel_period} Let $\pi$ be an automorphic representation of $\GSpf(\A)$. All automorphic representations of the abelian group $T(\A)$ are given by characters
\begin{equation*}
\chi\colon T(F)\bk T(\A)\longrightarrow\Cx\,,
\end{equation*}
of which we now fix a $\chi$ such that $\omega_{\pi}\cdot \chi \vert_{\Ax} =1$. We shall simultaneously think of $\chi$ as a character of $\Kx \bk \Ax_{K}$. For $\varphi_{\pi}\in\V_{\pi}$, the \textit{Bessel period} of $\varphi_{\pi}$ (with respect to $\chi$) is defined by the period integral

\begin{equation}\label{eq_bessel_def}
\Pe(\varphi_{\pi},\chi)=\displaystyle\int_{\Ax T(F)\bk T(\A)}\int_{U(F)\bk U(\A)}\varphi_{\pi}(u\hat{g})\,\chi(g)\,\psi^{-1}_{S}(u)\,du\,dg
\end{equation}
where $du$ and $dt$ are the Tamagawa measures on $U(\A)$ and $\Ax\bk T(\A)$ respectively. We realise $\Ax$ as the scalar matrices in the domain of integration $\Ax R(F)\bk R(\A)$.

\subsection{Notation for groups}\label{sec_groups_notation}

For a fixed four-dimensional quadratic space $V$ over $F$ and the four-dimensional symplectic vector space $W=F^{4}$ (from \S \ref{sec_GSpf_in_coord}) assign the notation
\begin{equation*}
\begin{array}{lll}\vspace{0.05in}
G=\GSp(W)&H=\GO(V)&H^{0}=\GSO(V)\\
G_{1}=\Sp(W)&H_{1}=\Or(V)&H_{1}^{0}=\SO(V)
\end{array}
\end{equation*}
which will be used freely throughout. Also define the groups
\begin{equation*}
Y=\G(\Sp(W)\times\Or(V))=\lbrace\, (g,h)\in \GSp(W)\times\GO(V)\,:\, \lambda(g)=\lambda(h)\,\rbrace
\end{equation*}
and
\begin{equation*}
G^{+}=\left\lbrace\,g\in G\,:\,\lambda(g) = \lambda(h)\,\, \mbox{for some}\,\, h\in H\,\right\rbrace.
\end{equation*}

\section{The Theta Correspondence for $(\GO_{4},\GSpf)$}\label{sec_theta_section}

This section is devoted to constructing certain representations of $\GSpf$ from representations of $\GO_{4}$ both locally and globally.

\subsection{The local theta correspondence}\label{sec_local_theta}

Let $v$ be a place of $F$ and omit the subscript $v$ from the notation in this section ($F=F_{v}$, $G=G(F_{v})$, $W=W\otimes_{F}F_{v}$ and so on). Define the space $\W=W\otimes V$ which is given the symplectic form $(\cdot\,,\cdot)_{\W}=(\cdot\,,\cdot)_{W}\otimes(\cdot\,,\cdot)_{V}$. Then groups $G_{1}$ and $H_{1}$ form a \textit{reductive dual pair} as subgroups of $\Sp(\W)$. The polarisation of $W=W_{1}\oplus W_{1}\du$ induces a polarisation
\begin{equation*}
\W = (W_{1}\otimes V)\oplus (W_{1}\du\otimes V)
\end{equation*}
on which we make some remarks:

\begin{itemize}
\item Having chosen the natural basis for $W$ we may identify $W_{1}\du\otimes V \isom V^{2}$.\vspace{0.05in}
\item There is an isomorphism $W_{1}\du\otimes V\isom \Hom_{F}(W_{1},V)$.
\end{itemize}
(These comments are also relevant in the global setting, considering the ad\`elic points of the above spaces.)

Choose a non-trivial additive character $\psi$ of $F$ by taking it to be a local component of the standard (additive) ad\`elic character \eqref{eq_std_char}. Let $\weil=\weil_{\psi}$ be the \textit{Weil representation} of $G_{1}\times H_{1}$, with respect to $\psi$, which may be extended to a representation of $Y$ as in \cite[p.~82]{harris_kudla}. We realise $\weil$ in the space of Schwartz functions $\V_{\weil}=\Sch(V^{2})$ where $Y$ acts as follows. For $(g,h)\in G_{1}\times H_{1}$ and $\phi\in \Sch(V^{2})$:

\begin{equation}\label{eq_schro_mod}
\begin{array}{rcl}\vspace{0.1in}
\weil(1,h)\,\phi(x)&=&\phi(h^{-1}x)\\\vspace{0.1in}	
\weil(J_{2},1)\,\phi(x)&=&\gamma_{4}\,\hat{\phi}(x)	\\\vspace{0.1in}
\weil(u(A),1)\,\phi(x)&=&\psi(\Tr(M_{x}A)) \,\phi(x)\\\vspace{0.1in}								
\weil(m(B),1)\,\phi(x)&=&\chi_{V}(\det B)\,\abs{\det B}_{F}^{2} \,\phi(xB)\\
\end{array}
\end{equation}
where the elements

\begin{equation*}
J_{2}=\begin{pmatrix}
0&1_{2}\\
-1_{2}&0
\end{pmatrix}\,;\quad
u(A)=\begin{pmatrix}
1_{2}&A\\
0&1_{2}
\end{pmatrix}\,;\quad
m(B)=\begin{pmatrix}
B&0\\
0&\tran{B}{^{-1}}
\end{pmatrix}
\end{equation*}
generate $G_{1}=\Sp(W)$ where $A\in\Hom(W_{1}\du,W_{1})$ and $B\in\GL(W_{1})$. The character $\chi_{V}(\det B)$ is the quadratic character of $F^{\times}$; it is defined using the Hilbert symbol. The action of the unipotent group $U$ is dependent on the \textit{Gram matrix} of $x=\tran{(x_{1},x_{2})}{}\in V^{2}$ defined to be
\begin{equation*}
M_{x}=\big(\,(x_{i},x_{j})_{V}\,\big)_{ i,j}\,.
\end{equation*}
We define the character $\psi_{S}(u(A))=\psi(\Tr(SA))$. We also have that $\gamma_{4}\in\mu_{4}$ is a certain fourth root of unity and $\hat{\phi}$ is the Fourier transform of the Schwartz function $\phi$ (see \cite[\S 1]{roberts} for more details on this action). As in \cite{harris_kudla}, the extended action of $\weil$ to $Y$ is obtained by taking $(g,h)\in Y$, $\phi(x)\in\Sch(V^{2})$ and setting:
\begin{equation}\label{eq_weil_extn}
\weil(g,h)\,\phi(x)=\abs{\lambda(h)}_{F}^{-2}\,\weil(g_{1},1)\,\phi(h^{-1}x)
\end{equation}
where
\begin{equation*}
g_{1}=g\begin{pmatrix}
1_{2}&0\\
0&\lambda(g)^{-1}1_{2}
\end{pmatrix}\in G_{1}\,.
\end{equation*}

We now closely follow \cite[\S 5]{gan_ichino}. Define the induced Weil representation by compact induction:
\begin{equation*}
\Weil=\Ind_{R}^{H\times G^{+}}(\weil)\,.
\end{equation*}
If $\sigma$ is an irreducible, unitary, admissible representation of $H$ and $\bar{\sigma}$ is the conjugate representation of $\sigma$ then the maximal $\bar{\sigma}$-isotypic quotient of $\Weil$ is given by $\Weil\,/\cap\ker(\Psi)$ where $\Psi$ runs over $\Hom_{H}(\Weil,\bar{\sigma})$. This is a $\bar{\sigma}$-isotypic direct sum as an $H$-representation. Since $G^{+}$ naturally commutes with $H$ in $G^{+}\times H$, the space of $\Weil\,/\cap\ker(\Psi)$ inherits an action of $G^{+}$ and as a representation of $G^{+}\times H$ thus we may write

\begin{equation*}
\Weil\,/\cap\ker(\Psi)=\bar{\sigma}\boxtimes\Theta^{+}(\sigma)
\end{equation*}
where $\Theta^{+}(\sigma)$ is a smooth representation of $G^{+}$. We call $\Theta^{+}(\sigma)$ the \textit{big theta lift} of $\sigma$ to $G^{+}$. Whilst $\Theta^{+}(\sigma)$ may be zero, it is known that if this is not the case then $\Theta^{+}(\sigma)$ is of finite length, and hence is admissible, and has a unique, maximal, irreducible quotient \cite[Theorem A.1]{gan_ichino} which we denote $\theta^{+}(\sigma)$. This allow us to finally define the (\textit{local}) \textit{theta lift} of $\sigma$ to $G$ as

\begin{equation*}
\theta(\sigma)=\Ind_{G^{+}}^{G}(\theta^{+}(\sigma))\,.
\end{equation*}
By \cite[Lemma 5.2]{gan_ichino}, if $\sigma$ is non-zero and unitary\footnote{This is indeed the case when $\sigma$ is a local component of an irreducible, unitary, cuspidal automorphic representation of $H(\A)$ with a non-zero, cuspidal \textit{global} theta lift to $G(\A)$.} then $\theta(\sigma)$ is an irreducible representation of $G$. We obtain a unique (up to scalar) $Y$-equivariant, surjective map

\begin{equation}\label{eq_unique_map}
\theta\colon \V_{\sigma}\otimes\V_{\weil}\longrightarrow\V_{\theta(\sigma)}\,.
\end{equation}

\begin{rem}
That $\theta^{+}(\sigma)$ exists as a unique, maximal, irreducible representation is in fact the statement of the local Howe conjectures.
\end{rem}

\subsection{The global theta correspondence}\label{sec_theta_def}

In this section we return to our original notation where $F$ is a number field. The following construction follows \cite[\S 7.2]{gan_ichino}.

We have the fixed, non-trivial, additive character $\psi=\otimes_{v}\psi_{v}$ of $\A/F$ (chosen in \eqref{eq_std_char}). For each place $v$ of $F$ we let $\weil_{v}=\weil_{\psi_{v}}$ be the Weil representation of $Y(F_{v})$, with respect to $\psi_{v}$, realised in the Schwartz space $\V_{\weil_{v}}=\Sch(V^{2}(F_{v}))$. Let $\B_{\weil_{v}}\colon \V_{\weil_{v}}\otimes\,\V_{\bar{\weil}_{v}}\rightarrow \C$ be the canonical pairing defined by
\begin{equation*}
\B_{\weil_{v}}(\phi,\tilde{\phi})=\int_{V^{2}(F_{v})}\phi(x)\,\tilde{\phi}(x)\,dx\,.
\end{equation*}
The Weil representation of $Y(\A)$ is given by $\weil=\otimes_{v}\weil_{v}$, and comes equipped with the decomposable unitary pairing $\B_{\weil}=\prod_{v}\B_{\weil_{v}}$. The action of $\weil$ in $\V_{\weil}=\bigotimes_{v}\Sch(V^{2}(F_{v}))$ is applied place-by-place using the local action in \eqref{eq_schro_mod} and \eqref{eq_weil_extn}.

The global theta correspondence, in our setting, provides a cuspidal automorphic form on $G(\A)$ from one on $H(\A)$. We define this cusp form now. For a Schwartz function $\phi\in\V_{\weil}$ we note that the series
\begin{equation*}
\sum_{x\in V^{2}(F)}\weil(g,h)\phi(x)
\end{equation*}
is a smooth function on $(g,h)\in Y(F)\bk Y(\A)$ of moderate growth.

\begin{defn} Let $\sigma$ be an irreducible, cuspidal automorphic representation of $H(\A)$ and let $\phi\in\V_{\weil}$. Then for any $f\in\V_{\sigma}\subset\Ao(H)$ we define the \textit{theta integral}

\begin{equation}\label{eq_theta_int}
\theta(f,\phi;g)=\int_{H_{1}(F)\bk H_{1}(\A)}\,\sum_{x\in V^{2}(F)}\weil(g,hh_{g})\phi(x)\,f(hh_{g})\,dh
\end{equation}
where $h_{g}$ is any element in $H(\A)$ such that $\lambda(h_{g})=\lambda(g)$.
\end{defn}
 This integral is absolutely convergent and independent of the choice $h_{g}$ since all such elements are of the form $h_{g}h_{0}$ for $h_{0}\in H_{1}(\A)$. One computes the central character of $\theta(f,\phi)$ to be equal to $\omega_{\sigma}$, the central character of $f$ (since $\dim V=4$ is even).
 
By construction, $\theta(f,\phi)$ is a function on $G^{+}(F)\bk G^{+}(\A)$. By the natural inclusion of $G^{+}\emb G$ we extended $\theta(f,\phi)$ to a function on $G(F)\bk G(\A)$ by letting it take the value zero outside $G^{+}(\A)$. This extension is unique.

\begin{rem}
For any $h\in H=H^{0}\rtimes \mu_{2}$ there is an $h_{0}\in H^{0}$ with $\lambda(h)=\lambda(h_{0})$ since $h=h_{0}\varepsilon$ for $\varepsilon\in\mu_{2}\isom\langle\iota\rangle$ where $\iota\in H$ is the element defined in \eqref{eq_first_mention_iota} with $\lambda(\iota)=1$. Thus we may interchange $H$ with $H^{0}$ in the definition of $G^{+}$.
\end{rem}

\begin{defn}
Let $\theta(\sigma)$ be the automorphic representation of $G(\A)$ realised in the space
\begin{equation*}
\V_{\theta(\sigma)}=\left\lbrace\, R(g)\,\theta(f,\phi)\,:\,f\in\V_{\sigma},\,\phi\in\Sch(V^{2}(\A)),\,g\in G(\A)\, \right\rbrace.
\end{equation*}
We call $\theta(\sigma)$ the (\textit{global}) \textit{theta lift} of $\sigma$ to $G(\A)$.
\end{defn}

We shall fix assumptions on $\sigma$ (see Assumption \ref{assump_on_reps}) under which $\theta(\sigma)$ is \textit{cuspidal}. Under these conditions \cite[Lemma 7.12]{gan_ichino} applies so that $\V_{\theta(\sigma)}\neq 0$. We then obtain a $Y(\A)$-equivariant, surjective map
\begin{equation}\label{eq_theta_equi_map}
\theta\colon \V_{\sigma}\otimes\V_{\weil}\longrightarrow\V_{\theta(\sigma)}\,.
\end{equation}
We may restrict $\theta$ to $\V_{\weil_{v}}\otimes\V_{\sigma_{v}}$ at each place $v$ and conclude that, by the uniqueness of the local maps \eqref{eq_unique_map}, for $\sigma=\otimes_{v}\,\sigma_{v}$,
\begin{equation*}
\theta(\sigma)\isom\otimes_{v}\,\theta(\sigma_{v})
\end{equation*}
and is irreducible \cite[Lemma 7.2]{gan_ichino}. In particular, the local factors $\theta(\sigma_{v})$ are unitary and non-zero at each $v$.

\subsection{Automorphic induction}\label{sec_AI} An alternative description of the theta lift is that it arises due to a functorial transfer of representations from $H'(\A)$ to $\GSpf(\A)$ where $H'=\Res_{E/F}(\GLt)$ is the Weil restriction of scalars (meaning that $H'$ is unique in that $H'(F)=\GLt(E)$ as algebraic groups) and $E$ is a quadratic extension of $F$. For simplicity let us consider the trivial central character interpretation: the automorphic induction transfer between automorphic representations of the groups $H'_{1}=\Res_{E/F}\SLt$ and $G_{1}=\Spf$. On the one hand, the $L$-group of $G_{1}$ is $\dgroup{G}_{1}=\SO_{5}(\C)\times\Ga_{F}$ where $\Ga_{F}$ is the absolute Galois group of $F$. On the other hand, the $L$-group of $H'$ is
\begin{equation*}
\dgroup{H}'_{1}\isom \prod_{\Ga_{E}\bk\Ga_{F}}\SLt(\C)\rtimes \Ga_{F}
\end{equation*}
noting $\Ga_{E}\bk\Ga_{F}\isom\Gal(E/F)$ acts on the first factor in the product via permutations of the index set. Once again, make note of the isomorphism $\SO_{5}(\C)\isom\PGSpf(\C)$ which gives rise to an embedding
\begin{equation*}
\begin{array}{cccc}
\SLt(\C)\times\SLt(\C)\longrightarrow\Spf(\C)\,;&
\left(\begin{pmatrix}
a&b\\
c&d
\end{pmatrix},\begin{pmatrix}
a\pr&b\pr\\
c\pr&d\pr
\end{pmatrix}\right)&\longmapsto&\begin{pmatrix}
a&&b&\\
&a\pr&&b\pr\\
c&&d&\\
&c\pr&&d\pr
\end{pmatrix}
\end{array}
\end{equation*}
which in turn induces an $L$-homomorphism
\begin{equation*}
u\colon \dgroup{H}'_{1}\rightarrow\dgroup{G}_{1}\,.
\end{equation*}
On composing $u$ with a representation $r$ of the Weil--Deligne group $W'_{E}$ of $E$ into $\dgroup{H}'_{1}$ we obtain a representation $u\circ r$ that lands in $\dgroup{G}_{1}$. Noting $W'_{E}\subset W'_{F}$, this acquired representation is precisely the induced representation
\begin{equation*}
u\circ r=\Ind_{W'_{E}}^{W'_{F}}r
\end{equation*}
(on the Galois side). Whilst on the automorphic side we have an irreducible, cuspidal automorphic representation $\AI(\pi')$ of $G_{1}(\A)$ for each $\pi'$ on $H'_{1}(\A)=\SLt(\A_{E})$. A more general review in support of this exposition is given in \cite{cogdell}. 

A characteristic property of such a lift is that the $L$-function of the representations ($\AI(\pi')$ and $\pi'$) are equal, thus uniquely characterising the target $L$-packet. By the work of Roberts \cite[\S 8]{roberts} we find that this is also the case for the theta lift discussed in the previous two sections. Then, due to an exceptional isomorphism (see the next section, \S \ref{sec_fourdim}), we may realise the group $\GO_{2}$ as $\Res_{E/F}(\GLt)$ and hence any representation given by the above theta lift is functorial in this sense.

\section{Automorphic Representations of $\GO_{4}$}\label{sec_autm_reps_H}

To classify the image of the theta correspondence for $(\GO_{4},\GSpf)$ we provide a thorough review concerning the domain of the lift: we determine the structure of all four-dimensional quadratic spaces $V$, giving rise to $\GO(V)\isom\GO_{4}$, and with this analysis we examine the irreducible, cuspidal automorphic representations of $\GO(V)(\A)$. The review in this section is largely expository, however it includes new notation and crucial results which are used freely later on.

\subsection{Four-dimensional quadratic spaces and their similitude groups}\label{sec_fourdim}

Any four-dimensional quadratic space is isomorphic to a member of a family of spaces whose structure is explicit and indexed by two invariants: a quaternion algebra and a square-free integer (corresponding to the discriminant). For more details we refer to the exposition given in \cite[\S 2]{roberts}.

Consider a four-dimensional quadratic space $V$ over $F$ with $\disc(V)=e$. Let $E=K_{V}$ be the discriminant algebra of $V$ (defined in \eqref{eq_discalg}) and put $\Gal(E/F)=\left\lbrace 1, \kappa\right\rbrace$, using both $\kappa(z)$ and $z^{\kappa}$ to denote the image of $z\in E$ under $\kappa$. The usual norm and trace of $E/F$ are given by
\begin{equation*}
\N_{E/F}(z)=zz^{\kappa}\quad{\rm and}\quad\Tr_{E/F}(z)=z+z^{\kappa}\,.
\end{equation*}

\begin{defn}\label{def_X}
Let $B$ be an arbitrary $F$-algebra whose centre is $E$ with an involution $x\mapsto x\inv$ that fixes $E$. Call $B$ a \textit{quadratic-quaternion algebra} over $F$ if there is a quaternion algebra $D$, over $F$, contained in $B$ such that the natural map $D\otimes_{F} E\rightarrow B$, given by $x\otimes z\mapsto xz$, is an isomorphism of $E$-algebras and the canonical involution on $D$ is given by $x\mapsto x\inv$.
Choosing a $D$, there is no loss in generality in considering $B=D(E)$, the $E$-points of the $F$-algebra $D$. The \textit{norm} and \textit{trace} on $B$ are defined respectively as
\begin{equation*}
\N_{B}(x)=xx\inv\quad{\rm and}\quad\Tr_{B}(x)=x+x\inv\,.
\end{equation*}
When restricted to $D$ these are the usual reduced norm $\N_{D}$ and trace $\Tr_{D}$. Endow $B$ with the unique Galois action (with respect to $D$) by linearly extending the automorphism $\kappa$ of $E$ to $B$, that is $\kappa(xz)=x\kappa(z) $ for $z\in E$, $x\in D$. Denote this Galois action by $\kappa$ as well. Finally, define a second four-dimensional quadratic space (over $F$) by
\begin{equation*}
X=X_{D,e}=\lbrace\,x\in D(E)\,:\, \kappa(x)=x\inv\,\rbrace\,,
\end{equation*}
whose quadratic form, denoted $\N_{X}$, is given by the restriction of $\N_{B}$ to $X$. We find that this new space has $\disc X_{D,e}=\det \N_{X}=e$ upon computing the determinant of $N_{X}$.
\end{defn}

\begin{rem}
A \textit{Galois action} on $B$ is an $F$-automorphism $a\colon B\rightarrow B$ such that $a^{2}=1$ and $a(xz)=a(x){\color{red}\kappa}(z)$ for $z\in E$, $x\in B$. There is a bijection between Galois actions on $B$ and quaternion $F$-algebras contained in $B$.
\end{rem}

By \cite[Proposition 2.7]{roberts} we have the exact sequence
\begin{equation}\label{eq_GSO(X)_seq}
1\longrightarrow \Ex\arrup{\longrightarrow}{\Delta} \Fx\times\Bx\arrup{\longrightarrow}{\rho}\GSO(X)\longrightarrow 1\,,
\end{equation}
where the injection $\Delta\colon\Ex\rightarrow\Fx\times\Bx$ is given by $\Delta(z)=(\N_{E/F}(z),z)$ and the action of $\Fx\times\Bx$ on $X$ is given by
\begin{equation*}
\rho(s,a)x=s^{-1}a\, x\, \iota(a)\inv\,.
\end{equation*}
In particular, writing $\Delta\Ex$ for $\im(\Delta)$, we have
\begin{equation}\label{eq_GSO(X)_isom}
\Fx\times\Bx\,/\Delta\Ex\isom\GSO(X)\,.
\end{equation}
The similitude factor of an element $\rho(s,a)\in\GSO(X)$ is given by
\begin{equation*}
\lambda(\rho(s,a))=s^{-2}\N_{E/F}(\N_{B}(a))\,.
\end{equation*}

We denote by $\iota$ the restriction of the Galois action $\kappa$ to the subspace $X\subset B$ (again writing $\iota(x)$ and $x^{\iota}$ for the image of $x$ under $\iota$). The notation $\iota$ rightfully coincides with that already introduced in \S\ref{sec_quad_spaces} since the map $\iota$ is precisely the unique element of $\GO(X)$ satisfying the properties $\iota\in \Or(X)$, $\iota^{2}=1$ and $\det \iota = -1$ by \cite[Proposition 2.5 \& 2.7]{roberts}. We choose this element to fix, once and for all, the splitting
\begin{equation*}
\mu_{2}(F)\isom\langle\,\iota\,\rangle\quad{\rm and}\quad\GO(X)\isom\GSO(X)\rtimes\langle\,\iota\,\rangle\,.
\end{equation*}
Conjugating an element $\rho(s,a)\in\GSO(X)$ by $\iota$ gives the relation $\iota\rho(s,a)\iota=\rho(s,a^{\iota})$; we denote this \textit{adjoint} of $\iota$ action by
\begin{equation}\label{eq_adjoint_action}
\Ad(\iota)\colon \rho(s,a)\longmapsto \rho(s,a^{\iota})\,.
\end{equation}

\begin{prop}\label{prop_Xsuffices}
Let $V$ be an arbitrary four-dimensional quadratic space over $F$ of discriminant $e$. Then there exists a quaternion algebra $D$ over $F$ and an isomorphism $\gamma\colon V\longrightarrow X_{D,e}$ such that the map
\begin{equation*}
c_{\gamma}\colon \GSO(V)\longrightarrow\GSO(X_{D,e})\,,
\end{equation*}
given by $c_{\gamma}(g)= \gamma\circ g \circ \gamma^{-1}$, is an isomorphism of similitude groups. There is therefore no loss in generality in considering the space $\GSO(X_{D,e})$ in place of $\GSO(V)$
\end{prop}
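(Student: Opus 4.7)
The plan is to split the proposition into two logically independent parts: first, a \emph{classification} statement that every four-dimensional quadratic space $V$ over $F$ of discriminant $e$ is isometric to $X_{D,e}$ for some quaternion algebra $D$; and second, a \emph{functoriality} statement that any such isometry induces, by conjugation, an isomorphism of the connected similitude groups. The first part carries all the real content and I would import it from Roberts \cite{roberts}; the second is a direct verification.

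For the classification step, I would attach to $V$ a quaternion $F$-algebra $D$ via the even Clifford algebra. When $e \notin (\Fx)^{2}$ the even Clifford algebra $C^{+}(V)$ is a central simple algebra of dimension four over $E = F(\sqrt{e}\,)$, and by a standard descent argument it is obtained by scalar extension of a (unique) quaternion algebra $D$ over $F$; in the split case $e \in (\Fx)^{2}$ one takes $E = F\times F$ and treats the two components symmetrically. One then verifies directly that $X_{D,e}$, as defined in Definition \ref{def_X}, is a four-dimensional quadratic space over $F$ with $\disc X_{D,e} = e$ and attached quaternion algebra $D$. Since the isomorphism class of a four-dimensional quadratic space is pinned down by these two invariants, this forces the existence of an isometry $\gamma \colon V \to X_{D,e}$. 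Roberts establishes exactly this correspondence (see in particular the discussion culminating in \cite[Proposition 2.3, 2.5]{roberts}), so I would simply quote the relevant proposition.

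For the functoriality step, suppose $\gamma \colon V \to X_{D,e}$ is an isometry, meaning $\N_{X}(\gamma v) = q_{V}(v)$ for all $v \in V$. Then $c_{\gamma}(g) = \gamma \circ g \circ \gamma^{-1}$ is visibly an algebraic group isomorphism $\GL(V) \to \GL(X_{D,e})$. For $g \in \GO(V)$ with similitude factor $\lambda(g)$, the calculation
\begin{equation*}
\N_{X}\bigl(c_{\gamma}(g)\, y\bigr) \;=\; q_{V}\bigl(g\, \gamma^{-1} y\bigr) \;=\; \lambda(g)\, q_{V}(\gamma^{-1}y) \;=\; \lambda(g)\, \N_{X}(y)
\end{equation*}
shows that $c_{\gamma}(g) \in \GO(X_{D,e})$ with \emph{the same} similitude character. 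Thus $c_{\gamma}$ restricts to an isomorphism $\GO(V) \to \GO(X_{D,e})$, and since it is a morphism of algebraic groups it must carry the identity component $\GSO(V)$ isomorphically onto $\GSO(X_{D,e})$, which is what the proposition asserts.

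The main obstacle is the classification step: pinning down which quaternion algebra $D$ corresponds to a given $V$ and verifying the isometry $V \cong X_{D,e}$. Once one accepts Roberts' treatment of this (which is exactly the content he develops in the opening of \cite[\S 2]{roberts}), the remainder of the proof is the purely formal manipulation above.
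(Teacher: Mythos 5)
The paper's ``proof'' of Proposition~\ref{prop_Xsuffices} is a single citation to Roberts \cite[Proposition 2.8]{roberts}, whereas you reconstruct the argument in two stages --- classification via the even Clifford algebra, then formal functoriality of conjugation --- while still ultimately deferring the hard classification step to Roberts (\S 2). So the route is essentially the same, and your functoriality verification (that $c_\gamma$ preserves similitude factors and hence carries the connected component $\GSO(V)$ onto $\GSO(X_{D,e})$) is correct and worth spelling out since it is genuinely easy.

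Two points in the classification stage are imprecise and should be flagged. First, the assertion that ``the isomorphism class of a four-dimensional quadratic space is pinned down by [the discriminant and the even Clifford algebra]'' is too strong: for $m$ even, $C^+(\alpha q)\cong C^+(q)$ for any scalar $\alpha$, so the even Clifford algebra is only a similitude invariant, and over a number field the pair $(e, D)$ determines $V$ up to \emph{similitude}, not up to isometry. This is harmless for the proposition --- your displayed computation goes through verbatim when $\gamma$ is only a similitude, since the factor $\mu$ from $\gamma$ cancels --- but as written the word ``isometry'' overclaims. Second, the ``standard descent argument'' for why $C^+(V)$, a quaternion algebra over $E$, is of the form $D\otimes_F E$ deserves more than a wave: it is not true that an arbitrary quaternion algebra over $E$ descends in this way. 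What makes it work here is that $C^+(V)$ carries a canonical involution of the second kind restricting to $\Gal(E/F)$ on the center, and the fixed algebra of the induced Galois action supplies $D$ --- this is precisely the quadratic-quaternion-algebra structure of Definition~\ref{def_X}, so it is worth noting that the descent is not generic but is built into the Clifford construction. With those two caveats the sketch is sound, and citing Roberts for the remaining details is exactly what the paper does.
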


\begin{proof}
See \cite[Proposition 2.8]{roberts}.
\end{proof}

From here on in, fix a quaternion algebra $D$ over $F$ and a square free integer $e$. We shall work with the four-dimensional quadratic space $X=X_{D,e}$. Fix notation for: the quadratic extension $E=F(\sqrt{e}\,)$ and the quadratic quaternion algebra $B=D(E)$. We assume the application of $V=X$ to the notations $H=\GO(V)$ etc.~of \S \ref{sec_groups_notation}.

\subsection{Local representation theory for $H(F_{v})$}

In this section let $v$ be a place of $F$ and suppress the subscript $v$ from the notation (for example, $F$ now denotes a local field). We shall systematically discuss the local (and later global) representation theory of $H$ in terms of that of $H^{0}$. We use this section to fix notation; this material has been previously considered in the expositions \cite[\S 1]{harris_soudry_taylor}, \cite[\S 2-4]{roberts} and \cite[\S A]{gan_ichino} -- we advise the reader to look there for details and proof. In \cite{takeda}, all restrictions in \cite{roberts} are removed, in particular the quadratic space $X$ may be of any signature.


\subsubsection{Admissible representations of $H^{0}$}\label{sec_admiss_repsH0}
In light of the isomorphism in \eqref{eq_GSO(X)_isom},
\begin{equation*}
\rho\colon\Fx\times\Bx\,/\Delta\Ex\arrup{\longrightarrow}{\sim} H^{0}\,,
\end{equation*}
let $(\tau,\V_{\tau})$ be an irreducible, admissible, unitary representation of $\Bx=\Bx(F)$ with central character $\omega_{\tau}$ (noting $Z_{\Bx}=\Ex$). Further assume that $\omega_{\tau}$ is $\Gal(E/F)$-invariant; thus we let $\nu$ be the unitary character of $\Fx$ such that
\begin{equation}\label{eq_cen_char_res1}
\omega_{\tau}=\nu^{-1}\circ\N_{E/F}\,.
\end{equation}
Every irreducible, admissible, unitary representation of $H^{0}$ may then be written in the form $\sigma_{0}=\sigma_{0}(\nu,\tau)$, for such a $\nu$ and $\tau$, by defining
\begin{equation*}
\sigma_{0}(\rho(s,a))=\nu(s)\tau(a)\,.
\end{equation*}
Both $\sigma_{0}$ and $\tau$ are realised in the same space $\V_{\sigma_{0}}=\V_{\tau}$. The requirement on $\nu$ \eqref{eq_cen_char_res1} ensures that $\sigma_{0}(\nu,\tau)$ is indeed trivial on $\Delta \Ex$. We identify the centre $Z_{H^{0}}\isom\Fx$, through $\rho$, as the set
\begin{equation*}
\left\lbrace\, (x^{-1},1)\,:\, x\in \Fx\right\rbrace\subset\ \Fx\times\Bx/\Delta \Ex,
\end{equation*}
from which we note that $\sigma_{0}$ has central character
\begin{equation*}
\omega_{\sigma_{0}}=\nu^{-1}\,.
\end{equation*}

\begin{defn}\label{def_distinguished} Suppose that $v$ is not split in $E$ (so that $E=E(F_{v})$ is a field). In this case, we
call an irreducible admissible representation $\sigma_{0}$ of $H^{0}$ \textit{distinguished} if
\[\sigma_{0}=\sigma_{0}(\omega_{\varrho}^{-1},\,\varrho_{E}^{D}\,)\,,\]
for some irreducible admissible representation $\varrho$ of $\GLt(F)$; denoting by $\varrho_{E}$ the base-change lift of $\varrho$ from $\GLt(F)$ to $\GLt(E)$, and appending the superscript $D$ to mean that $\varrho_{E}^{D}$ is the Jacquet--Langlands transfer of $\varrho_{E}$ from $\GL(E)$ to $\Dx(E)=\Bx$.
\end{defn}

The central character of such a distinguished $\sigma_{0}(\omega_{\varrho}^{-1},\,\varrho_{E}^{D}\,)$ is $\omega_{\varrho}$, the central character of $\varrho$. This follows from properties of the base-change lift (that $\omega_{\varrho_{E}}=\omega_{\varrho}\circ\N_{E/F}$). Distinguished representations are invariant under the adjoint action of $\iota$ on $H^{0}$ \eqref{eq_adjoint_action}. Hence a distinguished representation has the property that $\sigma_{0}\isom\sigma_{0}\circ\Ad(\iota)$ since we have $\varrho_{E}\isom\varrho_{E}\circ\iota$ (see \cite[\S 3]{arthur_clozel}).

\subsubsection{Admissible representations of $H$}\label{sec_admiss_reps_H}

To describe the irreducible, admissible representations of $H$ it suffices\footnote{Let $\sigma$ be an irreducible, admissible representation of $H$. Then either $\Res_{H^{0}}^{H}(\sigma)$ is irreducible, in which case $\sigma$ is an irreducible constituent of $\Ind_{H^{0}}^{H}(\Res_{H^{0}}^{H}(\sigma))$ and we are in the `invariant' case, or
\begin{equation*}
\Res_{H^{0}}^{H}(\sigma)=\sigma_{0,1}\oplus\sigma_{0,2}\,,
\end{equation*}
in which case $\sigma\isom\Ind_{H^{0}}^{H}(\sigma_{0,i})$ for either $i=1,2$; this is the `regular' case. Definition \ref{def_reg_inv} provides a full explanation of the invariant and regular cases.} to consider the induction of some $\sigma_{0}$ as $\sigma_{0}$ varies over the irreducible, admissible representations of $H^{0}$. To make this explicit, put $\sigma_{0}^{\iota}=\sigma_{0}\circ\Ad(\iota)$ and consider a second representation of $H^{0}$ in $\V_{\sigma_{0}}$ given by
\begin{equation*}
\sigma_{0}^{\iota}(h)v= \sigma_{0}(\iota h \iota)v\quad{\rm for}\,\,v\in\V_{\sigma_{0}}\,.
\end{equation*}
Now define the representation $(\hat{\sigma},\V_{\hat{\sigma}})$ of $H$ by setting $\V_{\hat{\sigma}}=\V_{\sigma_{0}}\oplus\V_{\sigma_{0}}$ and letting $H$ act on $u\oplus v$ by
\begin{equation*}
\left\lbrace\begin{array}{rcl}\vspace{0.05in}
\hat{\sigma}(h_{0}) u\oplus v  &=& \sigma_{0}(h_{0})u\oplus\sigma_{0}^{\iota}(h_{0})v\\
\hat{\sigma}(\iota) u\oplus v  &=& v\oplus u
\end{array}\right.
\end{equation*}
noting that any $h\in H$ may be written uniquely as $h=h_{0}\varepsilon$ for some $h_{0}\in H^{0}$ and $\varepsilon\in\mu_{2}$.

On the other hand, recall that $\Ind_{H^{0}}^{H}(\sigma_{0})$ is given by right translation in the space
\begin{equation*}
\left\lbrace\,f\colon H\longrightarrow\V_{\sigma_{0}}\,\mid\, f(h_{0}h)=\sigma_{0}(h_{0})f(h)\quad{\rm for}\,\,h_{0}\in H^{0}\,\right\rbrace.
\end{equation*}
One may check that there is an $H$-module isomorphism between the representations $\hat{\sigma}\isom\Ind_{H^{0}}^{H}(\sigma_{0})$. We will use $\hat{\sigma}$ as a model for $\Ind_{H^{0}}^{H}(\sigma_{0})$ from now on and proceed by dividing our analysis into two cases.

\begin{defn}\label{def_reg_inv} Let $\sigma_{0}$ be an irreducible, admissible representation of $H^{0}$.
\begin{itemize}\vspace{0.05in}
\item We say $\sigma_{0}$ is \textit{regular} if $\hat{\sigma}\isom\Ind_{H^{0}}^{H}(\sigma_{0})$ is irreducible. We find $\hat{\sigma}\isom\hat{\sigma}\otimes\sgn$ and, as $H^{0}$-representations, $\sigma_{0}\not\isom\sigma_{0}^{\iota}$. In this case denote $\sigma_{0}^{+}=\Ind_{H^{0}}^{H}(\sigma_{0})$.\vspace{0.05in}
\item We say $\sigma_{0}$ is \textit{invariant} if $\hat{\sigma}\isom\Ind_{H^{0}}^{H}(\sigma_{0})$ is reducible. We find $\hat{\sigma}\not\isom\hat{\sigma}\otimes\sgn$ and the adjoint action of $\iota$ in $\V_{\sigma_{0}}$ is trivial, that is, $\sigma_{0}\isom\sigma_{0}^{\iota}$. In this case
\begin{equation*}
\Ind_{H^{0}}^{H}(\sigma_{0})\isom\sigma_{0}^{+}\oplus\sigma_{0}^{-}
\end{equation*}
where $\sigma_{0}^{\pm}$ are two non-isomorphic irreducible representations of $H$.
\end{itemize}
\end{defn}

\begin{rem}
If $\sigma_{0}$ is distinguished then we have already noted that $\sigma_{0}$ is invariant. In this instance exactly one of $\sigma_{0}^{\pm}$ occurs in the theta correspondence with $\GSpf$ (see \cite[Theorem 3.4]{roberts}), denoting this representation by $\sigma_{0}^{+}$. Then for an irreducible, admissible representation $\sigma$ of $H$ we have that $\theta(\sigma)\neq 0$ if and only if $\sigma\not\isom\sigma_{0}^{-}$ for some distinguished $\sigma_{0}$ of $H^{0}$.
\end{rem}

\subsection{Global representation theory and automorphic forms for $H(\A)$}

In this section we reinstate $F$ as a number field. Our purpose is now to review the theory of automorphic forms on $H(\A)$. The following sources should be referred to for more detail: \cite[\S 1]{harris_soudry_taylor}, \cite[\S 5-7]{roberts} and \cite[\S 2]{gan_ichino}.

\subsubsection{Automorphic representations of $H^{0}(\A)$}\label{sec_autm_forms_H0}

The exactness of the sequence in \eqref{eq_GSO(X)_seq} (taking $F=F_{v}$ for each place $v$) implies that
\begin{equation*}
1\longrightarrow \Ax_{E}\arrup{\longrightarrow}{\Delta} \Ax\times\Bx(\A)\arrup{\longrightarrow}{\rho} H^{0}(\A)\longrightarrow 1
\end{equation*}
is also exact, where $\rho$ and $\Delta$ operate as in the exact sequence \eqref{eq_GSO(X)_seq} at each place. We identify $E(\A)$ with $\A_{E}$ and note $\Bx(\A)=\Dx(\A_{E})$. As subspaces of $L^{2}(\Ax\times\Bx(\A))$, the tensor product of the spaces of cusp forms $\Ao(\Fx)\otimes\Ao(\Bx)$ is dense in $\Ao(\Fx\times\Bx)$ and since these are spaces of smooth functions they are isomorphic. Any function on $\Ax\times\Bx(\A)\,/\Delta\Ax_{E}$ is a function on $\Ax\times\Bx(\A)$ subject to the constraint that it is constant on equivalence classes modulo $\Delta\Ax_{E}=\im(\Delta)$. In particular, if $\nu\colon \Fx\bk\Ax\rightarrow\Cx$ is a unitary Hecke character and $(\tau,\V_{\tau})$ an irreducible, cuspidal automorphic representation of $\Bx(\A)$ then, given some $\eta\in\V_{\tau}$, we have that $\nu\otimes\eta\in\Ao(\Fx\times\Bx\,/\Delta\Ex)$ if and only if

\begin{equation*}
\omega_{\tau}(z)=\nu^{-1}\circ\N_{E/F}(z)\qquad\forall\,\,z\in\Ax_{E}
\end{equation*}
where $\omega_{\tau}\colon \Ex\bk\Ax_{E}\rightarrow\Cx$ is the central character of $\tau$. Hence any irreducible, cuspidal automorphic representation of $H^{0}(\A)$ is of the form $\sigma_{0}=\sigma_{0}(\nu,\tau)$, for such a $\nu$ and $\tau$, where $\sigma_{0}$ is realised in the space of cusp forms $\V_{\sigma_{0}}=\left\lbrace\,\nu\otimes\eta\,\colon\,\eta\in\V_{\tau}\,\right\rbrace$ by the formula $\sigma_{0}(\rho(s,a))\,\nu\otimes\eta=\nu(s)\nu\otimes\tau(a)\eta$. Once again, the central character of $\sigma_{0}$ is $\omega_{\sigma_{0}}=\nu^{-1}$.


\subsubsection{Factorising automorphic representations of $\Bx(\A_{E})$ and $H^{0}(\A)$}\label{sec_factorise_reps} Consider the isomorphism
\begin{equation}\label{eq_rama}
 E\otimes_{F}F_{v}\isom \prod_{w\vert v} E_{w}\,,
\end{equation}
where the product is over all places of $w$ of $E$ above $v$ \cite[Proposition 4-40]{ramakrishnan}. One deduces
\begin{equation*}
\Bx(F_{v})\isom \prod_{w\vert v}\Bx(E_{w})\,.
\end{equation*}
Thus smooth representations of $\Bx(F_{v})$ are of the form $\tau_{v}=\otimes_{w\vert v}\,\tau_{w}$ where the $\tau_{w}$ are smooth representations of $\Bx(E_{w})$ for $w\vert v$. If $\sigma_{0}=\sigma_{0}(\nu,\tau)$ is an irreducible, cuspidal automorphic representation of $H^{0}(\A)$, as in \S \ref{sec_autm_forms_H0}, then by the tensor product theorem we may assume $\sigma_{0}\isom\otimes_{v}\sigma_{0,v}$ and $\nu=\otimes_{v}\nu_{v}$, over places $v$ of $F$, and $\tau\isom\otimes_{w}\tau_{w}$ over places $w$ of $E$. Then, by the previous remark, these local factors are related by $\sigma_{0,v}=\sigma_{0,v}(\nu_{v},\tau_{v})$ where $\tau_{v}=\otimes_{w\vert v}\,\tau_{w}$ and the space $\V_{\sigma_{0,v}}=\V_{\tau_{v}}=\otimes_{w\vert v}\V_{\tau_{w}}$ (as per \S \ref{sec_admiss_repsH0}).

\subsubsection{Automorphic representations of $H(\A)$}\label{sec_autm_forms_H}

\begin{assump}\label{assump_on_reps} Let $\sigma\isom\otimes_{v}\sigma_{v}$ be an irreducible, cuspidal automorphic representation of $H(\A)$ realised on the space $\V_{\sigma}\subset\Ao(H)$. For the remainder of this paper we shall assume the following for such a representation $\sigma$.
\begin{itemize}
\item[(1)] The Jacquet--Langlands transfer of $\sigma\vert_{\Bx(\A_{E})}$ to $\GLt(\A_{E})$ is cuspidal.\vspace{0.05in}
\item[(2)] There is at least one place $v$ for which $\sigma_{v}\isom\sigma_{v}\otimes\sgn$.\vspace{0.05in}
\item[(3)] If $\sigma_{v}\not\isom\sigma_{v}\otimes\sgn$ then $\sigma_{v}\not\isom\sigma_{0,v}^{-}$ for any distinguished (and invariant) admissible representation $\sigma_{0,v}$ of $H^{0}(F_{v})$.
\end{itemize}
These conditions are imposed in \cite{gan_ichino}, thus ensuring that $\theta(\sigma)$ is both cuspidal (1) and non-zero (3). Condition (2) is necessary to compute the Petersson inner product of the theta lift $\theta(\sigma)$ in \eqref{eq_GI_global_theta_pair}.
\end{assump}
We now determine all such $\sigma$ by considering their restriction to $H^{0}(\A)$. (This top-down approach contrasts with the bottom-up analysis used in the local setting.) To this end, define a (possibly infinite) subset of the places of $F$ by
\begin{equation*}
\Set=\left\lbrace\,v\,:\,\sigma_{v}\isom\sigma_{v}\otimes\sgn\,\right\rbrace.
\end{equation*}
Assumption \ref{assump_on_reps}-(2) implies $\Set\neq\emptyset$. By the tensor product theorem, fix an isomorphism of $H(\A)$-representations

\begin{equation*}
\begin{array}{l}
{\displaystyle\V_{\sigma}\isom\bigotimes_{v}}\pr \displaystyle\V_{\sigma_{v}}=\lim_{\substack{\longrightarrow\\S}}\big(\bigotimes_{v\in S}\V_{\sigma_{v}}\big)\otimes\big(\bigotimes_{v\not\in S}f^{\circ}_{v}\big)
\end{array}
\end{equation*}
where $\V_{\sigma_{v}}$ is the space of $\sigma_{v}$ and, for a sufficiently large set of places $S$ outside which $\sigma_{v}$ is unramified, $f^{\circ}_{v}\in\V_{\sigma_{v}}$ is an $H(\mathcal{O}_{v})$-invariant (spherical) vector for $v\not\in S$. By analogy with our local discussion \S \ref{sec_admiss_reps_H}, the restriction of $\sigma_{v}$ to $H^{0}_{v}$ gives rise to two cases.

\begin{itemize}\vspace{0.05in}

\item If $v\in\Set$ then $\sigma_{v}\vert_{H^{0}_{v}}\isom\sigma_{0,v}\oplus\sigma_{0,v}^{\iota}$ where $\sigma_{0,v}$ is an irreducible representation of $H^{0}_{v}$ with $\sigma_{0,v}\not\isom\sigma_{0,v}^{\iota}$. Earlier, we called such a $\sigma_{0,v}$ \textit{regular} and noted that its induction, $\hat{\sigma}_{v}$, was irreducible. The space of $\sigma_{v}$ decomposes as $\V_{\sigma_{v}}=\V_{\sigma_{0,v}}\oplus\V_{\sigma_{0,v}^{\iota}}$, realising the space $\V_{\sigma^{\iota}_{0,v}}\isom\sigma_{v}(\iota)\V_{\sigma_{0,v}}$. For almost all $v\in\Set$, the spherical vector $f^{\circ}_{v}= \mathfrak{F}^{\circ}_{v}+\sigma_{v}(\iota)\mathfrak{F}^{\circ}_{v}\in \V_{\sigma_{0,v}}\oplus\V_{\sigma_{0,v}^{\iota}}$ where $\mathfrak{F}^{\circ}_{v}$ is an $H^{0}(\mathcal{O}_{v})$-invariant vector.\vspace{0.1in}

\item If $v\not\in\Set$ then $\sigma_{v}\vert_{H^{0}_{v}}$ is irreducible and \textit{invariant}; we have $\V_{\sigma_{v}}=\V_{\sigma_{0,v}}$ and the spherical vector $f^{0}_{v}=\mathfrak{F}^{\circ}_{v}$ is $H^{0}(\mathcal{O}_{v})$-invariant. Write $\sigma_{0,v}=\sigma_{v}\vert_{H^{0}_{v}}$ in this case.\vspace{0.05in}
\end{itemize}
Let $S$ be a sufficiently large set of places of $F$ and put $S\pr=S\smallsetminus(S \cap\Set)$. For each $\varepsilon=(\varepsilon_{v})\in\mu_{2}(F_{S\cap\Set})$ define $\V^{\varepsilon}_{\sigma,S}\subset\V_{\sigma}$ by
\begin{equation*}
\V^{\varepsilon}_{\sigma,S}\isom\big(\bigotimes_{v\in S\cap\Set}\sigma_{v}(\varepsilon_{v})\V_{\sigma_{0,v}} \big)\otimes\big(\bigotimes_{v\in S\pr}\V_{\sigma_{0,v}}\big)\otimes\big(\bigotimes_{v\not\in S}f^{\circ}_{v}\big)\,.
\end{equation*}

Viewing $\sigma$ from a different perspective, consider the space of restricted functions
\begin{equation*}
\V_{\sigma}\vert_{H^{0}(\A)}=\left\lbrace\, f\vert_{H^{0}(\A)}\,:\,f\in\V_{\sigma}\,\right\rbrace\,.
\end{equation*}
By \cite[Lemma 2]{harris_soudry_taylor} there exists an irreducible, cuspidal automorphic representation $\sigma_{0}$ of $H^{0}(\A)$ realised in a space of cusp forms $\V_{\sigma_{0}}$ such that
\begin{equation}\label{eq_hst_Ltwo}
\V_{\sigma}\vert_{H^{0}(\A)}=\V_{\sigma_{0}}\oplus\V_{\sigma_{0}}^{\iota}\,,
\end{equation}
defining $\V_{\sigma_{0}}^{\iota}=\left\lbrace\, f^{\iota}=f\circ\Ad(\iota)\,:\,f\in\V_{\sigma_{0}}\,\right\rbrace$, and such that $\sigma_{0}\not\isom\sigma_{0}^{\iota}$. In this circumstance we shall say $\sigma$ \textit{lies above} $\sigma_{0}$.

Applying the tensor product theorem and comparing the local components of $\sigma$ and $\sigma_{0}$ with those $\sigma_{0,v}$ already defined, we may assume that $\sigma_{0}\isom\otimes_{v}\sigma_{0,v}$. Moreover, choosing $\varepsilon=1$, the restriction of the space of functions $\V^{1}_{\sigma,S}=\V_{\sigma_{0}}$.

As a final remark, \eqref{eq_hst_Ltwo} shows that $\V_{\sigma,S}^{\varepsilon}\vert_{H^{0}(\A)}=\left\lbrace 0 \right\rbrace$ unless $\varepsilon\in\mu_{2}(F)$ (else contradicting that $\sigma_{0}\not\isom\sigma_{0}^{\iota}$). In particular, consider evaluating a function $f\in \V^{1}_{\sigma,S}$ on
\begin{equation*}
H(\A)=\bigcup_{\varepsilon\in\mu_{2}(F_{S\cap\Set})} H^{0}(\A)\mu_{2}(\A^{S\cap\Set})\varepsilon\,.
\end{equation*}
For $\varepsilon\in\mu_{2}(F_{S\cap\Set})$ we have $\V^{\varepsilon}_{\sigma,S}=\sigma(\varepsilon)\V^{1}_{\sigma,S}$ and hence $\sigma(\varepsilon)f=0$ unless $\varepsilon\in\mu_{2}(F)$. We then obtain \cite[Lemma 2.2]{gan_ichino}:
\begin{equation}\label{eq_gi_lem2.2}
\supp(f)\subset H^{0}(\A)\mu_{2}(\A^{S\cap\Set})\,\cup\, H^{0}(\A)\mu_{2}(\A^{S\cap\Set})\iota\,.
\end{equation}

\subsection{Explicit unitary pairings and the Petersson inner product}\label{sec_pairings}

The unique (up to scalar) unitary pairings $\B_{\sigma_{0,v}}\colon\V_{\sigma_{0,v}}\otimes\V_{\bar{\sigma}_{0,v}}\rightarrow\C$ associated to the local components $\sigma_{0,v}=\sigma_{0,v}(\nu_{v},\tau_{v})$ of $\sigma_{0}(\nu,\tau)$, as in \S \ref{sec_factorise_reps}, are precisely the pairings on $\V_{\tau_{v}}\otimes\V_{\bar{\tau}_{v}}$ since $\V_{\sigma_{0,v}}=\V_{\tau_{v}}$ and $\nu_{v}$ is unitary.

We therefrom assume that, whenever $\B_{\tau_{v}}$ is specified, by $\B_{\sigma_{0,v}}$ we always mean the pairing $\B_{\sigma_{0,v}}=\B_{\tau_{v}}$. The possible splitting of $v$ in $E$ must also be accounted for in our choice of pairing: we make the convention that if $\B_{\tau_{w}}$ is a specified pairing on $\V_{\tau_{w}}$ (for each place $w$ of $E$ lying above $v$) then
\begin{equation*}
\B_{\tau_{v}}=\otimes_{w\vert v}\,\B_{\tau_{w}}\vspace{0.04in}
\end{equation*}
is the fixed pairing on $(\otimes_{w\vert v}\,\V_{\tau_{w}})\otimes(\otimes_{w\vert v}\,\V_{\bar{\tau}_{w}})$ and hence also on $\V_{\sigma_{0,v}}\otimes\V_{\bar{\sigma}_{0,v}}$.

If $\V_{\sigma_{0,v}}$ carries a pairing $\B_{\sigma_{0,v}}$ and $\sigma_{v}$ is an irreducible, admissible representation above $\sigma_{0,v}$ then we choose to consider a specific pairing on $\V_{\sigma_{v}}$:

\begin{itemize}\vspace{0.05in}
\item If $v\in\Set$ then $\V_{\sigma_{v}}\vert_{H^{0}_{v}}=\V_{\sigma_{0,v}}\oplus\V_{\sigma_{0,v}^{\iota}}$ is irreducible; take the pairing
\begin{equation*}
\B_{\sigma_{v}}\colon (\V_{\sigma_{0,v}}\oplus\V_{\sigma_{0,v}^{\iota}})\otimes (\V_{\bar{\sigma}_{0,v}}\oplus\V_{\bar{\sigma}_{0,v}^{\iota}})\longrightarrow\C
\end{equation*}
given by $\B_{\sigma_{v}}((x +\sigma_{v}(\iota) y),\,(\tilde{x} +\bar{\sigma}_{v}(\iota) \tilde{y}))=\frac{1}{2}(\B_{\sigma_{0,v}}(x,\tilde{x})+\B_{\sigma_{0,v}}(y,\tilde{y}))$.\vspace{0.1in}

\item If $v\not\in\Set$ then $\V_{\sigma_{v}}\vert_{H^{0}_{v}}=\V_{\sigma_{0,v}}$ is irreducible; take $\B_{\sigma_{v}}=\B_{\sigma_{0,v}}$.\vspace{0.05in}
\end{itemize}
This pairing is chosen carefully so that we may factorise the Petersson inner products $\B_{\sigma}$ and $\B_{\sigma_{0}}$ when $\sigma=\otimes_{v}\sigma_{v}$ is an automorphic representation of $H(\A)$ that lies above $\sigma_{0}=\otimes_{v}\sigma_{0,v}$. As before, fix an isomorphism for the conjugate representation $\bar{\sigma}\isom\otimes_{v}\bar{\sigma}_{v}$
\begin{equation*}
\V_{\bar{\sigma}}\isom\bigotimes_{v}\V_{\bar{\sigma}_{v}}=\lim_{\substack{\longrightarrow\\S}}\big(\bigotimes_{v\in S}\V_{\bar{\sigma}_{v}}\big)\otimes\big(\bigotimes_{v\not\in S}\tilde{f}^{\circ}_{v}\big)
\end{equation*}
where, for a sufficiently large set of places $S$ outside which $\bar{\sigma}_{v}$ is unramified, $\tilde{f}^{\circ}_{v}\in\V_{\sigma_{v}}$ is an $H(\mathcal{O}_{v})$-invariant (spherical) vector for $v\not\in S$. If $v\in\Set$ then $\tilde{f}^{\circ}_{v}= \widetilde{\mathfrak{F}}^{\circ}_{v}+\sigma_{v}(\iota)\widetilde{\mathfrak{F}}^{\circ}_{v}$ where $\widetilde{\mathfrak{F}}^{\circ}_{v}$ is an $H^{0}(\mathcal{O}_{v})$-invariant vector and if $v\not\in\Set$ then $\tilde{f}^{0}_{v}=\widetilde{\mathfrak{F}}^{\circ}_{v}$ is $H^{0}(\mathcal{O}_{v})$-invariant.

\begin{lem}\label{lem_pet_factor_assump}
For almost all $v$ suppose that $\B_{\sigma_{0,v}}$ is normalised by $\B_{\sigma_{0,v}}(\mathfrak{F}^{\circ}_{v},\widetilde{\mathfrak{F}}^{\circ}_{v})=1$. Then, if the pairings $\B_{\sigma_{0,v}}$ are normalised so that the Petersson inner product may be factorised as $\B_{\sigma_{0}} = \prod_{v} \B_{\sigma_{0,v}}$, we additionally have the following decomposition:
\begin{equation*}
\B_{\sigma} = \prod_{v} \B_{\sigma_{v}}\,.
\end{equation*}
\end{lem}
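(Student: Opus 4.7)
The identity will be verified by reducing $\B_\sigma$ to $\B_{\sigma_0}$ via the global restriction \eqref{eq_hst_Ltwo}, invoking the hypothesised factorisation of the latter, and then matching with the local pairings $\B_{\sigma_v}$ defined in \S \ref{sec_pairings}. By sesquilinearity of both sides in $(f, \tilde f)$, it suffices to check the formula on pure tensors $f = \otimes_v f_v \in \V_\sigma$ and $\tilde f = \otimes_v \tilde f_v \in \V_{\bar{\sigma}}$. For each $v \in \Set$ I would further decompose $f_v = \alpha_v + \sigma_v(\iota)\beta_v$ with $\alpha_v,\beta_v \in \V_{\sigma_{0,v}}$ (and similarly for $\tilde f_v$), so that every pure tensor expands into a restricted sum whose factors at $v \in \Set$ lie purely in either $\V_{\sigma_{0,v}}$ or $\sigma_v(\iota)\V_{\sigma_{0,v}}$.

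Globally, write $f\vert_{H^0(\A)} = f_1 + f_2$ with $f_1 \in \V_{\sigma_0}$ and $f_2 \in \V_{\sigma_0^\iota}$ per \eqref{eq_hst_Ltwo}; the assumption $\sigma_0 \not\isom \sigma_0^\iota$ forces these two subspaces of $\Ao(H^0)$ to be mutually orthogonal under the Petersson pairing. To transfer the Petersson integral from $H(\A)$ to $H^0(\A)$, split the domain using the coset decomposition $H(\A) = H^0(\A) \cup H^0(\A)\iota$ together with the support bound \eqref{eq_gi_lem2.2} and the $H(F)$-invariance identity $f(h\iota) = f(\Ad(\iota)h)$, valid since $\iota \in H(F)$. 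Performing the change of variables $h \mapsto \Ad(\iota)h$ on the $H^0(\A)\iota$ piece and accounting for the quotient $\mu_2(F) = H(F)/H^0(F)$ of order $2$, one obtains
\[
\B_\sigma(f, \tilde f) \;=\; \tfrac{1}{2}\bigl(\B_{\sigma_0}(f_1, \tilde f_1) + \B_{\sigma_0^\iota}(f_2, \tilde f_2)\bigr).
\]

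Now apply the factorisation $\B_{\sigma_0} = \prod_v \B_{\sigma_{0,v}}$ to each summand, identifying $\B_{\sigma_0^\iota}$ with $\B_{\sigma_0}$ via the same change of variables, and expand in the $\alpha_v,\beta_v,\tilde\alpha_v,\tilde\beta_v$ introduced above. At each $v \in \Set$ the two matched contributions precisely assemble into $\tfrac{1}{2}(\B_{\sigma_{0,v}}(\alpha_v, \tilde\alpha_v) + \B_{\sigma_{0,v}}(\beta_v, \tilde\beta_v))$, which is exactly $\B_{\sigma_v}(f_v, \tilde f_v)$ by the definition in \S \ref{sec_pairings}; at $v \not\in \Set$ we have $\B_{\sigma_v} = \B_{\sigma_{0,v}}$ directly. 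The cross-terms pairing $\alpha_v$ against $\tilde\beta_v$ (or vice versa) vanish by the $\V_{\sigma_0} \perp \V_{\sigma_0^\iota}$ orthogonality, and the global $\tfrac{1}{2}$ is absorbed by the doubling inherent in the two-term sum. The normalisation $\B_{\sigma_{0,v}}(\mathfrak{F}^\circ_v, \widetilde{\mathfrak{F}}^\circ_v) = 1$ at almost all $v$ ensures that the resulting infinite product converges to $\prod_v \B_{\sigma_v}(f_v, \tilde f_v)$.

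The main technical obstacle is the measure-theoretic bookkeeping in the second paragraph: one must confirm that the global $\tfrac{1}{2}$ arising from comparing $\Ax H(F)\bk H(\A)$ with $\Ax H^0(F)\bk H^0(\A)$ precisely matches the combined effect of the local $\tfrac{1}{2}$'s in the definition of $\B_{\sigma_v}$ at $v \in \Set$. Equivalently, one must verify that the parity argument distinguishing the $\V_{\sigma_0}$- and $\V_{\sigma_0^\iota}$-components of a pure tensor agrees uniformly on the restricted tensor product with the local decomposition $f_v = \alpha_v + \sigma_v(\iota)\beta_v$; this balance is what makes the lemma non-trivial despite its elementary appearance.
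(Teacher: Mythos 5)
The paper itself proves this lemma simply by citing \cite[Lemma 2.3]{gan_ichino}, so any self-contained argument is necessarily a different route. The structural ingredients you assemble (orthogonality of $\V_{\sigma_0}$ and $\V_{\sigma_0^\iota}$, the support bound \eqref{eq_gi_lem2.2}, the local decompositions $f_v = \alpha_v + \sigma_v(\iota)\beta_v$ for $v\in\Set$, matching against the explicit form of $\B_{\sigma_v}$ in \S\ref{sec_pairings}) are exactly the right tools. However, your central identity
\begin{equation*}
\B_\sigma(f,\tilde f)\;=\;\tfrac12\bigl(\B_{\sigma_0}(f_1,\tilde f_1)+\B_{\sigma_0^\iota}(f_2,\tilde f_2)\bigr)
\end{equation*}
is not correct, and with it the ensuing expansion fails. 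The source of the error is the claimed coset decomposition $H(\A)=H^0(\A)\cup H^0(\A)\iota$: the quotient $H(\A)/H^0(\A)\isom\mu_2(\A)$ is the restricted product $\prod'_v\mu_2(F_v)$, a compact group of infinitely many elements, not a set of two cosets. The passage from $Z_H(\A)H(F)\bk H(\A)$ to $Z_{H^0}(\A)H^0(F)\bk H^0(\A)$ therefore genuinely introduces an integral over $\mu_2(F)\bk\mu_2(\A)$ (this is precisely Lemma~\ref{lem_int_func}, applied to $H$ and $H^0$ rather than $H_1$ and $H^0_1$), and the Tamagawa normalisation of $\mu_2$ gives the local mass $1/2$ to each element of $\mu_2(F_v)$. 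Those local factors of $1/2$ must land \emph{inside} the Euler product, one for each $v\in S\cap\Set$, to match the definition $\B_{\sigma_v}=\tfrac12(\B_{\sigma_{0,v}}(\alpha_v,\cdot)+\B_{\sigma_{0,v}}(\beta_v,\cdot))$. A single overall factor of $1/2$ cannot.

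To see that the mismatch is not just cosmetic, write $a_v=\B_{\sigma_{0,v}}(\alpha_v,\tilde\alpha_v)$, $b_v=\B_{\sigma_{0,v}}(\beta_v,\tilde\beta_v)$ for $v\in S\cap\Set$. Your formula, combined with the hypothesis $\B_{\sigma_0}=\prod_v\B_{\sigma_{0,v}}$, yields
\begin{equation*}
\B_\sigma(f,\tilde f)\;\overset{?}{=}\;\tfrac12\Bigl(\prod_{v\in S\cap\Set}a_v\;+\;\prod_{v\in S\cap\Set}b_v\Bigr)\cdot\prod_{v\notin S\cap\Set}\B_{\sigma_{0,v}}(f_v,\tilde f_v)\,,
\end{equation*}
whereas the statement of the lemma demands
\begin{equation*}
\B_\sigma(f,\tilde f)\;=\;\prod_{v\in S\cap\Set}\tfrac12(a_v+b_v)\cdot\prod_{v\notin S\cap\Set}\B_{\sigma_{0,v}}(f_v,\tilde f_v)\,.
\end{equation*}
These agree only when $|S\cap\Set|\le 1$ or in degenerate cases (e.g.~$a_v=b_v$ at all but one place), and fail in general: the left-hand side is missing the cross terms $a_{v_1}b_{v_2}$, $b_{v_1}a_{v_2}$, etc. The step where you assert that "the two matched contributions precisely assemble into $\tfrac12(\B_{\sigma_{0,v}}(\alpha_v,\tilde\alpha_v)+\B_{\sigma_{0,v}}(\beta_v,\tilde\beta_v))$" is exactly where this distributivity error occurs; $\tfrac12(\prod a_v+\prod b_v)\ne\prod\tfrac12(a_v+b_v)$. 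To repair the proof, replace the two-coset decomposition with the genuine $\mu_2(\A)$-integral: after reducing to the effective support, the $\varepsilon$-integral factors as $\prod_{v}\int_{\mu_2(F_v)}$, and at each $v\in S\cap\Set$ the local average $\tfrac12\sum_{\varepsilon_v\in\mu_2(F_v)}$ produces the correct $\tfrac12(a_v+b_v)$ in the $v$-th Euler factor. This is the mechanism behind \cite[Lemma 2.3]{gan_ichino}.
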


\begin{proof}
See \cite[Lemma 2.3]{gan_ichino}.
\end{proof}

The Petersson inner products for both the automorphic representations $\sigma_{0}=\sigma_{0}(\nu,\tau)$ and $\tau$ agree: if we have $(\eta,\tilde{\eta})\in\V_{\tau}\otimes\V_{\tau}$ and $f_{0}=\nu\otimes\eta$, $\tilde{f}_{0}=\bar{\nu}\otimes\tilde{\eta}$ then
\begin{equation*}
\B_{\sigma_{0}}(f_{0},\,\tilde{f}_{0})=\B_{\tau}(\eta,\tilde{\eta})\,.
\end{equation*}

The Petersson inner product associated to the unitary Hecke character $\chi$ of $\Ax_{K}$ (trivially) coincides with the Tamagawa number of $\Fx\bk\Kx$, given by $\Vol(\Ax\Kx\bk\Ax_{K})=2$ (see \cite[p.~44]{liu}). Underlying our calculations we choose local pairings $\B_{\chi_{v}}=1$ at all $v$.

\subsection{The Petersson inner product for theta lifts}\label{sec_theta_pairings}

Gan--Ichino prove a decomposition of the Petersson inner product for the theta lift $\theta(\sigma)$ with respect to some specified pairings for the local factors $\theta(\sigma_{v})$. This result assumes that $F$ is a totally real number field and that $\sigma=\otimes_{v}\sigma_{v}$ is an irreducible, cuspidal automorphic representation of $H(\A)$ satisfying Assumption \ref{assump_on_reps}. In particular, in this assumption, conditions (2) and  (3) are used explicitly in the proof of this formula whereas the totally real assumption is required for an application of the Siegel--Weil formula.

Fix a choice of local pairings $\B_{\sigma_{0,v}}$ such that $\B_{\sigma_{0}}=\prod_{v}\B_{\sigma_{0,v}}$ and consider the pairings $\B_{\sigma_{v}}$, defined in \S \ref{sec_pairings}. For $(f,\tilde{f})\in\V_{\sigma}\otimes\V_{\bar{\sigma}}$ and Schwartz functions $(\phi_{v},\tilde{\phi}_{v})\in\V_{\weil_{v}}\otimes\V_{\bar{\weil}_{v}}$ define
\begin{equation}\label{eq_local_theta_pair}
\begin{array}{l}\vspace{0.14in}
\B_{\theta(\sigma_{v})}(\theta(f_{v},\phi_{v}),\theta(\tilde{f}_{v},\tilde{\phi}_{v}))=\hfill\\
\hspace{0.6in}\displaystyle\dfrac{\zeta_{F_{v}}(2)\,\zeta_{F_{v}}(4)}{L(\std,\sigma_{v},1)}\,\int_{H_{1}(F_{v})}\B_{\weil_{v}}(\weil_{v}(h_{v})\phi_{v},\tilde{\phi}_{v})\,\B_{\sigma_{v}} (\sigma_{v}(h_{v})f_{v},\tilde{f}_{v})\, dh_{v}
\end{array}
\end{equation}
where the Haar measures $dh_{v}$ on $H_{1,v}$ are those determined by a differential form (of top degree) on $H_{1}$ and the self-dual Haar measure on $\Fx_{v}$ (with respect to $\psi_{v}$) -- these in fact give the Tamagawa measure $dh=\prod_{v} dh_{v}$ of $H_{1}(\A)$ (as constructed in \cite{weil_tam}).

Gan--Ichino take care in deriving the constant of proportionality between the Petersson inner product for $\theta(\sigma)$ and $\prod_{v}\B_{\theta(\sigma_{v})}$. With Assumption \ref{assump_on_reps} we have \cite[Proposition 7.13]{gan_ichino}:

\begin{equation}\label{eq_GI_global_theta_pair}
\B_{\theta(\sigma)}=\dfrac{L(\std,\sigma,1)}{\zeta_{F}(2)\,\zeta_{F}(4)}\,\prod_{v}\B_{\theta(\sigma_{v})}\,.
\end{equation}

\section{Global Calculation: The Bessel Period}\label{sec_global}

Preliminary remarks aside, we use this section to determine the form of the Bessel period \eqref{eq_bessel_def} for the theta integral \eqref{eq_theta_int}. First of all we explicitly highlight any running assumptions and notations (in addition to those in Assumption \ref{assump_on_reps}).

\subsection{Hypotheses and variables}\label{sec_hypotheses}

We have fixed the (base) number field $F$ to be totally real. This assumption permits the use of the Siegel--Weil formula (or rather its corollary; the Rallis inner product formula) in a calculation made in \cite{gan_ichino} whereby the Petersson inner product for a theta lift is computed in terms of local pairings (see Proposition \ref{eq_GI_global_theta_pair}).

In \S \ref{sec_fourdim} we acquired the following notation an assumptions: $V$ is a four-dimensional quadratic space (over $F$) of discriminant $\disc V=e$; we assume that $e$ is not a square in $\Fx$ (since the case when $e$ is a square has been settled by Liu); Proposition \ref{prop_Xsuffices} implies that it suffices to fix such an $e\in\Fx$ and a (possibly split) quaternion algebra $D$ over $F$ and consider instead the space $X=X_{D,e}$ -- we do this and apply $V=X$ to the notations $H=\GO(V)$ etc.~of \S \ref{sec_groups_notation}; fix once and for all $E=F(\sqrt{e}\,)$ and $B=D(E)\isom D\otimes_{F}E$.

Our result is concerned with irreducible, cuspidal automorphic representations of $\GSpf(\A)$ lifted from $\GO(V)(\A)$ by the theta correspondence (\S \ref{sec_theta_section}).

\begin{assump}\label{assump_triv_cenchar}
We only consider representations of $\PGSpf(\A)\isom\SO_{5}(\A)$; these are precisely the representations of $\GSpf(\A)$ with \textit{trivial central character}.
\end{assump}

Note that the theta lift $\theta(\sigma)$ has central character $\omega_{\theta(\sigma)}=\omega_{\sigma}$ so we assume $\omega_{\sigma}=1$. If $\sigma$ lies above $\sigma_{0}=\sigma_{0}(\nu,\tau)$, as in \eqref{eq_hst_Ltwo}, then $\nu=\omega_{\sigma}^{-1}=1$. For the remainder of this paper, we keep in mind a fixed irreducible, cuspidal automorphic representation $\sigma\isom\otimes_{v}\sigma_{v}$ of $H(\A)$ (in the space $\V_{\sigma}$) lying above $\sigma_{0}=\sigma_{0}(1,\tau)$ where $\tau\isom\otimes_{w}\tau_{w}$ is an irreducible, cuspidal automorphic representation of $\Bx(\A)$ whose central character $\omega_{\tau}=1$. Also fix a factorisation for the conjugate representation $\bar{\sigma}\isom\otimes_{v}\bar{\sigma}_{v}$. There exists a set of places $\Set=\left\lbrace\,v\,:\,\sigma_{v}\isom\sigma_{v}\otimes\sgn\,\right\rbrace$ which determine $\sigma$ uniquely given $\sigma_{0}$ (see \S \ref{sec_autm_forms_H}).

Let $f=\otimes_{v}f_{v}\in\V_{\sigma}$ be a pure tensor, fixing this choice throughout the remainder of this paper. We identify a factorisation for the conjugate of $f$ by 
\begin{equation}\label{eq_fix_conjugate_factors}
\bar{f}=\otimes_{v}\bar{f}_{v}
\end{equation}
so that it makes sense to talk about a specific $\bar{f}_{v}$ corresponding to a local factor $f_{v}$ of $f$. Similarly, we fix factorisations for the Schwartz functions $\phi=\otimes_{v}\phi_{v}\in\V_{\weil}$ and $\bar{\phi}=\otimes_{v}\bar{\phi}_{v}\in\V_{\bar{\weil}}$. 

Choose a series of local unitary pairings $\B_{\tau_{w}}$ on $\V_{\tau_{w}}\otimes\V_{\bar{\tau}_{w}}$, for each place $w$ of $E$, such that the Petersson pairing has the factorisation $\B_{\tau}=\prod_{w}\B_{\tau_{w}}$. Due to the choices of \S \ref{sec_pairings}, we then automatically obtain the pairings $\B_{\sigma_{0,v}}$ and $\B_{\sigma_{v}}$ for $\sigma_{0,v}$ and $\sigma_{v}$, respectively. Note that these depend on the place $v$ of $F$. The Petersson pairings will satisfy a similar factorisation
\begin{equation}\label{eq_choose_sigma_pairing}
\B_{\sigma_{0}}=\prod_{v}\B_{\sigma_{0,v}}\quad {\rm and}\quad \B_{\sigma}=\prod_{v}\B_{\sigma_{v}}\,.
\end{equation}

Fix another non-square element $d\in\Fx$. Let $K=F(\sqrt{d}\,)$ and define a Hecke character $\chi\colon\Kx\bk\Ax_{K}\rightarrow\Cx$. Then $K$ and $\chi$ index a unique Bessel period (see \S \ref{sec_bessel_period}). We impose the following assumption, which is essentially the trivial central character assumption when considering $\chi$ as a representation of $\GSO(X)$.

\begin{assump}\label{assump_on_K_chi}
Suppose that $\chi$ is unitary and satisfies $\chi\vert_{\Ax_{F}}=1$.
\end{assump}

\subsection{Explicit vectors}\label{sec_explicit_vectors}
We shall consider vectors $\varphi=\theta(f,\phi)$ for $f\in\V_{\sigma}$ and $\phi\in\V_{\weil}$ such that $\varphi=\otimes_{v}\varphi_{v}$ is a pure tensor. The global map $\theta$ of \eqref{eq_theta_equi_map} is linear in each variable and hence $\varphi$ is a pure tensor when both $f=\otimes_{v}f_{v}$ and $\phi=\otimes_{v}\phi_{v}$ are pure tensors (as we have assumed). We fix the notation $\varphi_{v}=\theta(f_{v},\phi_{v})$ for the local components in the factorisation of $\theta(f,\phi)$ (noting that this is necessary as each local map $\theta$ \eqref{eq_unique_map} is only unique up to a scalar constant).

Our choice of local vectors $\bar{f}_{v}\in\V_{\bar{\sigma}_{v}}$ (see \eqref{eq_fix_conjugate_factors}) and $\bar{\phi}_{v}\in\V_{\bar{\weil}_{v}}$ give rise to the factors in $\bar{\varphi}=\otimes_{v}\bar{\varphi}_{v}$ in the sense that
\begin{equation}\label{eq_fix_theta_conj_factors}
\bar{\varphi}_{v}=\overline{\theta(f_{v},\phi_{v})}=\theta(\bar{f}_{v},\bar{\phi}_{v})
\end{equation}
by the uniqueness of \eqref{eq_unique_map} and \eqref{eq_theta_equi_map}, the choice of vectors $\varphi_{v}=\theta(f_{v},\phi_{v})$ and then applying \cite[Proposition 5.5]{gan_ichino}.

\begin{lem}\label{lem_trick_with_iota}
Define $f^{\iota}(h)=f(h\iota)$. One has $\theta(f,\phi)=\theta(f^{\iota},\weil(\iota)\phi)$. 
\end{lem}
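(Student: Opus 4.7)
The plan is to exploit the fact, noted in the text right after the definition of the theta integral, that $\theta(f,\phi;g)$ does not depend on the particular choice of auxiliary element $h_g \in H(\mathbb{A})$ satisfying $\lambda(h_g) = \lambda(g)$. Since $\lambda(\iota)=1$ by \eqref{eq_first_mention_iota}, replacing $h_g$ by $h_g\iota$ is still a valid choice; on the other hand, because $\iota^2 = 1$, the apparent insertion of $\iota$ will be exactly compensated by the twists $f \leadsto f^\iota$ and $\phi \leadsto \weil(\iota)\phi$ built into the right-hand side.

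Concretely, I would unwind the definition
\[
\theta(f^\iota,\weil(\iota)\phi;g) \;=\; \int_{H_1(F)\backslash H_1(\mathbb{A})} \sum_{x\in V^2(F)} \weil(g, h h'_g)\bigl(\weil(\iota)\phi\bigr)(x)\, f^\iota(h h'_g)\, dh
\]
and make the choice $h'_g = h_g\iota$, which is legitimate since $\lambda(h_g\iota) = \lambda(h_g)\lambda(\iota)=\lambda(g)$. Interpreting $\weil(\iota)$ as the action of $(1,\iota)\in Y$ (legitimate because $\lambda(1)=\lambda(\iota)=1$), the group law for $\weil$ gives
\[
\weil(g, h h_g\iota)\,\weil(1,\iota) \;=\; \weil\bigl((g,h h_g\iota)\cdot(1,\iota)\bigr) \;=\; \weil(g, h h_g\iota^2) \;=\; \weil(g, h h_g),
\]
using $\iota^2 = 1$. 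By the very definition of $f^\iota$, we likewise have $f^\iota(h h_g \iota) = f(h h_g\iota\cdot\iota) = f(h h_g)$. Substituting both identities collapses the integral to the original expression for $\theta(f,\phi;g)$.

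Two small points to verify along the way: first, the integration variable $h$ ranges over $H_1(F)\backslash H_1(\mathbb{A})$ and is not affected by the substitution; second, the summation set $V^2(F)$ is preserved, because the sum is taken before evaluating the Schwartz function, and any reinterpretation of $\weil(\iota)\phi$ as $\phi\circ\iota$ on the argument would merely permute $V^2(F)$. The main (and essentially only) obstacle is verifying that $\weil(\iota) = \weil(1,\iota)$ is the intended meaning and that the composition rule applies; once this is granted, the statement is an almost formal consequence of $\iota^2 = 1$ together with the independence of $\theta(f,\phi;g)$ from the choice of $h_g$.
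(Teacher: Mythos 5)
Your argument is correct, and it is in fact a cleaner route than the one in the paper. Both proofs hinge on the freedom in choosing the auxiliary element $h_g$, but the paper picks $h_g' = \iota h_g \iota$ and then must also (i) invoke the left $H(F)$-invariance of $f$ to insert a spurious $\iota$, (ii) perform the change of variables $h\mapsto\iota h\iota$ on $H_1(F)\backslash H_1(\mathbb{A})$ and justify invariance of the Tamagawa measure under conjugation, and (iii) relabel the lattice sum by $x\mapsto\iota x$. Your choice $h_g' = h_g\iota$ collapses everything immediately through $\iota^2=1$: the group law $\weil(g,hh_g\iota)\weil(1,\iota)=\weil(g,hh_g)$ neutralises the twist on $\phi$, and $f^\iota(hh_g\iota)=f(hh_g)$ is purely the definition of $f^\iota$ — you do not even need the automorphy of $f$ or any measure/summation manipulation. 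The one point you correctly flagged as needing care, namely that $\weil(\iota)$ means $\weil(1,\iota)$ and that $\weil$ is a genuine (not merely projective) representation of $Y(\mathbb{A})$ so the composition rule applies, is indeed the only structural input; this holds here because $\dim V$ is even and the metaplectic cover splits over $Y$, exactly as in the paper's setup following \cite{harris_kudla}. In short: same governing idea (independence from the choice of $h_g$), but your execution is more economical.
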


\begin{proof}
We compute
\begin{equation*}
\begin{array}{rcl}
\theta(f^{\iota},\weil(\iota)\phi;g)&=&\displaystyle\int_{H_{1}(F)\bk H_{1}(\A)}\,\sum_{x\in X^{2}(F)}\weil(g,hh_{g}\iota)\phi(x)\,f(\iota hh_{g}\iota)\,dh\,.\\\vspace{0.07in}
&=&\displaystyle\int_{H_{1}(F)\bk H_{1}(\A)}\,\sum_{\iota x\in X^{2}(F)}\weil(g,\iota h\iota h_{g}\pr)\phi(\iota x)\,f(\iota h\iota h_{g}\pr)\,dh\\
&=&\theta(f,\phi;g)
\end{array}
\end{equation*}
where $h_{g}\pr=\iota h_{g}\iota$ has $\lambda(h_{g}\pr)=\lambda(g)$. Here we use the automorphy of $f$ under $\iota\in\mu_{2}(F)$ and rearrange the summation by $x\mapsto \iota x$. The Tamagawa measure $dh$ is invariant under the transformation $h\mapsto \iota h\iota$.
\end{proof}

Since an arbitrary element $f$ of $\V_{\sigma}$ is of the form $f=f_{1}+f_{2}^{\iota}$ for some $f_{1},f_{2}\in\V_{\sigma,S}^{1}$ (by \eqref{eq_hst_Ltwo}), Lemma \ref{lem_trick_with_iota} implies
\begin{equation*}
\theta(f_{1}+f_{2}^{\iota}, \phi)=\theta(f_{1},\phi)+\theta(f_{2},\weil(\iota)\phi)\,.
\end{equation*}
There is then no loss in generality in restricting our choice of $f\in\V_{\sigma}$ to the following.

\begin{assump}
For a fixed, finite set $S$, assume that $f=\otimes_{v}f_{v}\in\V_{\sigma,S}^{1}$ is a pure tensor. Such an $f$ satisfies the property that $f\vert_{H^{0}(\A)}\in\V_{\sigma_{0,v}}$.
\end{assump}

Recalling that $\tau$ is the automorphic representation of $\Bx(\A)$ such that $\sigma_{0}=\sigma_{0}(1,\tau)$, we denote by
\begin{equation*}
\eta=\otimes_{w}\eta_{w}\in\V_{\tau}
\end{equation*}
(decomposed over places $w$ of $E$) the function such that
\begin{equation*}
f(\rho(s,a))=\eta(a)\,.
\end{equation*}
The local factors of these functions are identified by $f_{v}=\otimes_{w\vert v}\eta_{w}$ (see \S \ref{sec_factorise_reps}). Note that $f^{\iota}=\sigma(\iota)f$, and since $\theta(f,\phi)=\otimes_{v}\theta(f_{v},\phi_{v})$, Lemma \ref{lem_trick_with_iota} implies that for each $v$
\begin{equation*}
\theta(f_{v},\phi_{v})=\theta(\sigma_{v}(\iota)f_{v},\weil_{v}(\iota)\phi_{v})\,.
\end{equation*}

\subsection{A calculation in terms of the variant theta integral}\label{sec_cal_in_var}

To simplify matters (overall) we introduce the \textit{variant} theta integral (to be compared with \eqref{eq_theta_int}):

\begin{equation*}
\theta^{0}(f,\phi;g)=\int_{H^{0}_{1}(F)\bk H^{0}_{1}(\A)}\,\sum_{x\in X^{2}(F)}\weil(g,h_{0}h_{g})\phi(x)\,f(h_{0}h_{g})\,dh_{0}
\end{equation*}
where the domain is defined in terms of the connected, index-two subgroup $H^{0}_{1}$ of $H_{1}$. For this function we also have
\begin{equation}\label{eq_theta0_inv_iota}
\theta^{0}(f,\phi)=\theta^{0}(f^{\iota},\weil(\iota)\phi)
\end{equation}
by a computation identical to Lemma \ref{lem_trick_with_iota}. Observe how $\theta^{0}(f,\phi)$ is related to $\theta(f,\phi)$.

\begin{lem}\label{lem_int_func}
For any integrable function $\Phi$ on $H_{1}(F)\bk H_{1}(\A)$ we have
\[\displaystyle\int_{H_{1}(F)\bk H_{1}(\A)}\Phi(h)\,dh=\displaystyle\int_{\mu_{2}(F)\bk\mu_{2}(\A)}\int_{H^{0}_{1}(F)\bk H^{0}_{1}(\A)}\,\Phi(h_{0}\varepsilon)\,dh_{0}\,d\varepsilon\]
where $d\varepsilon$ is the Tamagawa measure on $\mu_{2}(\A)$.
\end{lem}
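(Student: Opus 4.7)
The plan is to exploit the semidirect-product decomposition $H_{1} = H_{1}^{0} \rtimes \mu_{2}$ induced by the splitting generated by $\iota$ (fixed in \S \ref{sec_quad_spaces}). Passing to $\A$-points, this gives a bijection
\begin{equation*}
H_{1}^{0}(\A) \times \mu_{2}(\A) \longrightarrow H_{1}(\A), \qquad (h_{0}, \varepsilon) \longmapsto h_{0}\varepsilon,
\end{equation*}
and analogously on $F$-points. First I would check how $H_{1}(F)$-left translation is transported: given $h_{0}' \varepsilon' \in H_{1}(F)$, the product $(h_{0}' \varepsilon')(h_{0} \varepsilon) = h_{0}'(\varepsilon' h_{0} \varepsilon'^{-1})\varepsilon' \varepsilon$ corresponds to the action
\begin{equation*}
(h_{0}, \varepsilon) \longmapsto \bigl(h_{0}' \cdot \varepsilon' h_{0} \varepsilon'^{-1},\ \varepsilon' \varepsilon\bigr),
\end{equation*}
i.e.\ left translation on the first factor, combined with a (measure-preserving) conjugation and a left translation on the second factor.

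Next I would verify the measure decomposition. Since $\mu_{2}$ is $0$-dimensional as an algebraic group, the Lie algebras of $H_{1}$ and $H_{1}^{0}$ coincide, so a nowhere-vanishing top-degree differential form on $H_{1}^{0}$ extends canonically to one on $H_{1}$. Hence the Tamagawa measure $dh$ on $H_{1}(\A)$ decomposes, under the above bijection, as $dh_{0} \otimes d\varepsilon$, where $dh_{0}$ is the Tamagawa measure on $H_{1}^{0}(\A)$ and $d\varepsilon$ is the Tamagawa measure on $\mu_{2}(\A)$ (which at each place $v$ reduces to a normalised counting measure on $\mu_{2}(F_{v})$).

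Finally I would apply Fubini in two stages. Unfold the normal subgroup $H_{1}^{0}(F)$ first: since it acts only on the first factor by left translation, the integral becomes
\begin{equation*}
\int_{\mu_{2}(F) \bk (\, (H_{1}^{0}(F) \bk H_{1}^{0}(\A)) \times \mu_{2}(\A)\,)} \Phi(h_{0} \varepsilon) \, dh_{0} \, d\varepsilon,
\end{equation*}
where the residual $\mu_{2}(F)$-action is by conjugation (which preserves $dh_{0}$) on the first quotient and by translation on the second. A second application of Fubini separates the two integrations and yields the claimed identity.

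The only real obstacle is the bookkeeping around the Tamagawa measure on the finite group scheme $\mu_{2}$; once one agrees on the right normalisation (so that the product measure on $H_{1}^{0}(\A) \times \mu_{2}(\A)$ matches $dh$ on $H_{1}(\A)$), the lemma is a direct consequence of the semidirect-product structure and Fubini's theorem.
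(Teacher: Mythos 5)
The paper states this lemma without proof, so there is no written argument to compare against; I can only assess your proposal on its own merits. Your approach --- decompose via the semidirect product $H_{1}=H_{1}^{0}\rtimes\mu_{2}$ fixed by the section through $\iota$, transport the Haar measure, and apply Fubini in two stages, first unfolding $H_{1}^{0}(F)$ and then $\mu_{2}(F)$ --- is the natural one, and the two Fubini steps are correct as you describe them: the residual $\mu_{2}(F)$-action on $\mu_{2}(\A)$ is free (left translation), the conjugation action on $H_{1}^{0}(F)\bk H_{1}^{0}(\A)$ preserves $dh_{0}$, and diagonal $\mu_{2}(F)$-invariance of the integrand makes the fibre integral independent of the choice of lift. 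The one place you should be more careful, and which you rightly flag as the only real obstacle, is the measure decomposition. The coincidence of Lie algebras shows that an invariant top-degree form on $H_{1}^{0}$ is the restriction of one on $H_{1}$, hence $dh\vert_{H_{1}^{0}(\A)}=dh_{0}$; but this by itself does not identify the quotient measure on $H_{1}^{0}(\A)\bk H_{1}(\A)\cong\mu_{2}(\A)$ with the Tamagawa measure $d\varepsilon$ on the zero-dimensional group scheme $\mu_{2}$, where the Weil construction requires its own normalisation (indeed the na\"ive product of local counting measures does not converge on $\mu_{2}(\A)$). To close this gap one should match the local factorisation $dh_{v}=dh_{0,v}\otimes d\varepsilon_{v}$ against the normalisation of $d\varepsilon_{v}$ the paper actually uses later, namely the normalised counting measure appearing in \S\ref{sec_reform_form}. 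With that bookkeeping supplied, the proof is complete.
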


Since $\theta^{0}(f,\phi;g)$ is independent of a particular choice of $h_{g}$ we may apply Lemma \ref{lem_int_func} and substitute $h_{g}\mapsto\varepsilon h_{g}\varepsilon$ (as $\lambda(\varepsilon)=1$) to find
\begin{equation}\label{eq_variant_formula}
\theta(f,\,\phi;g)=\displaystyle\int_{\mu_{2}(F)\bk\mu_{2}(\A)}\theta^{0}(\sigma(\varepsilon) f,\,\weil(\varepsilon)\phi;g)\,d\varepsilon\,.
\end{equation}
This relation permits one to consider the refined quantity $\Pe(\theta^{0}(f,\phi),\chi)$.

\subsection{Unfolding the Weil representation}\label{sec_unfolding} By definition (see \eqref{eq_bessel_def}) we have

\begin{equation*}\label{eq_period1}
\Pe(\theta^{0}(f,\phi),\chi)=\displaystyle\int_{\Ax T(F)\bk T(\A)}\int_{U(F)\bk U(\A)}\theta^{0}(f,\phi;u\hat{g})\,\chi(g)\,\psi^{-1}_{S}(u)\,du\,dg
\end{equation*}
so we start out by computing

\begin{equation*}
\theta^{0}(f,\phi;u\hat{g})=\int_{H^{0}_{1}(F)\bk H^{0}_{1}(\A)}\,\sum_{x\in X^{2}(F)}\weil(u\hat{g},h_{0}h_{g})\phi(x)\,f(h_{0}h_{g})\,dh_{0}\,.
\end{equation*}
Applying the action of $\weil$ to $\phi=\otimes_{v}\phi_{v}$ (place-by-place)  we find that

\begin{equation*}
\begin{array}{rcl}\vspace{0.1in}
\displaystyle\weil(u\hat{g},h_{0}h_{g})\,\phi(x)&=&\bigg(\displaystyle\prod_{v}\,\chi_{V,v}(\det(g_{v}))\,\abs{\lambda(g_{v})}_{v}^{-2}\,\abs{\det g_{v}}^{2}_{v}\bigg)\,\psi_{M_{x}}(u)\,\phi(h_{g}^{-1}h_{0}^{-1}xg)\\
&=&\psi_{M_{x}}(u)\,\phi(h_{g}^{-1}h_{0}^{-1}xg)\,,
\end{array}
\end{equation*}
recalling $\prod_{v}\,\chi_{V,v}(\det(g_{v}))=1$ (by quadratic reciprocity) and $\psi_{M_{x}}$ is the character of $U$ defined in \eqref{eq_uni_char_def}. On removing the factor containing the integral over $U(F)\bk U(\A)$ we obtain

\begin{equation*}
\Pe(\theta^{0}(f,\phi),\chi)=\displaystyle\int_{\Ax T(F)\bk T(\A)} \int_{H^{0}_{1}(F)\bk H^{0}_{1}(\A)}\,\sum_{x\in X^{2}(F)}\,\phi(h_{g}^{-1}h_{0}^{-1}xg)\,f(h_{0}h_{g})\,\Phi(x)\,dh_{0}\,dg
\end{equation*}
where we have introduced the notation

\begin{equation*}
\Phi(x)=\displaystyle\int_{U(F)\bk U(\A)}\psi_{M_{x}}(u)\psi^{-1}_{S}(u)\,du\,.
\end{equation*}
This integral of orthogonal characters simply boils down to

\begin{equation}\label{eq_Phi_orthogonality}
\Phi(x)=\left\lbrace\begin{array}{cl}\vspace{0.05in}
\Vol(U(F)\bk U(\A))&\psi_{M_{x}}=\psi_{S}\\
0&{\rm otherwise}\,.
\end{array}\right.
\end{equation}
The group $U$ is abelian (and hence unimodular) so the Tamagawa number is immediately $\Vol(U(F)\bk U(\A))=1$ (see \cite{weil_tam}). Writing $u=u(A)$ for $A\in\Msymt(\A)$ we then have

\begin{equation*}
\begin{array}{rcl}\vspace{0.05in}
\Phi(x)=1& \Longleftrightarrow& \psi(\Tr(SA-M_{x}A))=1\\
&\Longleftrightarrow& M_{x}=S\,.
\end{array}
\end{equation*}
Thus $\Phi(x)$ is an indicator function allowing only those $x\in X^{2}(F)$ with $M_{x}=S$ to contribute non-zero terms to the summation in $\Pe(\theta^{0}(f,\phi),\chi)$. Define

\begin{equation*}
X^{2}_{S}=\left\lbrace\, x\in X^{2}\,:\, M_{x}=S\,\right\rbrace
\end{equation*}
so that

\begin{equation*}\label{eq_period2}
\Pe(\theta^{0}(f,\phi),\chi)=\displaystyle\int_{\Ax T(F)\bk T(\A)} \int_{H^{0}_{1}(F)\bk H^{0}_{1}(\A)}\,\sum_{x\in X_{S}^{2}(F)}\,\phi(h_{g}^{-1}h^{-1}xg)\,f(hh_{g})\,dh\,dg\,.
\end{equation*}

We are interested in decomposing the algebra $B\isom D\otimes E$ into its subalgebras, in particular the role played by the field $L\isom K\otimes E$. Hence we make the following observation.

\begin{prop}\label{prop_L_emblong_B}
If $L$ does not embed into $B$ as a subalgebra, then $X_{S}^{2}(F)=\emptyset$ and consequently
\begin{equation*}
\Pe(\theta^{0}(f,\phi),\chi)=0\,.
\end{equation*}
\end{prop}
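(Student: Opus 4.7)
The conclusion $\Pe(\theta^0(f,\phi),\chi) = 0$ whenever $X_S^2(F) = \emptyset$ is immediate: the formula for $\Pe(\theta^0(f,\phi),\chi)$ obtained at the end of \S \ref{sec_unfolding} features an inner sum indexed by $X_S^2(F)$, which is vacuous when this set is empty. The substantive content is therefore the contrapositive: if $X_S^2(F) \neq \emptyset$, then $L = K \otimes_F E$ admits an $F$-algebra embedding into $B = D(E)$.

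Pick $x = (x_1, x_2) \in X_S^2(F)$. Unpacking $M_x = S$ via the bilinear form $(y, y')_V = y (y')^\iota + y' y^\iota$ on $X \subset B$ (arising from $N_X(y) = y y^\iota$) yields, up to standard normalisation, the identities $x_1 x_1^\iota = a$, $x_2 x_2^\iota = c$, and $x_1 x_2^\iota + x_2 x_1^\iota = b$. The decisive move is to form the element $z = x_1 x_2^\iota \in B$. Since $\iota$ is an anti-involution on $B$, we have $z^\iota = x_2 x_1^\iota$, hence
\begin{equation*}
z + z^\iota = b \in F, \qquad z z^\iota = x_1 (x_2 x_2^\iota) x_1^\iota = c \cdot x_1 x_1^\iota = ac \in F,
\end{equation*}
using that $c \in F \subset E = Z(B)$. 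Therefore $z$ satisfies $X^2 - bX + ac \in F[X]$, a polynomial of discriminant $d = b^2 - 4ac$. As $d \not\in (\Fx)^2$ this polynomial is irreducible over $F$, so $F[z] \subset B$ is an $F$-subalgebra isomorphic to $K$.

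Since $E$ lies in the centre of $B$ it commutes with $F[z]$, and the product $F[z]\cdot E$ is a commutative $F$-subalgebra of $B$. In the generic case $K \neq E$, the subfields $F[z]$ and $E$ are linearly disjoint over $F$, so $F[z]\cdot E \isom K \otimes_F E = L$, completing the embedding. The main obstacle is the degenerate case $K = E$ (i.e.\ $de \in (\Fx)^2$): here $z$ in fact lies in $E$ itself, so $F[z]\cdot E = E$ and the direct construction collapses; moreover $L \isom K \otimes_F K \isom K \times K$ is no longer a field, and $L \emb B$ becomes equivalent to $B$ splitting as an $E$-algebra. I expect this to be the trickiest point: one must show that the mere existence of an isometric embedding $(W_1, q_S) \emb (X, N_X)$ already forces $B$ to split, which can be approached by analysing the orthogonal complement $\Span(x_1, x_2)^{\bot}$ inside $X$ and exhibiting either an isotropic vector there or, equivalently, a zero-divisor in $B$.
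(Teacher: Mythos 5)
Your route is genuinely different from the paper's and, for the core algebraic step, more direct: the paper first uses Lemma~\ref{lem_W1_isom_K} to realise $W_{1}$ as a $K$-line $Kw\subset X$, then decomposes $B=(W_{1}\otimes E)\oplus(W_{1}^{\perp}\otimes E)$ and case-splits on whether $1\in W_{1}$, whereas you bypass the subspace geometry and build the embedding at the level of the algebra: manufacture $F[z]\isom K$ from $z=x_{1}x_{2}\inv$ and tensor with the central $E$. The algebraic kernel, namely that $x_{1}x_{2}\inv$ (the paper's $\xi_{2}\xi_{1}\inv$, up to the involution) satisfies $X^{2}-bX+ac$ of discriminant $d$, is exactly the computation the paper performs inside Lemma~\ref{lem_W1_isom_K}; you have simply cut out the middleman. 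What the paper's route buys is consistency with the surrounding structural framework ($W_{1}\isom K$, $W_{1}^{\perp}\isom X_{L}$, etc.\ as used in \S\ref{sec_global2}), while your route is shorter and more self-contained for this one proposition.

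The gap you flag at the end is real, but your diagnosis of it is slightly off, and the fix is easier than you suggest. In the degenerate case $K\isom E$ it is \emph{not} automatic that $z\in E$: when $B$ is split one can perfectly well have $z\notin E$, in which case your main argument already goes through unchanged ($z\notin E$ gives $F[z]\cap E=F$, so $F[z]\cdot E$ is a four-dimensional commutative $F$-subalgebra of $B$ isomorphic to $K\otimes_{F}E=L$ --- even when $L\isom E\times E$). The correct dichotomy is $z\in E$ versus $z\notin E$, not $K=E$ versus $K\neq E$. Moreover, when $z\in E$ one does not merely lose the construction --- one obtains an outright contradiction with $x\in X_{S}^{2}(F)$, which is precisely what is wanted. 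Indeed, from $z=x_{1}x_{2}\inv$ one gets $x_{1}=(z/\N_{X}(x_{2}))\,x_{2}$; but for $\lambda\in E$ one has $\lambda x_{2}\in X$ iff $\lambda\in F$ (compare $\kappa(\lambda x_{2})=\lambda^{\kappa}x_{2}\inv$ with $(\lambda x_{2})\inv=\lambda x_{2}\inv$ and cancel $x_{2}\inv$), so $z/\N_{X}(x_{2})\in F$, hence $z\in F$, contradicting irreducibility of $X^{2}-bX+ac$ over $F$. Thus $z\notin E$ holds automatically for every $x\in X_{S}^{2}(F)$, the linear-disjointness step never fails, and the case split on $K\isom E$ can be dropped altogether. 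With that observation in place your argument is complete and arguably cleaner than the paper's, whose second case asserts without comment that $J=W_{1}^{\perp}\otimes E$ ``is a field'' --- a claim that would itself need attention when $K\isom E$.
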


\begin{proof}
Suppose $L\not\emb B$ and assume the contrary: there exists $\xi\in X_{S}^{2}(F)$ with $\xi\neq 0$. Then $\xi$ gives a realisation of $W_{1}$ as a quadratic subspace of $X$ and we have $X=W_{1}\oplus W_{1}^{\perp}$ as before. Since $E\cap X = F$ we have that $X\otimes E \isom B$, so we may decompose $B$ as
\begin{equation*}
B= (W_{1}\otimes E)\oplus (W_{1}^{\perp}\otimes E).
\end{equation*}
But Lemma \ref{lem_W1_isom_K} gives us that $W_{1}=Kw$ for any $w\in W_{1}$. Noting that $1\in X$ we proceed by checking two cases: Firstly, if $1\in W_{1}$ we may take $w=1$ so that $W_{1}=K$. Then $W_{1}\otimes E= L$ and $B=L\oplus L^{\perp}$. Thus $L\emb B$ as a quadratic subalgebra (over $E$), a contradiction. Secondly, if $1\not\in W_{1}$ then $J=W_{1}^{\perp}\otimes E$ is a field and subalgebra of $B$. In fact this field has to be $L$: for any $j\in J^{\perp}=W_{1}\otimes E$ we may write $J^{\perp} = Jj$ but $W_{1}=Kw$ implies $Jj=Lw$ for any $w\in W_{1}\subset J^{\perp}$. Taking $j=w$ gives $J=L$ and thus, once again, we have the contradiction $L\emb B$.
\end{proof}

\begin{assump}\label{assump_on_embedding}
Without loss in generality we assume that $X_{S}^{2}(F)\neq\emptyset$.
\end{assump}

Indeed it is clear from \eqref{eq_Phi_orthogonality} that $X_{S}^{2}(F)=\emptyset$ implies $\Pe(\theta^{0}(f,\phi),\chi) = 0$. Under Assumption \ref{assump_on_embedding} we may conclude that, by Proposition \ref{prop_L_emblong_B}, one has an algebra-embedding $L\emb B$ and subsequently that $K\emb D$ as a subalgebra too. Note that this assumption is truly on the choice of $K$ (or equivalently $d$) since $E$ has been fixed in advance.

We continue by expressing $X^{2}_{S}$ in terms of the group $\SO(X)$ acting on it, reconsidering points of $X^{2}_{S}$ via the isomorphism $X^{2}(F)\isom\Hom_{F}(W_{1},X)$. Fix a base point $\xi\in X_{S}^{2}(F)$, to be considered as an $F$-homomorphism $\xi\colon W_{1}\rightarrow X$ satisfying the properties:

\begin{itemize}\vspace{0.05in}
\item[(1)] $\xi$ is injective (since the Gramm matrix $M_{\xi}=S$ is invertible).\vspace{0.05in}
\item[(2)] $\xi$ is an isometry onto its image in $X$.\vspace{0.05in}
\end{itemize}
We briefly justify (2). Recall that (\S \ref{sec_the_torus}) $W_{1}$ is endowed with the quadratic form $q_{S}$; a simple calculation shows that for $w\in W_{1}$ we have $q_{S}(w)=q_{M_{\xi}}(w)=\N_{X}(\xi(w))$. Thus $W_{1}$ is identified with a quadratic subspace of $X$ via $\xi$. (We abuse notation and call this subspace $W_{1}$ too.) Consider the orthogonal decomposition
\begin{equation*}
X=W_{1}\oplus W_{1}^{\perp}\,.
\end{equation*}

\begin{lem}\label{lem_W1_isom_K}
The image of $W_{1}$ in $X$ is a one-dimensional $K$-vector space: for any $w\in W_{1}$ we have $W_{1}=Kw$. In particular there is an $F$-vector space isomorphism $W_{1}\isom K$.
\end{lem}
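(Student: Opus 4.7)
The plan is to exhibit a natural $K$-module structure on $W_{1}$ coming from its torus of similitudes, and then observe that dimensional considerations force $W_{1}$ to be a one-dimensional $K$-vector space.

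First I would recall the explicit description of $T = \GSO(W_{1})$ from \S\ref{sec_the_torus}: the group $T(F)$ was shown to be the unit group of the commutative $F$-subalgebra
\begin{equation*}
\mathcal{K} = \left\{ x + y \begin{pmatrix} b/2 & c \\ -a & -b/2 \end{pmatrix} : x, y \in F \right\} \subset \M_{2}(F),
\end{equation*}
and the given $F$-algebra isomorphism $\mathcal{K} \isom K = F(\sqrt{d}\,)$ sends the generator $J = \left(\begin{smallmatrix} b/2 & c \\ -a & -b/2 \end{smallmatrix}\right)$ to $\sqrt{d}/2$ (one checks $J^{2} = (d/4)\cdot 1_{2}$). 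Since $W_{1} = F^{2}$ is tautologically a module over $\M_{2}(F)$ by matrix multiplication, the subalgebra $\mathcal{K}$ acts on $W_{1}$, endowing it with the structure of a $K$-module.

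Next I would observe that this $K$-action is non-zero (the identity acts as the identity) and $\dim_{F} W_{1} = 2 = \dim_{F} K$; since $K$ is a field, $W_{1}$ must be a free $K$-module of rank one. Hence for any non-zero $w \in W_{1}$, scalar multiplication gives an $F$-linear bijection $K \rightarrow W_{1}$, $k \mapsto kw$, so $W_{1} = Kw$ and in particular $W_{1}\isom K$ as $F$-vector spaces. Under the embedding $\xi : W_{1} \emb X$, one transports this $K$-action to $\xi(W_{1}) \subset X$, proving the claim.

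The only subtle point, which I expect to be the main obstacle in writing this up carefully, is to make sure that the $K$-action transported to $\xi(W_{1}) \subset X$ is compatible with how the proof of Proposition \ref{prop_L_emblong_B} uses it — namely, that $Kw$, viewed inside $X$ (and then inside $B = X \otimes_{F} E$ via the algebra structure of $B$), coincides with the subspace obtained by multiplying $w$ by the image of $K$ under the embedding $K \emb D \subset B$ (which exists because $T \isom K^{\times}$ embeds into $\GSO(X)$ via \eqref{eq_GSO(X)_isom} and hence factors through $\Dx$). Once this identification is in place, the dimensional argument above completes the proof.
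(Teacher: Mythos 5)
There is a genuine gap, and it is exactly the point you flag at the end as ``the only subtle point'' — but that point is not a detail to be smoothed over, it \emph{is} the content of the lemma. The assertion ``$W_{1}=Kw$'' in the statement is about multiplication inside the quaternionic-quadratic algebra $B=D(E)$: it says that, viewing $\xi(W_{1})$ inside $X\subset B$, the subspace $\xi(W_{1})$ is stable under left multiplication by an embedded copy of $K$. This is precisely how the lemma is invoked in the proof of Proposition~\ref{prop_L_emblong_B}, where one writes $W_{1}\otimes E=L$ and $Jj=Lw$ using the ring structure of $B$. Your $K$-module structure on $W_{1}=F^{2}$ via $\mathcal{K}\subset\M_{2}(F)$, transported through the isometry $\xi$, is an a priori unrelated $K$-module structure on the abstract two-dimensional $F$-space $\xi(W_{1})$; it gives no information about whether $K\cdot\xi(W_{1})\subseteq\xi(W_{1})$ inside $B$, nor about which copy of $K$ in $B$ one would need to use. (Note also that the parenthetical justification — that $T\isom\Kx$ embeds in $\GSO(X)$ via \eqref{eq_GSO(X)_isom} and ``hence factors through $\Dx$'' — is not correct as stated: $\rho$ pulls $\GSO(X)$ back to $\Fx\times\Bx$, not $\Fx\times\Dx$, and $\rho(s,a)$ acts by the twisted conjugation $x\mapsto s^{-1}ax\,\iota(a)\inv$, not by left multiplication.)

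The dimension count you perform actually only yields the throwaway ``in particular'' clause: any two two-dimensional $F$-vector spaces are isomorphic, so $W_{1}\isom K$ as $F$-spaces is automatic and carries no content. What the paper's proof does instead is entirely explicit inside $B$: writing $\xi_{i}=\xi(e_{i})$, one uses the Gram-matrix condition $M_{\xi}=S$ to show that $\xi_{2}\xi_{1}\inv$ satisfies $X^{2}-bX+ac=0$ in $B$, hence $\xi_{2}\xi_{1}\inv=\tfrac12(b-\sqrt{d})$ generates a copy of $K=F(\sqrt{d}\,)$ inside $B$, and multiplying through by $\xi_{1}$ gives $\xi_{2}=\tfrac{1}{2a}(b-\sqrt{d})\xi_{1}\in K\xi_{1}$. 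This simultaneously exhibits the relevant embedding $K\emb B$ \emph{and} proves the stability of $\xi(W_{1})$ under it, which is exactly the step missing from your outline. To repair your proposal you would need to carry out essentially this computation, at which point the torus/module-theoretic framing becomes superfluous.
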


\begin{proof}
Recall $M_{\xi}=S$ and fix
\begin{equation*}
S=\begin{pmatrix}
a&b/2\\
b/2&c
\end{pmatrix}\in\Msymt(F)
\end{equation*}
so that $d=-4\det S=b^{2}-4ac$. Fix a basis $\lbrace e_{1},e_{2}\rbrace$ of $W_{1}$ and let $\xi_{i}=\xi(e_{i})$ for $i=1,2$. We show that any two vectors in $W_{1}$ are linearly dependant over $K$. Note that the polynomial $p(X)=X^{2} - bX + ac$ has the root $\xi_{2}\xi_{1}\inv=\frac{1}{2}(b-\sqrt{\delta})$. Multiplying each side by $\xi_{1}$, and noting $a=\N_{X}(\xi_{1})$ by assumption, we see that
\[\xi_{2}=\frac{1}{2a}(b-\sqrt{\delta})\xi_{1}\in K\xi_{1}\, .\]
Since $\xi$ is injective, $\xi_{1}$ and $\xi_{2}$ constitute a basis for $W_{1}\subset X$ over $F$. Hence the $K$-span of any vector $w\in W_{1}$ is equal to $W_{1}$ as $F$-vector spaces.
\end{proof}

We proceed by continuing to exploit the base point $\xi$. The group $\SO(X)$ acts transitively on $X_{S}^{2}(F)$ in which the stabiliser of $\xi$ is $\SO(W_{1}^{\perp})$ by construction. Then after some calculation the isomorphism
\begin{equation*}
X^{2}_{S}(F)\isom\SO(W_{1}^{\perp})\bk \SO(X)
\end{equation*}
permits the following reformulation of \eqref{eq_period2}:

\begin{equation*}
\Pe(\theta^{0}(f,\phi),\chi)=\displaystyle\int_{\SO(W_{1}^{\perp})(\A)\bk\SO(X)(\A)} \phi(h_{0}^{-1}\xi)\, \Lambda_{\xi}(R(h_{0})f,\chi)\,dh_{0}\,,
\end{equation*}
by defining

\begin{equation*}
\Lambda_{\xi}(f,\chi) = \displaystyle\int_{\Ax\G(\SO(W_{1}^{\perp})\times\SO(W_{1}))\bk\G(\SO(W_{1}^{\perp})\times\SO(W_{1}))(\A)} \chi(g)\, f (y_{g}h_{g})\,dy_{g}\, dg
\end{equation*}
where $h_{g}\in H^{0}(\A)$ is any element such that
\begin{equation*}
\lambda(h_{g})=\lambda(g)
\end{equation*}
with the additional constraints that $h_{g}(\xi(v))=\xi(g(v))$ for $v\in W_{1}$ and $h_{g}(w)=w$ when $w\in W_{1}^{\perp}$. The variable of integration $(y_{g},g)$ is an element of $\G(\SO(W_{1}^{\perp})\times\SO(W_{1}))(\A)$ whence $\lambda(y_{g})=\lambda(g)$.

\subsection{Exploiting exceptional isomorphisms}\label{sec_global2}

In this section we analyse the domain of $\Lambda_{\xi}(f,\chi)$ and apply the representation theory of $H$ to rewrite this integral as a period of automorphic forms on $\Bx(\A)$. By the decomposition $X=W_{1}\oplus W_{1}^{\perp}$, we look to reinterpret the subgroup $\G(\SO(W_{1}^{\perp})\times\SO(W_{1}))\leqslant\GSO(X)$ (featured in $\Lambda_{\xi}(f,\chi)$) as a subgroup of $\Fx\times\Bx\,/\Delta\Ex$ via the isomorphism $\rho$ of \eqref{eq_GSO(X)_isom}.

\subsubsection{Structural decomposition of quadratic spaces}

Since $K\emb D$ the standard involution $\inv$ on $D$ restricts to the non-trivial Galois automorphism of $K$. We may write
\begin{equation*}
D=K\oplus Kj
\end{equation*}
for any $j\in K^{\perp}$ since for such a $j$ we have $K^{\perp}=Kj$. Extending this decomposition to $B\isom D\otimes E$ (where $\inv$ extends to a Galois action on $B$, trivial on $E$, as in \S\ref{sec_fourdim}) define
\begin{equation*}
L=K(E)\isom K\otimes_{F} E\,.
\end{equation*}
Then $L=E(\sqrt{d}\,)$ is a quadratic extension of $E$ such that we have an embedding $L\emb B$. The standard involution on $B$ (given by $x\mapsto x\inv$) restricts to the non-trivial Galois involution on $L$. Then, for the same $j\in K^{\perp}$ as before, we have $B=L\oplus L\,j$.

Focusing now on the subspace $X\subset B$ define
\begin{equation*}
X_{L}=\left\lbrace\,x\in L \,:\, \iota(x)=x\inv\,\right\rbrace.
\end{equation*}
Both $X_{L}\subset X$ and $1\in X_{L}$. Moreover, we may realise $X_{L}$ as a quadratic extension of $F$. Under the quadratic form $\N_{X}$ we have the orthogonal decomposition $X=X_{L}\oplus X_{L}^{\perp}$ which is described by the following lemma.

\begin{lem}\label{lem_K_subspace_X}
For any $z_{0}\in E$ with $\Tr_{E/F}(z_{0})=0$ we have the orthogonal decomposition
\begin{equation*}
X=X_{L}\oplus z_{0}\,Kj\,.
\end{equation*}
\end{lem}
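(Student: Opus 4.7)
The plan is to exhibit both $X_L$ and $z_0 Kj$ as explicit two-dimensional $F$-subspaces of $B = L \oplus Lj$ whose sum is direct, orthogonal and fills out $X$.

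First I would unpack $X_L$ in coordinates. Fix the $F$-basis $\{1, \sqrt{d}, \sqrt{e}, \sqrt{d}\sqrt{e}\}$ of $L$. On $L$, the Galois action $\iota = \kappa$ fixes $\sqrt{d} \in K \subset D$ and negates $\sqrt{e} \in E$, whereas the canonical involution $^*$ of $B$ is trivial on the centre $E$ and, restricted to $K \subset D$, is the Galois conjugation of $K/F$, so fixes $\sqrt{e}$ and negates $\sqrt{d}$. Matching signs in the defining relation $\iota(\ell) = \ell^*$ yields
\[
X_L = F \oplus F\sqrt{d}\sqrt{e},
\]
which is a two-dimensional quadratic $F$-algebra inside $L$. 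The hypothesis $\Tr_{E/F}(z_0) = 0$ says precisely $\kappa(z_0) = -z_0$, so $z_0 = c\sqrt{e}$ for some $c \in F^\times$; hence $z_0 Kj$ is a two-dimensional $F$-subspace of $Lj \subset B$.

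Second, I would check that $z_0 Kj \subseteq X$. For $y = z_0 kj$ with $k \in K$, centrality of $z_0 \in E$ and the standard commutation $mj = jm^*$ (for $m \in L$, with $^*$ the $E$-Galois conjugation) give $y^* = j^* k^* z_0 = -z_0 k j = -y$, while $\iota(y) = \kappa(z_0) k j = -y$ because $\kappa$ is trivial on $D$. So $\iota(y) = y^*$, confirming $y \in X$ (and also that $y$ lies in the $-1$-eigenspace of both involutions).

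Third, since $X_L \subset L$ and $z_0 Kj \subset Lj$ while $B = L \oplus Lj$, the intersection $X_L \cap z_0 Kj$ is trivial. Dimensions then force $X = X_L \oplus z_0 Kj$ as $F$-vector spaces. For orthogonality with respect to $\N_X$, I would use the polarisation $\langle x, y \rangle = \tfrac{1}{2}(xy^* + yx^*)$; pushing $j$ past $L$ via $mj = jm^*$, the expression $xy^* + yx^*$ reduces to $j(\ell^* x^* - x^* \ell^*)$ for some $\ell^* \in L$, which vanishes by commutativity of $L$.

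The main obstacle is purely bookkeeping: three involutions interact on $B$ (the Galois action $\kappa$ of $E/F$ restricted to $B$, the canonical involution $^*$ of $B$, and the Galois conjugations on $L/E$ and $L/K$) together with the noncommutativity $mj = jm^*$. Once these are tracked consistently, both containment in $X$ and the orthogonality computation are short direct manipulations, and the dimension count finishes the decomposition.
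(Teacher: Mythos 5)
Your proof is correct and rests on the same ingredients as the paper's: the splitting $B = L \oplus Lj$, the interaction between the Galois action $\iota=\kappa$ and the canonical involution $^{\ast}$ (via $j^{\ast}=-j$ and $mj=jm^{\ast}$ for $m\in L$), and a two-plus-two dimension count against $\dim_{F}X=4$. The paper merely organizes it differently — first identifying $X_{L}^{\perp}=X\cap Lj$ abstractly and then recognising $X\cap Lj$ as $\{xj : x\in L,\ x+\iota(x)=0\}=z_{0}Kj$ — whereas you verify membership $z_{0}Kj\subseteq X$, directness, and orthogonality one at a time; these are equivalent framings of the same argument.
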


\begin{proof}
The orthogonal complement $X_{L}^{\perp}$ is given by $X\cap L\,j$ (otherwise $X_{L}\cap X_{L}^{\perp}\neq 0$). Hence $X_{L}^{\perp}$ contains elements $xj$ where $x\in L$ such that $\iota(xj)=(xj)\inv$; these are the elements $x\in L$ such that $x+\iota(x)=0$ since $j$ and $x$ are orthogonal under $\N_{X}$. Fix some $z_{0}\in E$ with $\Tr_{E/F}(z_{0})=0$ then for any $k\in K$ we have $\iota(z_{0}k)=-z_{0}k$. Hence $z_{0}Kj\subseteq X_{L}^{\perp}$, and since both are two-dimensional $F$-vector spaces we have equality.
\end{proof}

Lemma \ref{lem_W1_isom_K} gave us an interpretation of $W_{1}\subset X$ as the space $W_{1}\isom K$. Combining this with Lemma \ref{lem_K_subspace_X} allows one to deduce the following ($F$-vector space) isomorphisms:

\begin{equation*}
W_{1}\isom X_{L}^{\perp}\quad {\rm and}\quad W_{1}^{\perp}\isom X_{L}\,.
\end{equation*}
Consequently we have the reinterpretation of the orthogonal groups

\begin{equation*}
\GSO(X_{L}^{\perp})\isom\GSO(W_{1})\isom \Kx\quad {\rm and}\quad  \GSO(X_{L})\isom\GSO(W_{1}^{\perp})\,,
\end{equation*}
justifying our conclusion that

\begin{equation}\label{eq_double_gp_isom}
\G(\SO(X_{L})\times\SO(X_{L}^{\perp}))\isom \G(\SO(W_{1}^{\perp})\times\SO(W_{1}))\,.
\end{equation}

\begin{prop}\label{prop_isom_Phi}
There is an $F$-isomorphism of algebraic groups

\begin{equation*}
\Phi\colon \Fx\times \Lx\,/\Delta\Ex\arrup{\longrightarrow}{\sim}\G(\SO(W_{1}^{\perp})\times\SO(W_{1}))
\end{equation*}
where the projection onto the second component is given by $(s,k)\longmapsto s^{-1}kk^{\iota}\in \Kx$ (whereby $\Kx$ acts on $W_{1}\isom K$ by left multiplication).
\end{prop}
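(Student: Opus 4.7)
The plan is to realise $\Phi$ as the restriction of the isomorphism $\rho$ of \eqref{eq_GSO(X)_isom} to the subgroup of $\Fx \times \Bx/\Delta\Ex$ cut out by the subalgebra inclusion $L \subset B$. By Assumption \ref{assump_on_embedding} together with Proposition \ref{prop_L_emblong_B}, $L$ embeds into $B$ as an $E$-subalgebra, yielding $\Fx \times \Lx \emb \Fx \times \Bx$. Since $\Ex \subset \Lx$, the subgroup $\Delta\Ex$ already lies in $\Fx \times \Lx$, so passing to quotients gives an injective morphism
\begin{equation*}
\Phi_{0}\colon \Fx \times \Lx / \Delta\Ex \emb \GSO(X),\qquad (s,k) \longmapsto \rho(s,k).
\end{equation*}

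Next I would verify that $\Phi_{0}$ takes values in the subgroup $\G(\SO(X_L) \times \SO(X_L^{\perp}))$ preserving the orthogonal decomposition $X = X_L \oplus X_L^{\perp}$ of Lemma \ref{lem_K_subspace_X}. For $x \in X_L \subset L$, the formula $\rho(s,k)x = s^{-1} k x \iota(k)\inv$ returns an element of $L$ by commutativity, and a direct check (using that the two involutions $\iota$ and $\inv$ on $B$ commute and agree on $X_L$) places the result back in $X_L$. For an element $w = z_0 k' j \in X_L^{\perp} = z_0 K j$, writing $k = k_1 + k_2 z_0$ in the decomposition $L = K \oplus K z_0$, using centrality of $z_0 \in E$ together with the anticommutation relation $j\alpha = \alpha\inv j$ for $\alpha \in L$ (inherited from $D = K \oplus Kj$), the expression reduces to
\begin{equation*}
\rho(s,k)\, w \;=\; s^{-1}\, z_0 \, k\, \iota(k) \, k' \, j \;\in\; z_0 K j,
\end{equation*}
since $k\iota(k) = \N_{L/K}(k) \in K$. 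This simultaneously proves that $\rho(s,k)$ preserves $X_L^{\perp}$ and, upon transporting through the identifications $X_L^{\perp} \isom W_1 \isom K$ of Lemma \ref{lem_W1_isom_K} (on which $\Kx$ acts by left multiplication), identifies the induced action on $W_1$ as multiplication by $s^{-1} k k^{\iota}$, giving the required projection formula.

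Composing $\Phi_{0}$ with the identifications of \eqref{eq_double_gp_isom} then produces the desired map $\Phi$ landing in $\G(\SO(W_{1}^{\perp}) \times \SO(W_{1}))$. Surjectivity follows by dimension count: both source $\Fx \times \Lx / \Delta\Ex$ and target $\G(\SO(W_{1}^{\perp}) \times \SO(W_{1}))$ are three-dimensional connected algebraic groups over $F$, so the injective morphism $\Phi$ must be an isomorphism. The main obstacle is the bookkeeping needed to verify preservation of $X_L^{\perp}$, which requires carefully combining the decomposition $L = K \oplus K z_0$ with the quaternionic structure $D = K \oplus Kj$ and tracking how the two involutions $\iota$ and $\inv$ interact across the tensor factors of $B = D \otimes_F E$; once this is in hand, the explicit projection formula emerges as a byproduct of the computation.
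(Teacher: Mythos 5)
Your proof is correct and follows essentially the same strategy as the paper: restrict the isomorphism $\rho$ of \eqref{eq_GSO(X)_isom} to $\Fx\times\Lx/\Delta\Ex$, verify that the restriction preserves the orthogonal decomposition $X = X_{L}\oplus X_{L}^{\perp}$ (hence lands in $\G(\SO(X_{L})\times\SO(X_{L}^{\perp}))\isom\G(\SO(W_{1}^{\perp})\times\SO(W_{1}))$ via \eqref{eq_double_gp_isom}), read off the projection formula from the explicit action on $X_{L}^{\perp}$, and deduce injectivity from $\ker\rho=\Delta\Ex$. The one genuine point of divergence is the surjectivity step: the paper argues via the commutativity of the diagram \eqref{dia_subgp} together with the bijectivity of $\rho$, an argument that, as written, quietly assumes that any element of $\GSO(X)$ preserving the decomposition already comes from $\Fx\times\Lx$; you instead argue by a dimension count, observing that source and target are both three-dimensional connected $F$-groups, so an injective morphism between them must be an isomorphism. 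Your route is cleaner and the hypothesis it needs (connectedness of the target) is immediate, since $\G(\SO(W_{1}^{\perp})\times\SO(W_{1}))$ is a torus. One small cosmetic remark: the decomposition $k=k_{1}+k_{2}z_{0}$ of $L=K\oplus Kz_{0}$ that you invoke is not actually needed, since the identity $\rho(s,k)(z_{0}k'j)=s^{-1}z_{0}\,k\iota(k)\,k'j$ already follows directly from centrality of $z_{0}\in E$, the anticommutation $j\alpha=\alpha\inv j$ for $\alpha\in L$, and commutativity of $L$, together with $k\iota(k)=\N_{L/K}(k)\in K$.
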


\begin{proof}
By the isomorphism \eqref{eq_double_gp_isom}, it suffices to find an isomorphism $\Phi$ such that the following diagram commutes.

\begin{equation}\label{dia_subgp}
\begin{array}{ccc}\vspace{0.08in}
\displaystyle \Fx\times\Bx\,/\Delta\Ex &\displaystyle\arrup{\longrightarrow}{\rho} &\GSO(X)=\GSO(X_{L}\oplus X_{L}^{\perp})\\\vspace{0.08in}
\cup&&\cup\\\vspace{0.08in}
\displaystyle \Fx\times\Lx\,/\Delta\Ex &\displaystyle\arrup{\longrightarrow}{\Phi} &G(\SO(X_{L})\times\SO(X_{L}^{\perp}))
\end{array}
\end{equation}
We consider the surjective map

\begin{equation*}
\begin{array}{ccc}
\Phi\colon \Fx \times \Lx &\displaystyle\longrightarrow& \left\lbrace\, (s^{-1}k\,\iota(k)\inv\,,\,s^{-1}k\,\iota(k)\,)\,:\, s\in\Fx, k\in \Lx\,\right\rbrace\,.
\end{array}
\end{equation*}
One can check that the projections of $\im(\Phi)$, onto the first and second components, act on $X_{L}$ and $X_{L}^{\perp}$, respectively, by left multiplication. Noting that the similitude factors of each component in the image are equal, hence we may extend $\Phi$ to a mapping into $\G(\SO(X_{L})\times\SO(X_{L}^{\perp}))$. Since the kernel of $\Phi$ is $\Delta\Ex$ we have an injection

\begin{equation*}
\begin{array}{ccc}
\Phi\colon \Fx \times \Lx\,/\Delta\Ex &\displaystyle\emblong& \G(\SO(X_{L})\times\SO(X_{L}^{\perp}))\,.
\end{array}
\end{equation*}
To demonstrate the surjectivity of $\Phi$ we need only check that diagram \eqref{dia_subgp} commutes. Observe that, for $(s,k)\in \Fx \times \Lx$,

\begin{equation*}
\begin{array}{rcl}\vspace{0.05in}
\rho(s,k)\bigl(X_{L}\oplus X_{L}^{\perp}\bigr) &=& s^{-1}k\,\bigl(X_{L}\oplus X_{L}^{\perp}\bigr)\,\iota(k)\inv\\\vspace{0.05in}
&=&s^{-1}k\,\iota(k)\inv\, X_{L}\oplus s^{-1}k\,\iota(k)\, X_{L}^{\perp}\\
&=&\Phi(s,k) \bigl(X_{L}\oplus X_{L}^{\perp}\bigr)\,.\end{array}
\end{equation*}
Thus meaning that, up to an automorphism of $X_{L}\oplus X_{L}^{\perp}$, $\rho \vert_{\Fx\times\Lx}= \Phi$. Since $\rho$ is one-to-one then $\Phi$ must also be surjective.
\end{proof}

\subsubsection{Interpretation of the integral $\Lambda_{\xi}(f,\chi)$}\label{sec_interpret_lambda}

Considering the domain of $\Lambda_{\xi}(f,\chi)$, one uses Proposition \ref{prop_isom_Phi} to deduce the isomorphism
\begin{equation*}
\Ax\G(\SO(W_{1}^{\perp})\times\SO(W_{1}))\,\bk\, \G(\SO(W_{1}^{\perp})\times\SO(W_{1}))(\A)\isom\Ax_{E}\,\Lx\bk\Ax_{L}\,.
\end{equation*}
The application of this isomorphism to $\Lambda_{\xi}(f,\chi)$ requires a change of integration variable. This is accomplished by substituting
\begin{equation*}
(y_{g},g)\longmapsto \rho(1,k)
\end{equation*}
where $k\in \Ax_{E}\,\Lx\bk\Ax_{L}$. For this we note that the original variables $h_{g}\in \GSO(W_{1})(\A)$ and $(y_{g},g)\in\G(\SO(W_{1}^{\perp})\times\SO(W_{1}))(\A)$ satisfy:
\begin{itemize}\vspace{0.05in}
\item $h_{g}\in GSO(W_{1})(\A)$ fixes $W_{1}^{\perp}(\A)$ and acts as $g$ on $W_{1}(\A)$,\vspace{0.05in}
\item $y_{g}\in GSO(W_{1}^{\perp})(\A)$ fixes $W_{1}(\A)$ and acts as $y_{g}$ on $W_{1}^{\perp}(\A)$,\vspace{0.05in}
\item $\lambda(h_{g})=\lambda(y_{g})$.\vspace{0.05in}
\end{itemize}
Hence the product $y_{g}h_{g}$, corresponding to $(y_{g},g)$, is substituted with $\rho(1,k)$ and element $g\in\GSO(W_{1})(\A)$, the projection of $(y_{g},g)$ onto its second factor, is substituted with $kk^{\iota}$ (as in Proposition \ref{prop_isom_Phi}). This substitution returns

\begin{equation*}
\Lambda_{\xi}(f,\chi) = \displaystyle\int_{\Ax_{E}\,\Lx\bk\Ax_{L}} \chi(kk^{\iota})\, f(\rho(1,k))\,dk\,.
\end{equation*}
For any $k\in\Ax_{L}$ we have $kk^{\iota}\in\Ax_{K}$ so we have a character $\Omega\colon \Lx\bk\Ax_{L}\rightarrow\Cx$ by defining
\begin{equation}\label{eq_kappa_def}
\Omega(k)=\chi(kk^{\iota})
\end{equation}
such that upon restricting $\Omega$ to $\Ax_{E}$ we have $\Omega\vert_{\Ax_{E}}=\chi\circ\N_{E/F}$. Since we have chosen $f\in\V_{\sigma,S}^{1}$ to correspond to some $\eta\in\V_{\tau}$ such that $f\vert_{H^{0}(\A)}(\rho(s,a))=\eta(a)$, the integral above becomes
\begin{equation}\label{eq_lambda_Q}
\Lambda_{\xi}(f,\chi) =\displaystyle\int_{\Ax_{E}\,\Lx\bk\Ax_{L}} \Omega(k)\, \eta(k)\,dk\,.
\end{equation}

\section{Local Calculation: Integrals Over Matrix Coefficients}\label{sec_local}

We will ultimately show that $\abs{\Pe(\theta(f,\phi),\chi)}^{2}$ factorises into a product of special $L$-values and a finite number of local integrals. In this section we follow \cite{liu} in defining these local integrals and make use of the excellent results proved by Liu to rearrange them for our purposes. Throughout this section we work locally at a place $v$ of $F$ suppressing the subscript $v$ form the notation (so that $F=F_{v}$, $\sigma$ denotes one local component in the tensor product $\otimes_{v}\sigma_{v}$ and so on).

\subsection{Local integrals}\label{sec_stable}

To provide a complete picture, we define the local integrals in full generality for any (local) irreducible, admissible representation $\pi$ of $G$. The definition is divided into a non-archimedean and an archimedean case; this is due to the nature of the analysis in \cite[\S 3]{liu} in `regularising' these integrals. Immediately after this definition we specialise to choosing $\pi=\theta(\sigma)$, the (local) theta lift of $\sigma$, and unify the integrals from each case since they have the same form in this specialisation. We point out that such a $\pi=\theta(\sigma)$ is always tempered and thus the regularisation results of \cite{liu} apply.

\subsubsection{The non-archimedean case} Suppose that $F$ is a non-archimedean local field. We consider the notion of a stable integral as defined in \cite{lapid_mao}. We refer the reader to there for more information since it is not of central importance to our discussion.

\begin{defn}[The non-archimedean local factors]\label{def_finite_local_factors}
Given $\varphi\in\V_{\pi}$, $\tilde{\varphi}\in\V_{\bar{\pi}}$ and a unitary paring
\begin{equation*}
\B_{\pi}\colon \V_{\pi}\otimes\V_{\bar{\pi}}\rightarrow\C
\end{equation*}
we define
\begin{equation*}
\alpha(\varphi,\tilde{\varphi};\chi)=\int_{\Fx\bk T} \,\int_{U}^{{\rm st}}\B_{\pi}( \pi(ug)\varphi,\tilde{\varphi})\,\chi(g)\,\psi^{-1}_{S}(u)\, du\,dg
\end{equation*}
where the integral over $U$ is called a \textit{stable integral} (see \cite[Definition 2.1]{lapid_mao}) and is evaluated on a certain compact open subgroup $N\subset U$. This $N$ is chosen to be `maximally' in the sense that if $N'$ is another compact open subgroup with $N\subset N' \subset U$ then the integral over $N'$ equals the integral over $N$. The product of Haar measures $du\,dg$ is again a Haar measure on the Bessel subgroup $\Fx\bk R$.
\end{defn}

Indeed it is not obvious that the integrals of Definition \ref{def_finite_local_factors} converge, nor should such an $N$ exist, but Liu proves these facts in \cite[Theorem 2.1]{liu} and \cite[Lemma 3.2]{liu}, respectively.

\subsubsection{The archimedean case} Let $F$ be an archimedean local field. The method of regularisation here is to consider the Fourier transform of certain matrix coefficients in a so-called \textit{regular} subset of $U$.

Recall that the abelian unipotent group $U\isom\Msymt(F)$ is self-dual and all its characters are given by $\psi_{M}$, for some $M\in \Msymt(F)$, as in \ref{eq_uni_char_def}. We denote by $\Msymt(F)^{\rm reg}$ the open and dense subset of non-singular symmetric matrices in $\Msymt(F)$ and define its image in $U$ as
\begin{equation*}
U^{\rm reg}\isom \Msymt(F)^{\rm reg}\,.
\end{equation*}

\begin{defn}[The archimedean local factors]\label{def_infinite_local_factors}
Given $\varphi\in\V_{\pi}$, $\tilde{\varphi}\in\V_{\bar{\pi}}$ and a unitary paring
\begin{equation*}
\B_{\pi}\colon \V_{\pi}\otimes\V_{\bar{\pi}}\rightarrow\C
\end{equation*}
we define
\begin{equation*}
\alpha(\varphi,\tilde{\varphi};\chi)=\int_{\Fx\bk T} \,\int_{U^{\rm reg}}\B_{\pi}( \pi(ug)\varphi,\tilde{\varphi})\,\chi(g)\,\psi_{S}^{-1}(u)\, du\,dg\,.
\end{equation*}
Here, for a fixed $g\in T$, the map

\begin{equation*}
\psi_{S}\longmapsto\int_{U^{\rm reg}}\B_{\pi}( \pi(ug)\phi,\tilde{\phi})\,\psi_{S}^{-1}(u)\, du
\end{equation*}
is the Fourier transform (in $U^{\rm reg}$) of the function $u\longmapsto\B_{\pi}(\pi(ug)\theta(f,\phi),\theta(\tilde{f},\tilde{\phi}))$.
\end{defn}

Once again, Liu proves that this integral converges absolutely in \cite[Theorem 2.1]{liu}.

\subsubsection{Normalisation of local integrals}

In his paper \cite{liu}, Liu goes on to show that there exists a specified set of \textit{good} places, which exclude a \textit{finite} number of places of the base number field (including the archimedean ones), for which the local integrals may be computed as follows (see \cite[p.~7]{liu} for details).

\begin{prop}\label{prop_normalisation} If $v$ is a \textit{good} place of the base number field then for the local vectors $\varphi\in\V_{\pi}$, $\tilde{\varphi}\in\V_{\bar{\pi}}$ one has
\begin{equation*}
\alpha(\varphi,\tilde{\varphi};\chi)=\dfrac{\zeta_{F}(2)\,\zeta_{F}(4)\,L(1/2,\pi\boxtimes\chi)}{L(1,\pi,{\rm Ad})\, L(1,\chi_{K/F})}\,.
\end{equation*}
\end{prop}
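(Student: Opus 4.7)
The plan is to exploit that at a good place $v$ all the data is unramified: $\pi$, $\chi$, $\psi$ are unramified; the Haar measures on $G$, $U$, $T$ assign volume one to the hyperspecial maximal compact subgroups; and the vectors $\varphi\in\V_{\pi}$, $\tilde{\varphi}\in\V_{\bar{\pi}}$ are the spherical vectors normalised so that $\B_{\pi}(\varphi,\tilde{\varphi})=1$. Under this setup, the matrix coefficient $g\mapsto\B_{\pi}(\pi(g)\varphi,\tilde{\varphi})$ coincides with the normalised zonal spherical function of $\pi$, and $\alpha(\varphi,\tilde{\varphi};\chi)$ becomes an unramified Bessel integral of the type studied systematically in \cite{liu}. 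Since $\pi=\theta(\sigma)$ is tempered, the regularisation results of \cite{liu} cited in the proposition guarantee absolute convergence of the stable integral.

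First I would evaluate the inner integral over $U$. In the non-archimedean good setting, the stable integral
\begin{equation*}
\int^{\mathrm{st}}_{U}\B_{\pi}(\pi(ug)\varphi,\tilde{\varphi})\,\psi_{S}^{-1}(u)\,du
\end{equation*}
picks out the unramified local Bessel function $B^{\circ}_{\pi}(g)$ associated to the spherical vector. By the work of Sugano, with refinements by Bump--Friedberg--Furusawa, this Bessel function has an explicit closed form in terms of the Satake parameters of $\pi$ and the character induced on $T$ by the Iwasawa decomposition. Next I would compute the outer integral $\int_{\Fx\backslash T}B^{\circ}_{\pi}(g)\,\chi(g)\,dg$. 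Using the structure $T\isom\Res_{K/F}\Kx$ and noting that $\Fx\backslash T$ modulo its maximal compact is a free cyclic group generated by (the image of) a uniformiser of $K$, this integral collapses to a geometric series in the Satake parameters. Summing the series and rewriting it in terms of the identities
\begin{equation*}
L(s,\pi\boxtimes\chi)=\det(1-q^{-s}A_{\pi}\otimes A_{\chi})^{-1}\,,\qquad L(s,\pi,\Ad)=\det(1-q^{-s}\Ad(A_{\pi}))^{-1}
\end{equation*}
at the level of Satake parameters produces the ratio $L(1/2,\pi\boxtimes\chi)/L(1,\pi,\Ad)L(1,\chi_{K/F})$; the zeta factors $\zeta_{F}(2)\zeta_{F}(4)$ emerge from the volume normalisations of the open compact subgroups appearing in the Bessel subgroup and in the outer matrix-coefficient integral.

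The main obstacle will be handling the split and inert cases for $K_{v}/F_{v}$ in parallel, since the torus $T_{v}$ has genuinely different structure (a split torus versus a non-split anisotropic torus), and keeping track of measure constants and unramified $L$-factor conventions in both cases so that the final formula is uniform. A careful case-by-case analysis of Satake parameters, combined with the observation that the adjoint $L$-factor $L(s,\chi,\Ad)=L(s,\chi_{K/F})$ (see \S\ref{sec_l_functions}) accounts for the denominator factor $L(1,\chi_{K/F})$, ultimately reduces the claim to the analogous computation carried out by Liu for $(\SO_{5},\SO_{2})$ in \cite[Theorem 2.2]{liu}. Verifying that our choices of measures and pairings match those implicit in Liu's formula then delivers the stated identity.
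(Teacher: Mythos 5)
The paper provides no proof for this proposition at all: it is quoted directly from Liu \cite[p.~7]{liu}, introduced by the sentence ``Liu goes on to show that there exists a specified set of \textit{good} places \ldots for which the local integrals may be computed as follows (see \cite[p.~7]{liu} for details).'' Your sketch reconstructs the unramified computation one would expect inside Liu's argument --- spherical matrix coefficients, Sugano's explicit Bessel function formula, the geometric series in Satake parameters, and the bookkeeping of $\zeta_{F}(2)\zeta_{F}(4)$ coming from compact-volume normalisations --- and then correctly reduces the claim to \cite[Theorem 2.2]{liu}, so your route and the paper's ultimately coincide, with yours making explicit the intermediate steps that the paper treats as a black-box citation.
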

Hence we normalise the local factors by setting

\begin{equation}\label{eq_nomalise_complicated}
\alpha^{\natural}(\varphi,\tilde{\varphi};\chi)= \dfrac{L(1,\pi,{\rm Ad})\, L(1,\chi_{K/F})}{\zeta_{F}(2)\,\zeta_{F}(4)\, L(1/2,\pi\boxtimes\chi)}\,\alpha(\varphi,\tilde{\varphi};\chi)
\end{equation}
so that $\alpha^{\natural}(\varphi,\tilde{\varphi};\chi)=1$ for almost all $v$.

Given \textit{any} place $v$, if, instead of considering an arbitrary vector $\tilde{\varphi}\in\V_{\bar{\pi}}$, we take the local vector $\tilde{\varphi}=\bar{\varphi}$ -- in the context of being local factors of functions on ad\`ele groups as in \eqref{eq_fix_theta_conj_factors} -- then we define the notation
\begin{equation}\label{eq_alpha_short}
\alpha(\varphi\,,\chi)=\alpha(\varphi,\bar{\varphi};\chi)\quad{\rm and} \quad \alpha^{\natural}(\varphi,\chi)=\alpha^{\natural}(\varphi,\bar{\varphi};\chi)\,.
\end{equation}
As well as absolute convergence, \cite[Theorem 2.1]{liu} states that whenever such a $\pi$ is tempered, we have the positivity result
\begin{equation*}
\alpha(\varphi\,,\chi)\geq 0\,.
\end{equation*}

\begin{rem}
The integrals defining $\alpha(\varphi,\tilde{\varphi};\chi)$ have a unipotent part (over $U$) which is given by either a stable integral (over a compact open $N\subset U$) or a Fourier transform (with respect to $U^{\rm reg}\subset U$) when $v$ is non-archimedean or archimedean, respectively. We consider these integrals for $\pi$ tempered. The choices of regularisation for these integrals are justified by noting that when $\pi$ is \textit{square integrable} we may take the entire space $U$ in each definition. That is, for \textit{any} $v$, when $\pi$ is square integrable we have
\begin{equation*}
\alpha(\varphi,\tilde{\varphi};\chi)=\int_{\Fx\bk T}\int_{U}\B_{\pi}( \pi(ug)\varphi,\tilde{\varphi})\,\chi(g)\,\psi^{-1}_{S}(u)\, du\,dg\,,
\end{equation*}
by Propositions 3.5 and 3.15 of \cite{liu}.
\end{rem}

\subsubsection{A unified result for theta lifts}

Let us specialise now by assuming $\pi=\theta(\sigma)$ is the theta lift of $\sigma$, a local factor of the fixed representation in \S \ref{sec_hypotheses}. We select the pairing $\B_{\pi}$ to be defined as in \eqref{eq_local_theta_pair}; this depends on a choice of $\B_{\sigma}$ which we made in \eqref{eq_choose_sigma_pairing}. Retaining some generality in what follows, we note that by \cite[Proposition 5.5]{gan_ichino} the conjugate representation $\bar{\pi}$ is generated by elements $\theta(\tilde{f},\tilde{\phi})$ for $\tilde{f}\in \V_{\bar{\sigma}}$ and $\tilde{\phi}\in\V_{\bar{\weil}}$.

\begin{prop} In either the non-archimedean or archimedean cases, if $\theta(f,\phi)\in\V_{\pi}$ and $\theta(\tilde{f},\tilde{\phi})\in\V_{\bar{\pi}}$ then the local integrals become
\begin{equation*}
\begin{array}{c}\vspace{0.15in}
\alpha(\theta(f,\phi),\theta(\tilde{f},\tilde{\phi});\chi)=\displaystyle\dfrac{\zeta_{F}(2)\,\zeta_{F}(4)}{L(1,\sigma,\std)}\,\int_{\Fx\bk\GSO(W_{1})}\, \int_{\Or(X)}\,\int_{\SO(W^{\perp}_{1})\bk\SO(X)}\hspace{0.9in}\hfill\\
\hfill \times \,\phi (h_{g}^{-1}h^{-1}h_{1}^{-1}\xi\, g)\,\tilde{\phi}(h_{1}^{-1}\xi)\,\B_{\sigma}( \sigma(h h_{g})f,\tilde{f})\,dh_{1}\,dh\,dg
\end{array}
\end{equation*}
where $h_{g}\in H^{0}(\A)$ is any element such that $\lambda(h_{g})=\lambda(g)$ with the additional constraints that $h_{g}(\xi(v))=\xi(g(v))$ for $v\in W_{1}$ and $h_{g}(w)=w$ when $w\in W_{1}^{\perp}$ (for comparison see \S \ref{sec_unfolding}); the element $\xi\in X^{2}_{S}$ is the base point chosen in \S \ref{sec_unfolding}; $dh$ is the Haar measure for $\Or(X)$ fixed in the definition for $\B_{\theta(\sigma)}$, see \eqref{eq_local_theta_pair}; and finally $dh_{1}$ is the Siegel--Weil measure on $\SO(W^{\perp}_{1})\bk\SO(X)$.
\end{prop}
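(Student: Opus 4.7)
The strategy is to carry out a local analogue of the global unfolding performed in \S \ref{sec_unfolding}, replacing the global Petersson pairing by its local version \eqref{eq_local_theta_pair} and replacing the sum over $X_S^2(F)$ (which came from Fourier inversion on $U(F)\bk U(\A)$) by an integral over the local zero set $X_S^2(F_v)$ (which will come from the $u$-integral regularisation). Concretely, I would begin by substituting the pairing formula \eqref{eq_local_theta_pair} and the $Y$-equivariance of $\theta$: for $g\in T$ and $u\in U$, one has $\theta(\sigma)(ug)\theta(f,\phi) = \theta(\sigma(h_g)f,\weil(ug,h_g)\phi)$, where $h_g\in H^{0}$ is any element with $\lambda(h_g)=\lambda(g)=\det g$. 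Inserting this into the definition of $\alpha$ and applying \eqref{eq_local_theta_pair} yields the prefactor $\zeta_F(2)\zeta_F(4)/L(1,\sigma,\std)$ together with a four-fold integral over $\Fx\bk T$, over $U$, over $H_1=\Or(X)$, and over $X^2$ (after expanding $\B_\weil(\cdot,\tilde\phi)$ as the integral against $\tilde\phi$).

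The next step is to use the explicit Schr\"odinger action \eqref{eq_schro_mod}--\eqref{eq_weil_extn} to compute
\[
\weil(ug,hh_g)\phi(x)=\chi_{V}(\det g)\,\psi_{M_{(hh_g)^{-1}x}}(u)\,\phi((hh_g)^{-1}xg),
\]
after which the dependence on $u$ sits entirely in a character, and the $u$-integration becomes the Fourier transform (resp.\ stable integral) of $\psi_{M_{(hh_g)^{-1}x}-S}$. By Liu's regularisation results \cite[Thm.~2.1, Lem.~3.2]{liu}, which apply uniformly to the tempered representation $\theta(\sigma)$, this Fourier transform collapses to the characteristic distribution of the locus $M_{(hh_g)^{-1}x}=S$. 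Substituting $y=(hh_g)^{-1}x$, tracking the Jacobian $|\lambda(hh_g)|^{\dim V}=|\det g|^{4}$ from the similitude action on $V^{2}$, and noting that $\chi_{V}(\det g)$ is cancelled by the compensating factor appearing when $\B_\sigma$ is rewritten on the translated argument, reduces the inner integral to one of $\phi(yg)\tilde\phi((hh_g)y)$ over $X_S^2(F)$ with respect to the Siegel--Weil measure.

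To match the target shape, I would then use the transitivity of the $\SO(X)$-action on $X_S^2(F)$ with stabiliser $\SO(W_1^\perp)$ at the base point $\xi$ (exactly as in Lemma \ref{lem_W1_isom_K} and \S \ref{sec_unfolding}), writing $y=h_1^{-1}\xi$ with $h_1\in\SO(W_1^\perp)\bk\SO(X)$ carrying the Siegel--Weil measure $dh_1$. A final shift $h_1\mapsto h_1(hh_g)$, combined with the invariance of the Tamagawa measure on $H_1$ under $h\mapsto h(hh_g)^{-1}$ (which accounts for the subtlety that $hh_g\in\GO(X)$ rather than $\Or(X)$) then converts $\tilde\phi((hh_g)h_1^{-1}\xi)$ into $\tilde\phi(h_1^{-1}\xi)$ and $\phi(h_1^{-1}\xi g)$ into $\phi(h_g^{-1}h^{-1}h_1^{-1}\xi g)$, producing the claimed expression.

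\textbf{Main obstacle.} The hard part will be justifying the regularisation step: in the archimedean case we must check that the Fourier transform of the matrix coefficient really does specialise the integrand to $M_x=(\det g)S$ with the clean Siegel--Weil measure (rather than a distribution supported on a larger locus); in the non-archimedean case we must verify that the stable integral of $\psi_{M-S}(u)$ behaves as an orthogonality relation of $U$-characters on the relevant compact open. Both facts ultimately rest on Liu's absolute convergence and stabilisation theorems \cite[Thm.~2.1, Lem.~3.2]{liu}, but they must be combined compatibly with the similitude change of variable $x=(hh_g)y$, which is the one place where the non-cocompactness of the $H^0$-action (as opposed to $H_1^0$) forces a careful accounting of Jacobians and measures.
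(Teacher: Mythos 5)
The paper's proof of this proposition is a single line: ``This follows immediately from \cite[Lemma~4.2]{liu}.'' You have instead attempted a self-contained re-derivation, essentially the local analogue of the global unfolding in \S\ref{sec_unfolding}. The broad plan is sensible and correctly identifies the moving parts (expand the pairing \eqref{eq_local_theta_pair}, invoke $Y$-equivariance of $\theta$, compute the Schr\"odinger action, change variables $y=(hh_g)^{-1}x$, collapse the $u$-integral, parametrise the orbit). But there are genuine gaps that your appeal to \cite[Thm.~2.1, Lem.~3.2]{liu} does not close, and they are not peripheral: they constitute precisely the content that Liu's Lemma~4.2 supplies.

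First, the collapse of the regularised $u$-integral to an integral over the orbit $X_S^2(F_v)$ \emph{equipped with the Siegel--Weil measure} is not a consequence of absolute convergence or stabilisation. Theorem~2.1 and Lemma~3.2 of \cite{liu} give convergence, positivity, and the existence of a stabilising compact open $N$; they do not identify the resulting distribution. The statement that the Fourier transform/stable integral of $\psi_{M_y-S}$ selects the fibre $M_y=S$ with the correct measure normalisation — matching the $dh_1$ in \eqref{eq_local_theta_pair} so that the $\zeta_F(2)\zeta_F(4)/L(1,\sigma,\std)$ prefactor comes out clean — is exactly what \cite[Lemma~4.2]{liu} proves, and you cannot substitute the weaker regularisation results for it. Second, your treatment of the similitude factors is not justified: after the Schr\"odinger action and the change of variable $y=(hh_g)^{-1}x$ you are left with $\chi_V(\det g)$ and a Jacobian $\abs{\det g}^4$, and asserting these are ``cancelled by the compensating factor appearing when $\B_\sigma$ is rewritten on the translated argument'' does not hold up — $\B_\sigma$ carries no such factor, and $\chi_V(\det g_v)$ is genuinely nontrivial at an individual place $v$ (only the global product vanishes by reciprocity, as noted in \S\ref{sec_unfolding}). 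These factors must instead be absorbed into the normalisation of the Siegel--Weil measure as the Gram matrix scales, which again is part of the content of Liu's lemma. Third, the final substitution $h_1\mapsto h_1(hh_g)$ is not a right translation within $\SO(X)$ (nor within the coset space $\SO(W_1^\perp)\bk\SO(X)$), since $hh_g\in\GO(X)$ has $\lambda(hh_g)=\det g\neq 1$; the phrase ``invariance of the Tamagawa measure on $H_1$ under $h\mapsto h(hh_g)^{-1}$'' is self-referential as written and in any case mixes global with local. That shift moves the orbit $X_S^2$ to $X_{(\det g)S}^2$, so a compensating change of Gram matrix and measure is needed, again a detail Liu handles and you do not.

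In short: your blueprint reflects a reasonable understanding of how the formula should arise, but it is not an independent proof — the ``main obstacle'' you flag at the end is not a technical loose end, it is the proposition itself, and what you have done is reduce the statement to the very lemma the paper cites. If you want a self-contained account you would need to reproduce the argument of \cite[Lemma~4.2]{liu} (the local orbit/Siegel--Weil identity for the regularised $u$-integral, uniform in $g$), including the measure and similitude bookkeeping that you currently wave away.
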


\begin{proof}
This follows immediately from \cite[Lemma 4.2]{liu}.
\end{proof}

\begin{rem}
The product of local Siegel--Weil measures is precisely the Tamagawa measure on the ad\`elic points of the group in question (see \cite[Remark 3.18]{liu}).
\end{rem}

\subsection{Explicit local factors for theta lifts}\label{sec_local_calc} We analyse the terms $\alpha^{\natural}(\theta(f,\phi),\theta(\tilde{f},\tilde{\phi});\chi)$ where $\theta(f,\phi)\in\V_{\pi}$ and $\theta(\tilde{f},\tilde{\phi})\in\V_{\bar{\pi}}$ are as before. We point out again that, even though the subscripts are removed, everything is local here. We will determine the quantity

\begin{equation*}
\begin{array}{c}\vspace{0.07in}
\left(\displaystyle\dfrac{\zeta_{F}(2)\,\zeta_{F}(4)}{L(1,\sigma,\std)}\right)^{-1}\,\alpha(\theta(f,\phi),\theta(\tilde{f},\tilde{\phi});\chi)=\hfill\\\vspace{0.05in}
\hspace{0.9in}\displaystyle\int_{\Fx\bk\GSO(W_{1})}\, \int_{\Or(X)}\,\int_{\SO(W^{\perp}_{1})\bk\SO(X)} \,(\weil(h)\phi) (\xi)\,\,(\bar{\weil}(h_{1})\tilde{\phi})(\xi)\hspace{0.8in}\hfill\\
\hfill\times\,\B_{\sigma}( \sigma(h_{1}^{-1}h_{g}h)f,\tilde{f})\,\chi(g)\,dh_{1}\,dh\,dg
\end{array}
\end{equation*}
after making the substitution $h\mapsto h_{1}^{-1}h_{g}hh_{g}^{-1}$ and recalling that $\xi\,g=h_{g}\,\xi$, by definition. We decompose the integral over $\Or(X)$ in terms of its connected component $\SO(X)$ and replace the measure $dh$ with
\begin{equation*}
dh_{2}=2dh\vert_{\SO(X)}
\end{equation*}
so that the volumes
\begin{equation*}
\Vol(\Or(X),dh)=\Vol(\SO(X),dh_{2})\,.
\end{equation*}
Then we find that the right-hand side of the above quantity is equal to

\begin{equation*}
\begin{array}{c}\vspace{0.05in}
\displaystyle \dfrac{1}{2}\sum_{\varepsilon\in\mu_{2}(F)}\, \int_{\Fx\bk\GSO(W_{1})}\,\int_{\SO(X)}\,\int_{\SO(W^{\perp}_{1})\bk\SO(X)}\, (\weil(h_{2}\varepsilon)\phi) (\xi)\,\,(\bar{\weil}(h_{1})\tilde{\phi})(\xi)\hspace{0.8in}\hspace{0.1in}\\
\hfill\times\,\B_{\sigma}( \sigma(h_{1}^{-1}h_{g}h_{2}\varepsilon)f,\tilde{f})\,\chi(g)\,dh_{1}\,dh_{2}\,dg\,.
\end{array}
\end{equation*}
To simplify further, note that
\begin{equation*}
\SO(X)\isom(\SO(W^{\perp}_{1})\bk\SO(X))\,\times\,\SO(W_{1}^{\perp})
\end{equation*}
where we substitute $h_{2}\mapsto (h_{2},y)$, with measure $dh_{2}\mapsto dh_{2}dy$, so that

\begin{equation*}
\begin{array}{c}\vspace{0.07in}
\left(\displaystyle\dfrac{\zeta_{F}(2)\,\zeta_{F}(4)}{L(1,\sigma,\std)}\right)^{-1}\,\alpha(\theta(f,\phi),\theta(\tilde{f},\tilde{\phi});\chi)=\hfill\\\vspace{0.05in}
\hspace{0.3in}\dfrac{1}{2}\displaystyle\sum_{\varepsilon\in\mu_{2}(F)}\,\displaystyle\int_{\Fx\bk\GSO(W_{1})}\, \int_{\SO(W_{1}^{\perp})}\,\int_{(\SO(W^{\perp}_{1})\bk\SO(X))^{2}} \,(\weil(h_{2}\varepsilon)\phi) (\xi)\,\,(\bar{\weil}(h_{1})\tilde{\phi})(\xi)\hspace{0.2in}\\
\hfill\times\,\B_{\sigma}( \sigma(h_{1}^{-1}yh_{g}h_{2}\varepsilon)f,\tilde{f})\,\chi(g)\,dh_{1}\,dh_{2}\,dy\,dg\,,
\end{array}
\end{equation*}
recalling that $y\in\SO(W_{1}^{\perp})$ stabilises $\xi$ and commutes with $h_{g}$. Using that $\sigma$ is unitary under $\B_{\sigma}$ we finally obtain

\begin{equation*}
\begin{array}{c}\vspace{0.14in}
\alpha(\theta(f,\phi),\theta(\tilde{f},\tilde{\phi});\chi)=\dfrac{1}{2}\,\dfrac{\zeta_{F}(2)\,\zeta_{F}(4)}{L(1,\sigma,\std)}\displaystyle\,\sum_{\varepsilon\in\mu_{2}(F)}\,\displaystyle\int_{(\SO(W^{\perp}_{1})\bk\SO(X))^{2}} \,(\weil(h_{2}\varepsilon)\phi) (\xi)\hfill\\
\hspace{2.5in}\times\,(\bar{\weil}(h_{1})\tilde{\phi})(\xi)\,\Gamma_{\xi,v}(\sigma(h_{2}\varepsilon)f,\bar{\sigma}(h_{1})\tilde{f};\chi)\,dh_{1}\,dh_{2}
\end{array}
\end{equation*}
by defining
\begin{equation*}
\Gamma_{\xi,v}(f,\tilde{f};\chi)=\displaystyle\int_{\Fx\bk\GSO(W_{1})}\,\int_{\SO(W_{1}^{\perp})} \,\B_{\sigma}( \sigma(yh_{g})f,\tilde{f})\,\chi(g)\,dy\,dg\,.
\end{equation*}

\section{The Result: Local and Global Assembly}\label{sec_final}
This section concludes with the unification of the global period in \S \ref{sec_global} and the rearranged local integrals in \S \ref{sec_local}. The connection is facilitated by the work of Waldspurger \cite{waldspurger} who, in 1985, gave the pioneering example of refined Gan--Gross--Prasad conjecture: a proof for the pair $(\SO_{3},\SO_{2})$. We apply his formula to our calculation.

\subsection{A theorem of Waldspurger}\label{sec_thm_of_walds} Let $B$ be a (possibly split) quaternion algebra over $E$. Let $L$ be a quadratic extension of a number field $E$ such that there exists an embedding $L\emb B$ and let $\Omega$ be a Hecke character of $\Ax_{L}$. Let $\tau=\otimes_{w}\tau_{w}$ be an irreducible, cuspidal automorphic representation of $\Bx(\A_{E})$, realised in $\V_{\tau}$, such that $\omega_{\tau}\cdot\Omega\vert_{\Ax_{E}}=1$. For $\eta\in\V_{\tau}$ define the global period integral

\begin{equation*}
\Qe(\eta,\Omega)=\int_{\Ax_{E}\, \Lx\,\bk\,\Ax_{L}}\Omega(k)\,\eta(k)\,dk\,.
\end{equation*}
For each place $w$ of $E$ let $\B_{\tau_{w}}$ be a unitary pairing on $\V_{\tau_{w}}\otimes\V_{\bar{\tau}_{w}}$. For each $\eta_{w}\in\V_{\tau_{w}}$ and $\tilde{\eta}_{w}\in\V_{\bar{\tau}_{w}}$ define the local integrals

\begin{equation*}
\beta_{w}(\eta_{w},\tilde{\eta}_{w};\Omega_{w})=\displaystyle\int_{\Ex_{w}\bk \Lx_{w}}\B_{\tau_{w}}(\tau_{w}(k_{w})\eta_{w},\tilde{\eta}_{w})\,\Omega_{w}(k_{w})\,dk_{w}
\end{equation*}
and their natural normalisation,

\begin{equation*}
\beta_{w}^{\natural}(\eta_{w},\tilde{\eta}_{w};\Omega_{w})=\dfrac{L(1,\tau_{w},\Ad)\,L(1,\chi_{L_{w}/E_{w}})}{\zeta_{E_{w}}(2)\,L(1/2,\tau_{L,w}\otimes\Omega_{w})}\,\beta_{w}(\eta_{w},\tilde{\eta}_{w};\Omega_{w})\,.
\end{equation*}
where $\tau_{L,w}$ is the base change lift of $\tau_{w}$ to $\Bx(L_{w})$.

The following theorem was originally given in \cite[\S III.3]{waldspurger} (and then stated in terms of the refined Gan--Gross--Prasad conjecture in \cite[\S 6]{ichino_ikeda}). Fix a choice of Haar measures $dk_{w}$, such that the Tamagawa measure on $(\Ex\bk\Lx) (\A)$ decomposes as $dk=\prod_{w}dk_{w}$, and a choice of local parings $\B_{\tau_{w}}$, such that the Petersson inner product decomposes as $\B_{\tau}=\prod_{w}\B_{\tau_{w}}$.

\begin{theorem}[Waldspurger]\label{thm_walds}
The integrals $\beta_{w}(\eta_{w},\tilde{\eta}_{w};\Omega_{w})$ are absolutely convergent and
\begin{equation*}
\beta_{w}^{\natural}(\eta_{w},\tilde{\eta}_{w};\Omega_{w})=1
\end{equation*}
for almost all places $w$ of $E$. If, in addition, $\tau$ has trivial central character ($\omega_{\tau}=1$) and $\Omega$ is unitary then
\begin{equation*}
\Qe(\eta,\Omega)\Qe(\tilde{\eta},\bar{\Omega})=\dfrac{1}{2}\dfrac{\zeta_{E}(2)\,L(1/2,\tau_{L}\pr\otimes\Omega)}{L(1,\tau,\Ad)\,L(1,\chi_{L/E})}\,\prod_{w}\,\beta_{w}^{\natural}(\eta_{w},\tilde{\eta}_{w};\Omega_{w})
\end{equation*}
where $\tau_{L}$ denotes the base change lift of $\tau$ to $\Bx(\A_{L})$ and $\tau_{L}\pr$ is the Jacquet--Langlands transfer of $\tau_{L}$ to $\GLt(\A_{L})$.
\end{theorem}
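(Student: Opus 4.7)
The plan is to factorise the product of global periods as a regularised Euler product of local integrals, then identify the unramified factors with the claimed ratio of $L$-values.

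First, I would write
\[
\Qe(\eta,\Omega)\,\Qe(\tilde\eta,\bar\Omega)=\iint_{(\Ax_E\Lx\bk\Ax_L)^2}\Omega(k_1)\bar\Omega(k_2)\,\eta(k_1)\,\tilde\eta(k_2)\,dk_1\,dk_2\,,
\]
and change variables $k_2\mapsto k_1 k$ to rewrite this as an integral against a globally defined matrix coefficient of $\tau$. Using cuspidality of $\tau$ and the triviality of its central character, the inner $k_1$-integration is absorbed into the Petersson pairing $\B_\tau(\tau(k)\eta,\tilde\eta)$, leaving a single integral over the compact torus $\Ax_E\Lx\bk\Ax_L$. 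Using $\B_\tau=\prod_w \B_{\tau_w}$ and $dk=\prod_w dk_w$, this global integral formally decomposes as the Euler product $\prod_w \beta_w(\eta_w,\tilde\eta_w;\Omega_w)$.

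Second, I would verify absolute convergence of each $\beta_w$. Temperedness of the local components $\tau_w$ controls the decay of matrix coefficients on the at-most-rank-one torus $\Ex_w\bk\Lx_w$, so the integrals converge by a standard application of Casselman--Milicic-type asymptotics; this is the local content already present in Waldspurger's paper.

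Third, at a place $w$ where all data is unramified, I would expand $\B_{\tau_w}(\tau_w(k)\eta_w^\circ,\tilde\eta_w^\circ)$ by Macdonald's formula for spherical matrix coefficients and compute the resulting integral over $\Ex_w\bk\Lx_w$ as a geometric series in the Satake parameters. The outcome is exactly
\[
\beta_w(\eta_w^\circ,\tilde\eta_w^\circ;\Omega_w)=\dfrac{\zeta_{E_w}(2)\,L(1/2,\tau_{L,w}\otimes\Omega_w)}{L(1,\tau_w,\Ad)\,L(1,\chi_{L_w/E_w})}\,,
\]
so that $\beta_w^\natural=1$ outside a finite set. Reconciling the formal product with its regularised form then globally isolates the stated ratio of $L$-values, while the prefactor $\tfrac{1}{2}$ comes from the Tamagawa volume $\Vol(\Ax_E\Lx\bk\Ax_L)=2$.

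The principal obstacle is the unramified computation of Step 3, in which every normalisation -- self-dual Haar measures relative to $\psi_w$, the precise form of Macdonald's formula for the split and non-split cases of $L_w/E_w$, and the interaction with the spherical vectors $\eta_w^\circ$, $\tilde\eta_w^\circ$ -- must be carefully reconciled so that the $L$-values emerge in exactly the Ichino--Ikeda normalisation. A secondary subtlety lies in Step 1: the collapse of the $k_1$-integration into a full Petersson pairing is not literal, since the torus domain is strictly smaller than $\Bx\bk\Bx(\A_E)$. One circumvents this either by a spectral decomposition on the compact quotient $\Ax_E\Lx\bk\Ax_L$ or, following Waldspurger's original strategy, by invoking the see-saw dual pair $(\Or(V),\SLt)$ with $V$ the three-dimensional trace-zero subspace of $B$ and appealing to Shimizu's computation of the theta lift.
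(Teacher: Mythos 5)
The paper does not prove this theorem; it is cited verbatim from Waldspurger \cite[\S III.3]{waldspurger} together with its reformulation in Ichino--Ikeda \cite[\S 6]{ichino_ikeda}, so there is no in-paper argument to compare against. Your proposal therefore has to stand on its own as a proof sketch.

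The central difficulty with it is exactly the one you flag at the end as a ``secondary subtlety,'' but it is in fact primary. After the change of variables $k_{2}\mapsto k_{1}k$, the inner $k_{1}$-integral runs over the compact torus $\Ax_{E}\Lx\bk\Ax_{L}$, whereas the Petersson pairing $\B_{\tau}(\tau(k)\eta,\tilde{\eta})$ is an integral over $\Ax_{E}\Bx\bk\Bx(\A_{E})$. These are integrals over entirely different quotients of entirely different groups; no manipulation using cuspidality or central character turns the former into the latter, so the formal Euler-product factorisation $\Qe(\eta,\Omega)\Qe(\tilde{\eta},\bar{\Omega})=\prod_{w}\beta_{w}$ simply does not follow from this step. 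This is not a detail one patches afterwards: it is the entire content of Waldspurger's theorem, because the passage from a product of two toric periods to a product of local matrix-coefficient integrals is the global-to-local bridge that requires the theta/see-saw machinery. Your closing remark that one ``circumvents this \ldots by invoking the see-saw dual pair $(\Or(V),\SLt)$ \ldots and appealing to Shimizu's computation of the theta lift'' is exactly the correct identification of the missing idea; that is Waldspurger's actual argument and it does not reduce to the naive factorisation you begin with. Steps 2 and 3 (absolute convergence from temperedness and the unramified Macdonald computation giving the ratio $\zeta_{E_{w}}(2)\,L(1/2,\tau_{L,w}\otimes\Omega_{w})/\bigl(L(1,\tau_{w},\Ad)\,L(1,\chi_{L_{w}/E_{w}})\bigr)$, with the prefactor $1/2=\Vol(\Ax_{E}\Lx\bk\Ax_{L})$) are accurately described and match the structure of the local side of Waldspurger's proof, but without replacing Step 1 by the theta-lift argument you name in passing, the sketch does not close.
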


We remark that the $L$-function $L(1/2,\tau_{L}\pr\otimes\Omega)$ may be interpreted in various ways due to the low-dimensional isomorphisms that occur (see \S \ref{sec_l_functions}).

\subsection{Application of Waldspurger}

Let the arbitrary notation introduced in \S \ref{sec_thm_of_walds} now assume the running meanings that we assigned in \S \ref{sec_hypotheses} (for the representation $\tau=\otimes_{w}\tau_{w}$ and the pairings $\B_{\tau_{w}}$) and \S \ref{sec_global2} (for the algebras $B\isom D\otimes E$, $L\isom K\otimes E$). We draw special attention to the assumption that $f\in\V_{\sigma,S}^{1}$ with $f\vert_{H^{0}(\A)}\circ\rho=\eta$. The set $\Set$ contains those places of $F$ such that $\sigma_{v}\isom\sigma_{v}\otimes\sgn$ and $S$ is the fixed, finite set of places of $F$ outside which $f_{v}=f^{\circ}_{v}$ is $H(\mathcal{O}_{v})$-invariant (see \S \ref{sec_autm_forms_H}). We choose $\eta=\otimes_{w}\eta_{w}$, implying $f=\otimes_{v}f_{v}$ with $f_{v}=\otimes_{w\vert v}\eta_{w}$ as in \S \ref{sec_factorise_reps}. The pairings $\B_{\tau_{w}}$, for $w\vert v$, determine the pairings $\B_{\sigma_{0,v}}$ and $\B_{\sigma_{v}}$ (as in \S \ref{sec_pairings}) which are used to define the local integrals (\S \ref{sec_local}).

\begin{lem}\label{lem_global_factors_id} The global period integral in Waldspurger's formula satisfies

\begin{equation*}
\Lambda_{\xi}(f,\chi) = \Qe(\eta,\Omega)\,.
\end{equation*}
\end{lem}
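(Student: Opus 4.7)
The strategy is direct: this lemma is essentially an identification of two integral expressions that were constructed to match, so the proof reduces to carefully unwinding the relevant definitions and checking that the character, the automorphic form, and the domain of integration all coincide.

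First I would invoke equation \eqref{eq_lambda_Q}, which was obtained at the end of \S\ref{sec_interpret_lambda} via the change of variables induced by the isomorphism $\Phi$ of Proposition \ref{prop_isom_Phi} and the identification of the quotient
\[
\Ax\G(\SO(W_{1}^{\perp})\times\SO(W_{1}))\,\bk\, \G(\SO(W_{1}^{\perp})\times\SO(W_{1}))(\A)\isom\Ax_{E}\,\Lx\bk\Ax_{L}.
\]
This gives
\[
\Lambda_{\xi}(f,\chi) =\int_{\Ax_{E}\,\Lx\bk\Ax_{L}} \Omega(k)\, \eta(k)\,dk,
\]
where $\Omega(k)=\chi(kk^{\iota})$ was defined in \eqref{eq_kappa_def} and $\eta$ is the vector in $\V_{\tau}$ determined by $f\vert_{H^{0}(\A)}(\rho(s,a))=\eta(a)$, as fixed in \S\ref{sec_explicit_vectors} (recall that the central character condition of Assumption \ref{assump_triv_cenchar} forces $\nu=1$, so that $f\vert_{H^{0}(\A)}(\rho(1,k))=\eta(k)$).

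Next I would compare this to the definition of $\Qe(\eta,\Omega)$ in \S\ref{sec_thm_of_walds}:
\[
\Qe(\eta,\Omega)=\int_{\Ax_{E}\, \Lx\,\bk\,\Ax_{L}}\Omega(k)\,\eta(k)\,dk.
\]
To conclude I must check that this makes sense in the setting of Theorem \ref{thm_walds}, i.e.~that $\Omega$ is indeed a Hecke character of $\Ax_{L}$ with $\omega_{\tau}\cdot\Omega\vert_{\Ax_{E}}=1$. The former is automatic since $\chi$ is a Hecke character of $\Ax_{K}$ and $k\mapsto kk^{\iota}\in\Ax_{K}$; the latter follows from $\omega_{\tau}=1$ (by the running hypothesis in \S\ref{sec_hypotheses}) together with the computation noted just after \eqref{eq_kappa_def} that $\Omega\vert_{\Ax_{E}}=\chi\circ\N_{E/F}$, which is trivial by Assumption \ref{assump_on_K_chi} ($\chi\vert_{\Ax_F}=1$).

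The two integral expressions then coincide term by term, establishing the lemma. There is no genuine obstacle here beyond careful bookkeeping; the substantive work was done in \S\ref{sec_unfolding}--\S\ref{sec_global2} in identifying the Bessel domain with $\Ax_{E}\Lx\bk\Ax_{L}$ via the exceptional isomorphism, and in arranging the character $\Omega$ so that it fits directly into Waldspurger's framework.
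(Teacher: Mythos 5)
Your proof is correct and takes essentially the same route as the paper: both cite \eqref{eq_lambda_Q}, observe that $\Omega\vert_{\Ax_{E}}=\chi\circ\N_{E/F}=1$ by Assumption \ref{assump_on_K_chi} together with $\omega_{\tau}=1$, note that $\Omega$ is unitary since $\chi$ is, and conclude that $\Lambda_{\xi}(f,\chi)$ coincides term-for-term with $\Qe(\eta,\Omega)$.
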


\begin{proof}
We only need to remark that $\Omega\vert_{\Ax_{E}}=\chi\circ\N_{E/F}$, implying the condition $\Omega\vert_{\Ax_{E}}=1$ is satisfied since $\chi\vert_{\Ax}=1$ (Assumption \ref{assump_on_K_chi}). Moreover, $\Omega$ is unitary because $\chi$ is assumed so. We then have that the form of $\Lambda_{\xi}(f,\chi)$ in \eqref{eq_lambda_Q} is given precisely by $\Qe(\eta,\Omega)$.
\end{proof}

In a similar manner, we identify the local period integrals in Waldspurger's formula with our own terms $\Gamma_{\xi,v}(f_{v},\tilde{f}_{v};\chi_{v})$. The following lemma is a local analogue of the analysis of $\Lambda_{\xi}(f,\chi)$ in \S \ref{sec_global2}.

\begin{lem}\label{lem_local_factors_id}
Let $v$ be a place of $F$. Then, for $f_{v}\in\V_{\sigma_{v}}$ and $\tilde{f}_{v}\in\V_{\bar{\sigma}_{v}}$ as above,

\begin{equation*}
\Gamma_{\xi,v}(f_{v},\tilde{f}_{v};\chi_{v})=\dfrac{1}{2^{c_{v}}}\,\prod_{w\vert v}\beta_{w}(\eta_{w},\tilde{\eta}_{w};\Omega_{w})
\end{equation*}
where
\begin{equation*}
c_{v}=\left\lbrace\begin{array}{ll}\vspace{0.05in}
1&{\rm if}\,\, v\in\Set\cap S\\
0&{\rm otherwise}\qquad.
\end{array}\right.
\end{equation*}
\end{lem}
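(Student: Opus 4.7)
The plan is to mimic the global unfolding of \S \ref{sec_global2} at a single local place, using the local version of Proposition \ref{prop_isom_Phi} to change variables, and then to track carefully how the choice of local pairing $\B_{\sigma_v}$ in \S \ref{sec_pairings} produces the factor $1/2^{c_v}$.

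First I would note that the local analogue of Proposition \ref{prop_isom_Phi} yields an isomorphism $\Fx_v \times \Lx_v / \Delta \Ex_v \to \G(\SO(W_1^\perp) \times \SO(W_1))_v$. Performing the substitution $(y, g) \leftrightarrow \rho(1, k)$ (so that $y h_g \leftrightarrow \rho(1,k) \in H^0_v$ and $g \leftrightarrow k k^\iota$, exactly as in \S \ref{sec_interpret_lambda}), and recalling $\chi_v(k k^\iota) = \Omega_v(k)$, reduces $\Gamma_{\xi, v}(f_v, \tilde f_v; \chi_v)$ to
\begin{equation*}
\int_{\Ex_v \bk \Lx_v}\B_{\sigma_v}\bigl(\sigma_v(\rho(1,k)) f_v,\, \tilde f_v\bigr)\,\Omega_v(k)\,dk\,.
\end{equation*}
Since $\sigma_{0,v}(\rho(1,k)) = \tau_v(k)$ and $\tau_v = \otimes_{w \mid v} \tau_w$ (using the splitting $\Ex_v \bk \Lx_v \cong \prod_{w\mid v}(\Ex_w \bk \Lx_w)$ from \eqref{eq_rama}), the integrand factors as a product over $w \mid v$ once $\B_{\sigma_v}$ is replaced by the underlying $\B_{\tau_v} = \prod_{w \mid v} \B_{\tau_w}$.

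Next I would perform the case analysis dictated by the definition of $\B_{\sigma_v}$ in \S \ref{sec_pairings}:
\begin{itemize}
\item If $v \notin \Set$, then $\B_{\sigma_v} = \B_{\sigma_{0,v}} = \B_{\tau_v}$, and the integral is immediately $\prod_{w\mid v}\beta_w(\eta_w, \tilde\eta_w;\Omega_w)$, giving $c_v = 0$.
\item If $v \in \Set \cap S$, then $f_v \in \V_{\sigma_{0,v}}$ and $\tilde f_v \in \V_{\bar\sigma_{0,v}}$ (the $\sigma_v(\iota)$-components vanish since $f \in \V^1_{\sigma,S}$), so the explicit formula for $\B_{\sigma_v}$ collapses to $\tfrac12 \B_{\sigma_{0,v}}(\tau_v(k) f_v,\tilde f_v)$, producing the factor $1/2$ and hence $c_v = 1$.
\item If $v \in \Set$ and $v \notin S$, then $f_v = \mathfrak F^\circ_v + \sigma_v(\iota)\mathfrak F^\circ_v$ and similarly for $\tilde f_v$. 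The identity $\sigma_v(\rho(1,k))\sigma_v(\iota) = \sigma_v(\iota)\sigma_{0,v}(\iota\rho(1,k)\iota) = \sigma_v(\iota)\tau_v(k^\iota)$ splits the pairing into two terms, one with $\tau_v(k)$ and one with $\tau_v(k^\iota)$; the factor $1/2$ from $\B_{\sigma_v}$ is exactly cancelled by the change of variable $k \mapsto k^\iota$ in the second term (which preserves both $\Omega_v$, since $\Omega_v(k^\iota) = \chi_v(k^\iota k) = \Omega_v(k)$, and the Haar measure on $\Ex_v \bk \Lx_v$), again producing $c_v = 0$.
\end{itemize}

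The main obstacle is bookkeeping rather than anything conceptual: one must (a) verify that the change of variables $(y, g) \mapsto \rho(1,k)$ is measure-preserving after the identification of Tamagawa measures with their local analogues (this is entirely parallel to the global substitution used to pass from \eqref{eq_period2} to \eqref{eq_lambda_Q}), and (b) handle the third case carefully, since at unramified places in $\Set$ the spherical vector $f^\circ_v$ lives \emph{genuinely} across both copies $\V_{\sigma_{0,v}}$ and $\sigma_v(\iota)\V_{\sigma_{0,v}}$, so naive application of the pairing would produce a spurious factor of $1/2$. The cancellation via $k \mapsto k^\iota$ and $\Omega_v(k^\iota) = \Omega_v(k)$ is precisely what distinguishes this case from $v \in \Set \cap S$ and produces the asymmetric exponent $c_v$ in the final formula.
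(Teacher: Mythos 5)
Your proof is correct and follows the same overall route as the paper's: change variables by Proposition \ref{prop_isom_Phi} to pass to an integral over $(\Ex\bk\Lx)(F_v)$, factorize via \eqref{eq_rama}, then read off the constant $1/2^{c_v}$ from the definition of $\B_{\sigma_v}$ in \S \ref{sec_pairings}. Where you differ is worth highlighting: your treatment of the third case ($v\in\Set$, $v\notin S$) is more careful than the paper's, and it exposes a real (if benign) elision in the paper's argument. The paper asserts the scalar identity $\B_{\sigma_v}(f_v,\tilde f_v)=\tfrac{1}{2^{c_v}}\B_{\sigma_{0,v}}(f_v,\tilde f_v)$ and says this ``is clear'' when $v\notin S$, then simply substitutes into the integrand with $\tau_w(k_w)\eta_w$ in place of $\eta_w$. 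But that substitution is not a pointwise identity when $v\in\Set\setminus S$: for the spherical vector $f^\circ_v=\mathfrak F^\circ_v+\sigma_v(\iota)\mathfrak F^\circ_v$, the matrix coefficient $\B_{\sigma_v}(\sigma_v(\rho(1,k))f^\circ_v,\tilde f^\circ_v)$ equals the \emph{average} $\tfrac12\bigl(\B_{\tau_v}(\tau_v(k)\mathfrak F^\circ_v,\widetilde{\mathfrak F}^\circ_v)+\B_{\tau_v}(\tau_v(k^\iota)\mathfrak F^\circ_v,\widetilde{\mathfrak F}^\circ_v)\bigr)$, and since $v\in\Set$ means $\tau_v\not\isom\tau_v^\iota$, the two summands genuinely differ. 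The desired identity only holds after integration against $\Omega_v\,dk_v$, via the change of variable $k\mapsto k^\iota$ which you spell out (using $\iota$-invariance of the local Haar measure and $\Omega_v(k^\iota)=\chi_v(k^\iota k)=\Omega_v(k)$). This is exactly the missing step, and your Case 3 supplies it correctly. One minor phrasing point: the ``spurious factor of $1/2$'' you mention is not what a direct application of the pairing produces at $k=1$ (there the two halves coincide, giving no net factor), but rather what one would get by naively replacing the average by a single $\tau_v(k)$ term; what your change of variable really establishes is that the two halves integrate to the same thing.
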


\begin{proof}
Analogous to the global setting (discussed in \S \ref{sec_interpret_lambda}) we have

\begin{equation*}
\Fx_{v}\bk\GSO(W_{1})_{v}\times\SO(W_{1}^{\perp})_{v}\isom \Fx_{v}\bk \G(\SO(W^{\perp}_{1})\times\SO(W_{1}))_{v}
\end{equation*}
so that

\begin{equation}\label{eq_apply_local_isom}
\Gamma_{\xi,v}(f_{v},\tilde{f}_{v};\chi_{v})=\displaystyle\int_{\Fx_{v}\bk \G(\SO(W^{\perp}_{1})\times\SO(W_{1}))_{v}}\,\B_{\sigma_{v}}( \sigma_{v}(yh_{g})f_{v},\tilde{f}_{v})\,\chi_{v}(g)\,dy_{g}\,dg
\end{equation}
where $h_{g}\in H^{0}_{v}$ is any element such that
\begin{equation*}
\lambda(h_{g})=\lambda(g)
\end{equation*}
with the additional constraints that $h_{g}(\xi(v))=\xi(g(v))$ for $v\in W_{1,v}$ and $h_{g}(w)=w$ when $w\in W_{1,v}^{\perp}$. The variable of integration $(y_{g},g)$ is an element of $\G(\SO(W_{1}^{\perp})\times\SO(W_{1}))_{v}$ whence $\lambda(y_{g})=\lambda(g)$. By Proposition \ref{prop_isom_Phi} there is an $F_{v}$-isomorphism

\begin{equation*}
\Fx_{v}\bk \G(\SO(W^{\perp}_{1})\times\SO(W_{1}))_{v}\isom (\Ex\bk\Lx)(F_{v})\,.
\end{equation*}
Applying this isomorphism to \eqref{eq_apply_local_isom} (checking \S \ref{sec_interpret_lambda} for comparison), we substitute the element $y_{g}h_{g}$, which corresponds to $(y_{g},g)$ by definition, with $\rho(1,k)$ where $k\in (\Ex\bk\Lx)(F_{v})$. The element $g\in\GSO(W_{1})_{v}$ is the projection of $(y_{g},g)$ onto its second factor; as in Proposition \ref{prop_isom_Phi}, this projection corresponds to $\rho(1,k)\mapsto kk^{\iota}$. This substitution returns

\begin{equation*}
\Gamma_{\xi,v}(f_{v},\tilde{f}_{v};\chi_{v})=\displaystyle\int_{(\Ex\bk\Lx)(F_{v})}\,\B_{\sigma_{v}}(\sigma_{v}(\rho(1,k_{v}))f_{v},\tilde{f}_{v})\,\chi_{v}(k_{v}k_{v}^{\iota})\,dk_{v}\,.
\end{equation*}

The automorphic character $\Omega=\otimes_{w}\Omega_{w}$ of \eqref{eq_kappa_def}, factorised over places of $E$, may be divided into factors corresponding to each place $v$ of $F$ by $\Omega_{v}=\otimes_{w\vert v}\Omega_{w}$. These factors coincide with the factorisation of $\chi=\otimes_{v}\chi_{v}$ in that $\Omega_{v}\colon k_{v} \mapsto\chi_{v}(k_{v}k_{v}^{\iota})$.

The measures $dk_{v}$ are chosen so that the Tamagawa measure $dk$ on $(\Ex\bk\Lx)(\A_{E})$ factorises as
\begin{equation*}
dk=\prod_{w}dk_{w}\,,
\end{equation*}
over places of $E$, with $dk_{v}=\prod_{w\vert v}dk_{w}$. The $dk_{v}$ are precisely the measures $dh_{1,v}$ of $H_{1,v}$ in \eqref{eq_local_theta_pair} (defining $\B_{\theta(\sigma_{v})}$). We now express the domain in terms of places $w$ of $E$. By \eqref{eq_rama} we have

\begin{equation*}
(\Ex\bk\Lx)(F_{v})\isom  \prod_{w\vert v} \Ex_{w}\bk\Lx_{w}\,.
\end{equation*}

Our calculation now depends on whether or not $v\in\Set$. With the vectors $f_{v}=\otimes_{w\vert v}\eta_{w}$ and $\tilde{f}_{v}=\otimes_{w\vert v}\tilde{\eta}_{w}$ we have

\begin{equation*}
\B_{\sigma_{v}}(f_{v},\tilde{f}_{v})=\frac{1}{2^{c_{v}}}\,\B_{\sigma_{0,v}}(f_{v},\tilde{f}_{v})=\frac{1}{2^{c_{v}}}\,\prod_{w\vert v}\B_{\tau_{w}}(\eta_{w},\tilde{\eta}_{w})\,.
\end{equation*}
This is clear from the definition of the pairing $\B_{\sigma_{v}}$ in \S \ref{sec_pairings} if $v\not\in S$ or $v\not\in\Set$. If $v\in S\cap \Set$ then
\begin{equation*}
f_{v}=f_{v}+0\in\V_{\sigma_{0,v}}\oplus\V_{\sigma_{0,v}^{\iota}}
\end{equation*}
so we pick up the factor of $1/2^{c_{v}}=1/2$.

At last we obtain
\begin{equation*}
\begin{array}{rcl}\vspace{0.1in}
\Gamma_{\xi,v}(f_{v},\tilde{f}_{v};\chi_{v})&=&\displaystyle\int_{ \prod_{w\vert v} \Ex_{w}\bk\Lx_{w}}\,\frac{1}{2^{c_{v}}}\,\prod_{w\vert v}\B_{\tau_{w}}(\tau_{w}(k_{w})\eta_{w},\tilde{\eta}_{w})\,\Omega_{w}(k_{w})\,dk_{w}\\
&=&\displaystyle\frac{1}{2^{c_{v}}}\,\prod_{w\vert v}\,\beta_{w}(\eta_{w},\tilde{\eta}_{w};\Omega_{w})\,.
\end{array}
\end{equation*}
\end{proof}

Combining the previous two lemmas allows Waldspurger's formula to be rewritten in terms of the integrals defining $\Lambda_{\xi}$ and $\Gamma_{\xi}$. Recall the notation $S\pr=S\smallsetminus(S \cap\Set)$ and introduce
\begin{equation*}
s=\abs{S\cap\Set}\quad {\rm and}\quad s\pr=\abs{S\pr}\,.
\end{equation*}

\begin{prop}\label{prop_walds_reform}
For all pure tensors $f=\otimes_{v}f_{v}\in\V_{\sigma,S}^{1}$ and $\tilde{f}=\otimes_{v}\tilde{f}_{v}\in\V_{\bar{\sigma},S}^{1}$ we have
\begin{equation*}
\Lambda_{\xi}(f,\chi)\Lambda_{\xi}(\tilde{f},\bar{\chi})=2^{s-1}\,\prod_{v}\, \Gamma_{\xi,v}(f_{v},\tilde{f}_{v};\chi_{v})\,.
\end{equation*}
\end{prop}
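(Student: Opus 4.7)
\medskip

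The plan is to prove Proposition \ref{prop_walds_reform} as an immediate synthesis of Lemma \ref{lem_global_factors_id}, Lemma \ref{lem_local_factors_id} and Waldspurger's Theorem \ref{thm_walds}. Since the heavy lifting has been done in identifying the global integral $\Lambda_{\xi}(f,\chi)$ with the period $\Qe(\eta,\Omega)$ and the local integral $\Gamma_{\xi,v}(f_v,\tilde f_v;\chi_v)$ with a product of local Waldspurger factors, the remaining task is simply to track constants and $L$-values.

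First I would apply Lemma \ref{lem_global_factors_id} to both $(f,\chi)$ and $(\tilde{f},\bar{\chi})$, using that the correspondence $\chi \mapsto \Omega$ defined by $\Omega(k)=\chi(kk^{\iota})$ sends $\bar\chi$ to $\bar\Omega$, to rewrite the left-hand side as
\begin{equation*}
\Lambda_{\xi}(f,\chi)\Lambda_{\xi}(\tilde{f},\bar{\chi})=\Qe(\eta,\Omega)\,\Qe(\tilde{\eta},\bar{\Omega}).
\end{equation*}
The hypotheses required to invoke Theorem \ref{thm_walds} (that $\omega_{\tau}=1$ and $\Omega$ is unitary with $\Omega\vert_{\A_E^{\times}}=1$) are already guaranteed by \S \ref{sec_hypotheses} and Assumption \ref{assump_on_K_chi}, so Waldspurger's formula yields
\begin{equation*}
\Qe(\eta,\Omega)\,\Qe(\tilde{\eta},\bar{\Omega})=\dfrac{1}{2}\,\dfrac{\zeta_{E}(2)\,L(1/2,\tau_{L}'\otimes\Omega)}{L(1,\tau,\Ad)\,L(1,\chi_{L/E})}\,\prod_{w}\beta_{w}^{\natural}(\eta_{w},\tilde{\eta}_{w};\Omega_{w}).
\end{equation*}

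Next I would absorb the ratio of global $L$-values into the product by unwinding the normalisation $\beta_w^\natural$: using the decomposition $L(s,\tau,\Ad)=\prod_w L(s,\tau_w,\Ad)$ and the analogous factorisations of $\zeta_E(2)$, $L(1,\chi_{L/E})$ and $L(1/2,\tau_L'\otimes\Omega)$, the product of the local normalising factors over all $w$ collapses precisely to the inverse of the global ratio appearing in Waldspurger's formula. This gives the compact identity
\begin{equation*}
\Qe(\eta,\Omega)\,\Qe(\tilde{\eta},\bar{\Omega})=\dfrac{1}{2}\prod_{w}\beta_{w}(\eta_{w},\tilde{\eta}_{w};\Omega_{w}).
\end{equation*}

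Finally I would regroup the product over places $w$ of $E$ according to the place $v$ of $F$ lying below, and invoke Lemma \ref{lem_local_factors_id} to replace each inner product $\prod_{w\mid v}\beta_{w}$ by $2^{c_v}\Gamma_{\xi,v}(f_v,\tilde f_v;\chi_v)$. Since $c_v=1$ exactly when $v\in S\cap\Set$ and is zero otherwise, the total power of $2$ produced is $\sum_v c_v=|S\cap\Set|=s$, and combining with the global factor of $1/2$ delivers the stated $2^{s-1}$. There is no real obstacle here beyond bookkeeping; the substantive content was already placed in the preceding lemmas, and the only subtle point to verify en route is that the local and global factorisations of the pairings, measures and $L$-functions are all mutually compatible with the conventions fixed in \S \ref{sec_hypotheses} and \S \ref{sec_pairings}, so that the unnormalised Waldspurger identity above is genuinely valid.
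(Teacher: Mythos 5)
Your proof is correct and is exactly the synthesis the paper intends (the paper states the proposition with no explicit proof, leaving it as a direct consequence of Lemmas \ref{lem_global_factors_id} and \ref{lem_local_factors_id} and Theorem \ref{thm_walds}). The bookkeeping is right: unwinding $\beta_w^{\natural}$ into $\beta_w$ cancels the ratio of global $L$-values in Waldspurger's formula, and regrouping the product over $w\mid v$ via Lemma \ref{lem_local_factors_id} contributes $2^{\sum_v c_v}=2^s$, which combined with Waldspurger's $1/2$ gives $2^{s-1}$.
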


\subsection{The explicit formula}\label{sec_reform_form}

Applying the definition of the variant theta integral \eqref{eq_variant_formula} we begin computing the Bessel period's square:

\begin{equation*}
\begin{array}{l}\vspace{0.15in}
\abs{\Pe(\theta(f,\phi),\chi)}^{2}=\hfill\\
\hspace{0.5in}\displaystyle\int_{\mu_{2}(F)\bk\mu_{2}(\A)}\int_{\mu_{2}(F)\bk\mu_{2}(\A)}\Pe(\theta^{0}(\sigma(\delta) f,\weil(\delta)\phi),\chi)\,\overline{\Pe(\theta^{0}(\sigma(\varepsilon) f,\weil(\varepsilon)\phi),\chi)}\,d\delta\,d\varepsilon\,.
\end{array}
\end{equation*}
As $\mu_{2}(F)$ is of index-two in $\mu_{2}(\A)$ we rearrange so that the above integral equals

\begin{equation}\label{eq_above_summation}
\begin{array}{l}\vspace{0.1in}
\displaystyle\dfrac{1}{4}\int_{\mu_{2}(\A)}\int_{\mu_{2}(\A)}\Pe(\theta^{0}(\sigma(\delta) f,\weil(\delta)\phi),\chi)\,\overline{\Pe(\theta^{0}(\sigma(\varepsilon) f,\weil(\varepsilon)\phi),\chi)}\,d\delta\,d\varepsilon\\
\hspace{0.5in}=\displaystyle\dfrac{1}{4^{1+s+s\pr}}\sum_{\mu_{2}(F_{S})}\sum_{\mu_{2}(F_{S})}\Pe(\theta^{0}(\sigma(\delta) f,\weil(\delta)\phi),\chi)\,\overline{\Pe(\theta^{0}(\sigma(\varepsilon) f,\weil(\varepsilon)\phi),\chi)}\,.
\end{array}
\end{equation}
This equality follows since, as $\varepsilon_{v}\in H(\mathcal{O}_{v})$, the integrals for $v\not\in S$ fix the integrand and elsewhere we have the (normalised) counting Haar measure. We further reduce the sum by noting that, for $h_{0}\in H^{0}(\A)$,
\begin{equation*}
\sigma(\varepsilon)f(h_{0})=f(h_{0}\varepsilon)=0
\end{equation*}
unless $\varepsilon\in\mu_{2}(\A^{S\cap\Set})\lbrace 1,\iota\rbrace$ (by \eqref{eq_gi_lem2.2} or \cite[Lemma 2.2]{gan_ichino}). Hence \eqref{eq_above_summation} equals

\begin{equation*}
\begin{array}{l}
\displaystyle\dfrac{1}{4^{1+s+s\pr}}\sum_{\mu_{2}(F_{S\pr})}\sum_{\mu_{2}(F_{S\pr})}\bigg(\Pe(\theta^{0}(\sigma(\delta) f,\weil(\delta)\phi),\chi)+\Pe(\theta^{0}(\sigma(\delta\iota) f,\weil(\delta\iota)\phi),\chi)\bigg)\hspace{0.5in}\\
\hfill\times\,\bigg(\overline{\Pe(\theta^{0}(\sigma(\varepsilon) f,\weil(\varepsilon)\phi),\chi)}+\overline{\Pe(\theta^{0}(\sigma(\varepsilon\iota) f,\weil(\varepsilon\iota)\phi),\chi)}\bigg)\,.
\end{array}
\end{equation*}
The invariance under $\iota$, noted in \eqref{eq_theta0_inv_iota}, implies we have the equality

\begin{equation*}
\abs{\Pe(\theta(f,\phi),\chi)}^{2}=\displaystyle\dfrac{1}{4^{s+s\pr}}\sum_{\mu_{2}(F_{S\pr})}\sum_{\mu_{2}(F_{S\pr})}\Pe(\theta^{0}(\sigma(\delta) f,\weil(\delta)\phi),\chi)\,\overline{\Pe(\theta^{0}(\sigma(\varepsilon) f,\weil(\varepsilon)\phi),\chi)}\,.
\end{equation*}
Hence it suffices to proceed by considering the summands

\begin{equation*}
\begin{array}{l}\vspace{0.14in}
\Pe(\theta^{0}(\sigma(\delta) f,\weil(\delta)\phi),\chi)\,\overline{\Pe(\theta^{0}(\sigma(\varepsilon) f,\weil(\varepsilon)\phi),\chi)}
\hfill\\\vspace{0.14in}
\hspace{1.2in}=\displaystyle\int\int_{((\SO(W_{1}^{\perp})\bk \SO(X))(\A))^{2}} \,(\weil(h_{2}\delta)\phi)(\xi)\,\,\overline{(\weil(h_{1}\varepsilon)\phi)(\xi)}\hspace{0.9in}\\
\hfill\displaystyle\times\,\Lambda_{\xi}(\sigma(h_{2}\delta)f),\chi)\,\overline{\Lambda_{\xi}(\sigma(h_{1}\varepsilon)f,\chi)}\,dh_{1}\,dh_{2}\,.
\end{array}
\end{equation*}
We have $\overline{\Lambda_{\xi}(\sigma(h_{1}\varepsilon)f,\chi)}=\Lambda_{\xi}(\bar{\sigma}(h_{1}\varepsilon)\bar{f},\bar{\chi})$ where $\bar{f}=\otimes_{v}\bar{f}_{v}\in\V_{\bar{\sigma},S}^{1}$ and the vectors
\begin{equation*}
\sigma(h_{1}\varepsilon)f=\otimes_{v}\sigma_{v}(h_{1,v}\varepsilon_{v})f_{v}\in\V_{\sigma,S}^{1}\quad {\rm and}\quad \bar{\sigma}(h_{1}\varepsilon)\bar{f}=\otimes_{v}\bar{\sigma}_{v}(h_{1,v}\varepsilon_{v})\bar{f}_{v}\in\V_{\bar{\sigma},S}^{1}
\end{equation*}
are pure tensors. Thus the hypotheses of Proposition \ref{prop_walds_reform} are satisfied; we have

\begin{equation*}
\Lambda_{\xi}(\sigma(h_{2}\delta)f),\chi)\,\overline{\Lambda_{\xi}(\sigma(h_{1}\varepsilon)f,\chi)}=2^{s-1}\,\prod_{v}\Gamma_{\xi,v}(\sigma_{v}(h_{2,v}\delta_{v})f_{v},\,\bar{\sigma}_{v}(h_{1,v}\varepsilon_{v})\bar{f}_{v};\chi_{v})\,.
\end{equation*}
Subsequently

\begin{equation*}
\begin{array}{l}\vspace{0.14in}
\Pe(\theta^{0}(\sigma(\delta) f,\weil(\delta)\phi),\chi)\,\overline{\Pe(\theta^{0}(\sigma(\varepsilon) f,\weil(\varepsilon)\phi),\chi)}
\hfill\\\vspace{0.14in}
\hspace{0.7in}=\displaystyle 2^{s-1}\,\prod_{v}\,\int\int_{(\SO(W_{1}^{\perp})_{v}\bk \SO(X)_{v})^{2}} \,(\weil_{v}(h_{2,v}\delta_{v})\phi_{v})(\xi)\,\,(\bar{\weil}_{v}(h_{1,v}\varepsilon_{v})\bar{\phi}_{v})(\xi)\hspace{0.4in}\\
\hfill\displaystyle\times\,\Gamma_{\xi,v}(\sigma_{v}(h_{2,v}\delta_{v})f_{v},\,\bar{\sigma}_{v}(h_{1,v}\varepsilon_{v})\bar{f}_{v};\chi_{v})\,dh_{1,v}\,dh_{2,v}\,.
\end{array}
\end{equation*}
In summary, we have the following formula

\begin{equation}\label{eq_placeholder}
\abs{\Pe(\theta(f,\phi),\chi)}^{2}=\displaystyle\dfrac{1}{4^{s+s\pr}}\,2^{s-1}\sum_{\delta\in\mu_{2}(F_{S\pr})}\sum_{\varepsilon\in\mu_{2}(F_{S\pr})}\displaystyle\,\prod_{v}\I_{v}(\delta_{v},\varepsilon_{v})
\end{equation}
for which we have introduced the place-holder notation

\begin{equation*}
\begin{array}{l}\vspace{0.05in}
\I_{v}(\delta_{v},\varepsilon_{v})=\displaystyle\int\int_{(\SO(W_{1}^{\perp})_{v}\bk \SO(X)_{v})^{2}} \,(\weil_{v}(h_{2,v}\delta_{v})\phi_{v})(\xi)\,\,(\bar{\weil}_{v}(h_{1,v}\varepsilon_{v})\bar{\phi}_{v})(\xi)\hspace{1in}\\
\hfill\times\,\Gamma_{\xi,v}(\sigma_{v}(h_{2,v}\delta_{v})f_{v},\,\bar{\sigma}_{v}(h_{1,v}\varepsilon_{v})\bar{f}_{v};\chi_{v})\,dh_{1,v}\,dh_{2,v}\,.
\end{array}
\end{equation*}
The $\I_{v}(\delta_{v},\varepsilon_{v})$ are connected to the local integrals of \S \ref{sec_local_calc} by

\begin{equation*}
\alpha(\theta(f_{v},\phi_{v}),\chi_{v})=\dfrac{1}{2}\,\dfrac{\zeta_{F_{v}}(2)\,\zeta_{F_{v}}(4)}{L(1,\sigma_{v},\std)}\,\sum_{\varrho_{v}\in\mu_{2}(F_{v})}\,\I_{v}(\varrho_{v},1)\,,
\end{equation*}
recalling $\alpha(\theta(f_{v},\phi_{v}),\chi_{v})=\alpha(\theta(f_{v},\phi_{v}),\theta(\bar{f}_{v},\bar{\phi}_{v});\chi_{v})$. We now separate the sum in \eqref{eq_placeholder} according to the representation $\sigma_{v}$ at $v$. The index set for the double  summation runs over $\delta,\varepsilon\in\mu_{2}(F_{S\pr})$, with $\delta=(\delta_{v})$ and $\varepsilon=(\varepsilon_{v})$, where $\delta_{v}=\varepsilon_{v}=1$ if $v\in\Set$ or $v\not\in S$.

\begin{itemize}\vspace{0.05in}
\item If $v\not\in S$ then, since $\varrho_{v}\in H(\mathcal{O}_{v})$, $\I_{v}(\varrho_{v},1)=\I_{v}(1,1)$ meaning
\begin{equation*}
\I_{v}(1,1)=\dfrac{1}{2}\,\sum_{\varrho_{v}\in\mu_{2}(F_{v})}\,\I_{v}(\varrho_{v},1) = \dfrac{L(1,\sigma_{v},\std)}{\zeta_{F_{v}}(2)\,\zeta_{F_{v}}(4)}\alpha(\theta(f_{v},\phi_{v}),\chi_{v})\,.
\end{equation*}
\vspace{0.05in}
\item If $v\in S\cap\Set$ then $\I_{v}(\iota,1)=0$. Indeed, for $f=f+0\in\V_{\sigma_{0,v}}\oplus\V_{\sigma_{0,v}^{\iota}}$ we have
\begin{equation*}
\B_{\sigma_{v}}(\sigma_{v}(\iota)f_{v},\bar{f}_{v})=\frac{1}{2}\big(\B_{\sigma_{0,v}}(0,\bar{f}_{v})+\B_{\sigma_{0,v}}(f_{v},0)\big) = 0+0\,.
\end{equation*}
The remaining term is

\begin{equation*}
\I_{v}(1,1) = 2\,\left(\dfrac{L(1,\sigma_{v},\std)}{\zeta_{F_{v}}(2)\,\zeta_{F_{v}}(4)}\right)\,\alpha(\theta(f_{v},\phi_{v}),\chi_{v})\,.
\end{equation*}
\vspace{0.05in}
\item If $v\in S\pr$ we have a four-term summation. Using that $\I_{v}(\iota,\iota)=\I_{v}(1,1)$ we find

\begin{equation*}
\displaystyle\mathop{\sum\sum}_{\delta_{v},\varepsilon_{v}\in\mu_{2}(F_{v})}\I_{v}(\delta_{v},\varepsilon_{v})\,=\,\displaystyle 2\sum_{\varrho_{v}\in\mu_{2}(F_{v})}\I_{v}(\varrho_{v},1)\,= 4\,\left(\dfrac{L(1,\sigma_{v},\std)}{\zeta_{F_{v}}(2)\,\zeta_{F_{v}}(4)}\right)\,\alpha(\theta(f_{v},\phi_{v}),\chi_{v})\,.
\end{equation*}\vspace{0.05in}
\end{itemize}
Together, these three points prove that \eqref{eq_placeholder} becomes

\begin{equation*}
\begin{array}{rcl}\vspace{0.1in}
\abs{\Pe(\theta(f,\phi),\chi)}^{2}&=&\displaystyle\dfrac{1}{4^{s+s\pr}}\,2^{s-1}\,2^{s}\,4^{s\pr}\, \left(\dfrac{L(1,\sigma,\std)}{\zeta_{F}(2)\,\zeta_{F}(4)}\right)\,\prod_{v}\,\alpha(\theta(f_{v},\phi_{v}),\chi_{v})\\
&=&\displaystyle\frac{1}{2}\left(\dfrac{L(1,\sigma,\std)}{\zeta_{F}(2)\,\zeta_{F}(4)}\right)\dfrac{\zeta_{F}(2)\,\zeta_{F}(4)\,L(\pi,\chi,1/2)}{L({\rm Ad},\pi,1)\,L(\chi_{K/F},1)}\,\prod_{v}\,\alpha^{\natural}(\theta(f_{v},\phi_{v}),\chi_{v})\,.
\end{array}
\end{equation*}

Finally, for our formula to be independent of choice of local pairings (see Remark \ref{rem_on_pairing_invariance}) we normalise the Bessel period and instead calculate
\begin{equation}\label{eq_we_should_norm}
\dfrac{\abs{\Pe(\varphi,\chi)}^{2}}{\B_{\theta(\sigma)}(\varphi,\bar{\varphi})\,\B_{\chi}(\chi,\bar{\chi})}
\end{equation}
for $\varphi\in\V_{\theta(\sigma)}$. The Petersson pairing for the one-dimensional representation $\chi$ is trivially constant in this case and is easily seen to equal the Tamagawa number
\begin{equation*}
\B_{\chi}(\chi,\bar{\chi})=\B_{\chi}(1,1)=\Vol(\Ax\Kx\bk\Ax_{K})=2\,.
\end{equation*}
The Petersson pairing for the theta lift $\theta(\sigma)$ is dealt with by the formula of Gan--Ichino \eqref{eq_GI_global_theta_pair} which states that the Petersson inner product for $\theta(\sigma)$ equals

\begin{equation*}
\B_{\theta(\sigma)}=\dfrac{L(1,\sigma,\std)}{\zeta_{F}(2)\,\zeta_{F}(4)}\,\prod_{v}\B_{\theta(\sigma_{v})}\,.
\end{equation*}
Combining these final comments gives the main result.

\begin{theorem}\label{thm_andy_thm1}

Let $(\pi,\V_{\pi})$ be an irreducible, cuspidal automorphic representation of $\PGSpf(\A)$ lifted, via the theta correspondence in \S \ref{sec_theta_section}, from (the Jacquet--Langlands transfer of) a cuspidal automorphic representation of $\GLt(\A_{E})$ with trivial central character. Let $K$ be a quadratic field extension of $F$ such that $\SO_{2}\isom\Kx/\Fx$. Let $\chi$ be a unitary Hecke character of $\Ax_{K}$ such that $\chi\vert_{\Ax}=1$; such a $\chi$ may also be viewed as an automorphic representation of $\SO_{2}(\A)$. For the cusp forms $\varphi=\otimes_{v}\varphi_{v}\in\V_{\pi}$ and $\bar{\varphi}=\otimes_{v}\bar{\varphi}_{v}\in\V_{\bar{\pi}}$ define the local integrals $\alpha^{\natural}(\varphi_{v},\chi_{v})$ as in \S \ref{sec_local}: we have $\alpha^{\natural}(\varphi_{v},\chi_{v})= 1$ for almost all $v$. For any choice of local Haar measures defining $\alpha^{\natural}(\varphi_{v},\chi_{v})$ let $C\in\C$ be the Haar measure constant (the constant of proportionality given by the ratio of the Tamagawa measure divided by the product of local measures). For each $v$, let $\B_{\pi_{v}}$ be any choice of local unitary pairing. We have proved that

\begin{equation*}
\dfrac{\abs{\Pe(\varphi,\chi)}^{2}}{\B_{\pi}(\varphi,\bar{\varphi})\,\B_{\chi}(\chi,\bar{\chi})} =\dfrac{C}{4}\,\dfrac{\zeta_{F}(2)\,\zeta_{F}(4)\,L(1/2,\pi\boxtimes\chi)}{L(1,\pi,\Ad)\,L(1,\chi_{K/F})}\,\prod_{v}\,\dfrac{\alpha^{\natural}(\varphi_{v},\chi_{v})}{\B_{\pi_{v}}(\varphi_{v},\bar{\varphi}_{v})}\,.
\end{equation*}
\end{theorem}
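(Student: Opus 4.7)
The plan is to assemble the global unfolding of the Bessel period (\S \ref{sec_global}) with the local rearrangement of the matrix-coefficient integrals (\S \ref{sec_local_calc}), using Waldspurger's theorem (Theorem \ref{thm_walds}) as the bridge and the Gan--Ichino decomposition \eqref{eq_GI_global_theta_pair} of the Petersson pairing to normalise. Concretely, I would first take $\varphi=\theta(f,\phi)$ for a pure tensor $f \in \V_{\sigma,S}^{1}$ and square the period: using the variant formula \eqref{eq_variant_formula}, $\abs{\Pe(\theta(f,\phi),\chi)}^{2}$ becomes a double integral over $(\mu_{2}(F)\bk\mu_{2}(\A))^{2}$ of a product $\Pe(\theta^{0}(\sigma(\delta)f,\weil(\delta)\phi),\chi)\cdot\overline{\Pe(\theta^{0}(\sigma(\varepsilon)f,\weil(\varepsilon)\phi),\chi)}$. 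Since $[\mu_{2}(\A):\mu_{2}(F)]=2$, this is the same as a normalised finite sum over $\mu_{2}(F_{S})^{2}$, and the support property \eqref{eq_gi_lem2.2} then collapses it to $\mu_{2}(F_{S\pr})^{2}$.

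Next, for each fixed pair $(\delta,\varepsilon)$, I would unfold the summand using \S\ref{sec_unfolding} as an integral over $(\SO(W_{1}^{\perp})\bk\SO(X))(\A)^{2}$ of a Schwartz-function product against $\Lambda_{\xi}(\sigma(h_{2}\delta)f,\chi)\,\overline{\Lambda_{\xi}(\sigma(h_{1}\varepsilon)f,\chi)}$. The crucial step is then Proposition \ref{prop_walds_reform} -- Waldspurger's formula (Theorem \ref{thm_walds}) combined with the identifications in Lemmas \ref{lem_global_factors_id} and \ref{lem_local_factors_id} -- which factorises this product of global $\Lambda_{\xi}$'s as $2^{s-1}$ times a product of local $\Gamma_{\xi,v}$'s. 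Reassembling, a product over $v$ of integrals $\I_{v}(\delta_{v},\varepsilon_{v})$ emerges in which each factor is directly related to $\alpha(\theta(f_{v},\phi_{v}),\chi_{v})$ by the local rearrangement of \S\ref{sec_local_calc}.

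The main delicate point is the bookkeeping of powers of $2$ arising from four places: the index $[\mu_{2}(\A):\mu_{2}(F)]=2$ when passing to a discrete sum; the $2^{s-1}$ in Proposition \ref{prop_walds_reform}; the factor $1/2^{c_{v}}$ hidden in the pairing $\B_{\sigma_{v}}$ when $v\in S\cap\Set$ (from which $\I_{v}(\iota,1)=0$ as well, further halving the number of contributing summands at those places); and the Tamagawa volume $\Vol(\Ax\Kx\bk\Ax_{K})=2=\B_{\chi}(\chi,\bar{\chi})$ that appears once I divide by the Petersson pairings to obtain a basis-independent identity. The case split must be carried out place by place, according to whether $v\not\in S$, $v\in S\cap\Set$, or $v\in S\pr$, so that the constants in front of the resulting $\alpha(\theta(f_{v},\phi_{v}),\chi_{v})$ merge correctly with the normalisation \eqref{eq_nomalise_complicated}. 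After this cancellation and substitution of $\B_{\theta(\sigma)}=\frac{L(1,\sigma,\std)}{\zeta_{F}(2)\,\zeta_{F}(4)}\prod_{v}\B_{\theta(\sigma_{v})}$ from \eqref{eq_GI_global_theta_pair}, one recovers the stated identity with overall constant $1/4$ -- matching the predicted $1/(\abs{\mathcal{S}_{\pi}}\abs{\mathcal{S}_{\chi}})$ from the conjectural Arthur parameters highlighted in the introduction.
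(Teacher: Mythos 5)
Your proposal correctly reproduces the structure of the paper's own proof: the variant theta integral, index-two reduction to a finite sum, collapse via the support lemma, Waldspurger's theorem applied through Proposition \ref{prop_walds_reform}, the place-by-place case split to extract $\alpha^{\natural}$, and the final normalisation via the Gan--Ichino Petersson decomposition and the Tamagawa volume $\B_{\chi}(\chi,\bar{\chi})=2$. This is essentially the same approach as in \S\ref{sec_reform_form}, with the power-of-two bookkeeping identified in the right places.
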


\begin{defn}\label{def_properly_normalised}
We define the local integrals to be \textit{properly} normalised in the following way: choose local unitary pairings $\B_{\chi_{v}}$ on each one-dimensional space $\V_{\chi_{v}}\otimes\V_{\bar{\chi}_{v}}$ such that the Petersson pairing decomposes as $\B_{\chi}=\prod_{v}\B_{\chi_{v}}$. We then take the normalised quantity 
\begin{equation*}
\B_{\chi_{v}}(\chi_{v},\bar{\chi}_{v})\alpha^{\natural}(\varphi_{v},\chi_{v})
\end{equation*}
in place of the local integrals in the formula of Theorem \ref{thm_andy_thm1}. Note that in the original definition of the local integrals (\S \ref{sec_stable}) we implicitly take $\B_{\chi_{v}}=1$ for each $v$, as per \S \ref{sec_pairings}, and we found the decomposition $\B_{\chi}=2\prod_{v}\B_{\chi_{v}}$.
\end{defn}

\begin{cor}
Assuming $C=1$, $\B_{\pi}=\prod_{v}\B_{\pi_{v}}$ and that the local integrals $\alpha^{\natural}(\varphi_{v},\chi_{v})$ are properly normalised (as in Definition \ref{def_properly_normalised}), Theorem \ref{thm_andy_thm1} becomes

\begin{equation*}
\abs{\Pe(\varphi,\chi)}^{2}=\dfrac{1}{4}\, \dfrac{\zeta_{F}(2)\,\zeta_{F}(4)\,L(1/2,\pi\boxtimes\chi)}{L(1,\pi,\Ad)\,L(1,\chi_{K/F})}\,\prod_{v}\alpha^{\natural}(\varphi_{v},\chi_{v})\,.
\end{equation*}
\end{cor}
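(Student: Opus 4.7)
The proof of the corollary is a direct specialisation of Theorem \ref{thm_andy_thm1}, so the plan is simply to substitute the three hypotheses one at a time and track the numerical factors.

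First I would rewrite Theorem \ref{thm_andy_thm1} with $C=1$ and move the two global pairings over, giving
\begin{equation*}
\abs{\Pe(\varphi,\chi)}^{2} \;=\; \B_{\pi}(\varphi,\bar{\varphi})\,\B_{\chi}(\chi,\bar{\chi})\,\dfrac{1}{4}\,\dfrac{\zeta_{F}(2)\,\zeta_{F}(4)\,L(1/2,\pi\boxtimes\chi)}{L(1,\pi,\Ad)\,L(1,\chi_{K/F})}\,\prod_{v}\,\dfrac{\alpha^{\natural}(\varphi_{v},\chi_{v})}{\B_{\pi_{v}}(\varphi_{v},\bar{\varphi}_{v})}\,.
\end{equation*}
Next I would invoke the hypothesis $\B_{\pi}=\prod_{v}\B_{\pi_{v}}$: the ratio $\B_{\pi}(\varphi,\bar\varphi)/\prod_{v}\B_{\pi_{v}}(\varphi_{v},\bar\varphi_{v})$ collapses to $1$, so the local pairings disappear from the denominator and all that is left on the ``pairing side'' is $\B_{\chi}(\chi,\bar{\chi})$.

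The only remaining step is to account for $\B_{\chi}(\chi,\bar{\chi})$ under the proper-normalisation convention. By the computation already recorded at the end of \S \ref{sec_pairings}, the Petersson pairing for the one-dimensional representation $\chi$ equals the Tamagawa number $\Vol(\A^{\times}K^{\times}\bk\A^{\times}_{K})=2$. On the other hand, Definition \ref{def_properly_normalised} instructs us to replace $\alpha^{\natural}(\varphi_{v},\chi_{v})$ by $\B_{\chi_{v}}(\chi_{v},\bar{\chi}_{v})\,\alpha^{\natural}(\varphi_{v},\chi_{v})$ where the $\B_{\chi_{v}}$ are chosen so that $\B_{\chi}=\prod_{v}\B_{\chi_{v}}$ (rather than the ``implicit'' normalisation $\B_{\chi_{v}}=1$ from \S \ref{sec_pairings}, which gave $\B_{\chi}=2\prod_{v}\B_{\chi_{v}}$). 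Writing out the product of the properly normalised factors and comparing the two conventions, one sees that
\begin{equation*}
\prod_{v}\,\alpha^{\natural,\,\textrm{proper}}(\varphi_{v},\chi_{v})\;=\;\B_{\chi}(\chi,\bar{\chi})\,\prod_{v}\alpha^{\natural}(\varphi_{v},\chi_{v})\;=\;2\,\prod_{v}\alpha^{\natural}(\varphi_{v},\chi_{v}),
\end{equation*}
so the factor $\B_{\chi}(\chi,\bar\chi)=2$ in the displayed equation above is absorbed into the local integrals, \emph{but} at the cost of an extra factor of $1/2$ when we re-express the right-hand side in terms of the properly normalised $\alpha^{\natural,\,\textrm{proper}}$. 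Combining $\tfrac{1}{4}\cdot 2\cdot \tfrac{1}{2}=\tfrac{1}{4}$ yields the constant in the statement.

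There is no real obstacle here: the only thing to be careful about is the bookkeeping of the two places where a ``$2$'' might appear, namely the Tamagawa number $\Vol(\A^{\times}K^{\times}\bk\A^{\times}_{K})=2$ and the renormalisation of $\prod_{v}\B_{\chi_{v}}$. These two factors conspire exactly so that the $\tfrac{1}{4}$ of Theorem \ref{thm_andy_thm1} persists, rather than collapsing to $\tfrac{1}{2}$ or $\tfrac{1}{8}$; this is also precisely the constant $\tfrac{1}{\abs{\mathcal{S}_{\pi}}\abs{\mathcal{S}_{\chi}}}=\tfrac{1}{2\cdot 2}$ predicted by the Arthur-parameter analysis discussed in the introduction, which provides a useful sanity check.
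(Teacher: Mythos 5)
Your proof is correct and follows exactly the intended route: substitute $C=1$ and $\B_{\pi}=\prod_{v}\B_{\pi_{v}}$ into Theorem \ref{thm_andy_thm1}, then observe that the Tamagawa number $\B_{\chi}(\chi,\bar{\chi})=2$ and the factor $\prod_{v}\B_{\chi_{v}}(\chi_{v},\bar{\chi}_{v})=2$ introduced by proper normalisation cancel exactly, leaving the constant $1/4$. Your bookkeeping $\tfrac{1}{4}\cdot 2\cdot\tfrac{1}{2}=\tfrac{1}{4}$ is precisely the calculation the paper leaves implicit, and the closing sanity check against $\tfrac{1}{\abs{\mathcal{S}_{\pi}}\abs{\mathcal{S}_{\chi}}}$ is the right one.
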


\begin{rem}\label{rem_on_pairing_invariance}
In a more general setting, the representation $\chi$ need not be one-dimensional (when considering other groups). Normalising the left-hand-side of the equation in Theorem \ref{thm_andy_thm1} by the Petersson pairings for $\pi$ and $\chi$, and including the Haar measure constant, ensures that the local choices of pairings and measures are independent of the global setting. These objects may be chosen and may be chosen arbitrarily without affecting the formula and, in particular, the local integrals are independent of such choices (see \cite[Remark 1.3]{ichino_trilinear}).

Our normalisations may seem ad hoc at first, due to the trivial pairings on $\chi$, however we state our theorem in this way so that it sits in the more general framework of Liu's conjecture. In Liu's work one sees that the issue of normalisation appears in a natural setting and we invite the reader to check \cite[Conjecture 2.5]{liu} for consolidation.
\end{rem}

\addtocontents{toc}{\protect\setcounter{tocdepth}{-1}} 
\bibliographystyle{amsplain}			
\bibliography{bessel-yoshida}{}		
\end{document}